\PassOptionsToPackage{dvipsnames,table}{xcolor}
\documentclass[final,onefignum,onetabnum,pagebackref]{siamart251216}
\usepackage{graphicx} 
\usepackage{algorithm, algpseudocode, algorithmicx}
\usepackage{amssymb}
\usepackage{booktabs}
\usepackage{enumitem}
\usepackage{float}
\usepackage{mathtools}
\usepackage{placeins}
\usepackage{nicefrac}       
\usepackage{xspace}
\usepackage{url}
\usepackage{stmaryrd}
\usepackage{cite}
\usepackage{subfig}

\usepackage[most]{tcolorbox}
\newcommand{\actionbox}[1]{\begin{tcolorbox}[colback=white,colframe=black,width=\columnwidth,boxsep=5pt,arc=4pt]
    \centering \emph{#1}
\end{tcolorbox}}

\let\norm\relax
\DeclarePairedDelimiter\norm{\lVert}{\rVert}
\DeclareMathOperator{\err}{err}

\newcommand{\f}[2]{\frac{#1}{#2}}
\newcommand{\fl}[1]{\mathrm{fl}#1}
\newcommand{\m}[1]{\mathbb{#1}}
\renewcommand{\b}[1]{\boldsymbol{#1}}
\newcommand{\wb}[1]{\widehat{\boldsymbol{#1}}}
\newcommand{\tb}[1]{\tilde{\boldsymbol{#1}}}
\newcommand\numberthis{\addtocounter{equation}{1}\tag{\theequation}}
\newcommand{\warn}[1]{\emph{#1}}
\newcommand{\order}{\mathcal{O}}
\newcommand{\mat}[1]{\b{#1}}
\renewcommand{\vec}[1]{\b{#1}}
\renewcommand{\top}{\intercal}

\renewcommand{\eqref}{\cref}
\renewcommand{\tilde}{\widetilde}
\renewcommand{\hat}{\widehat}

\newsiamthm{fact}{Fact}
\newsiamthm{conjecture}{Conjecture}
\newsiamthm{remark}{Remark}
\newsiamthm{assumption}{Assumption}
\newsiamthm{informaltheorem}{Informal Theorem}
\crefname{section}{Section}{Sections}
\crefname{subsection}{Section}{Sections}
\crefname{subsubsection}{Section}{Sections}

\renewcommand*{\backref}[1]{}
\renewcommand*{\backrefalt}[4]{%
  \ifcase #1 %
  (No citations.)
  \or
  (Cited p.~#2.)
  \else
  (Cited pp.~#2.)
  \fi
}

\title{Numerical instabilities in the Kaczmarz method \\ and stabilization by iterative refinement\thanks{Date: \today.
\funding{ENE acknowledges support from the Miller Institute for Basic Research in Science, University of California Berkeley. MD was supported in part by NSF CAREER Grant CCF-233865 and a Google ML and Systems Junior Faculty Award. DN and AX were partially supported by NSF DMS 2408912. This work was done in part while MD and ENE were visiting the Simons Institute for the Theory of Computing.}}}

\author{Micha\L\ Derezi\'nski\thanks{Division of Computer Science and Engineering, University of Michigan  (\email{derezin@umich.edu}).}
\and Ethan N.\ Epperly\thanks{Department of Mathematics, University of California, Berkeley (\email{eepperly@berkeley.edu}).}
\and Deanna Needell\thanks{Department of Mathematics, University of California, Los Angeles (\email{deanna@math.ucla.edu, alexxue@g.ucla.edu}).}
\and Alexander Xue\footnotemark[4]
}

\begin{document} 
\maketitle

\begin{abstract}
The randomized Kaczmarz method and its accelerated variants are a powerful class of iterative methods for solving large-scale linear systems, offering guaranteed convergence with low per-iteration cost.
However, their numerical stability remains poorly understood.
In this work, we investigate the stability properties of both classical and accelerated randomized Kaczmarz methods, with an emphasis on how error propagates across iterations and interacts with acceleration.
We show that both classical and accelerated randomized Kaczmarz fail to be forward stable.
To address this issue, we propose the integration of iterative refinement into randomized Kaczmarz frameworks.
We demonstrate that refinement can effectively control error accumulation and recover high-accuracy solutions, even when the system is ill-conditioned. 
Numerical experiments corroborate our theoretical findings and illustrate the practical benefits of combining refinement with both classical and accelerated Kaczmarz methods.
\end{abstract}

\begin{keywords}
Randomized Kaczmarz, iterative solver, numerical stability, iterative refinement.
\end{keywords}

\begin{MSCcodes}
65F10, 65F35, 65F20, 65G50, 68W20
\end{MSCcodes}

\section{Introduction}

The randomized Kaczmarz method \cite{strohmer2007randomizedkaczmarzalgorithmexponential} is among the most widely studied numerical algorithms in the twenty-first century.
The method iteratively solves a system of linear equations $\mat{A}\vec{x} = \vec{b}$ by randomly selecting a single equation and projecting the current iterate onto them:
\begin{equation} \label{eq:kaczmarz}
    \vec{x}_{k+1} = \vec{x}_k + \frac{b_{r(k)} - \langle \vec{a}_{r(k)} , \vec{x}_k \rangle}{\langle \vec{a}_{r(k)} , \vec{a}_{r(k)} \rangle} \vec{a}_{r(k)} \quad \text{for } k = 0,1,2,\ldots.
\end{equation}
Here, $b_i$ and $\vec{a}_i^\top$ denote the $i$th entry or row of $\vec{b}$ or $\mat{A}$, and 
\begin{equation} \label{eq:random-row}
    r(k) = i \quad \text{with probability } \norm{\vec{a}_i}^2 / \norm{\mat{A}}_{\rm F}^2
\end{equation}
is a randomly selected index.
Throughout this article, $\norm{\cdot}$ denotes the vector $\ell_2$ norm and its induced operator norm, and $\norm{\cdot}_{\rm F}$ denotes the Frobenius norm.
The Kaczmarz iteration \cref{eq:kaczmarz} dates back to the work of Stefan Kaczmarz in 1937 \cite{Kac37} and received significant attention in image reconstruction in the 1970s \cite{GBH70}.
Randomized Kaczmarz can be interpreted as a weighted version of stochastic gradient descent with a specific choice of step size \cite{NSW16}.
As such, the solution of linear systems by randomized Kaczmarz has received attention not only for its efficiency in solving systems, but also as a model problem for studying phenomena associated with stochastic gradient methods such as implicit regularization \cite{MNR15}, the effect of the minibatch size \cite{BCW25}, the interpolation regime \cite{derezinski2026last}, and catastrophic forgetting \cite{EMW+22}.

\subsection{Contributions}

Despite the significant interest in (randomized) Kaczmarz methods and the even wider interest in stochastic gradient methods, the behavior of randomized Kaczmarz in finite-precision arithmetic appears to have avoided careful study.
We ask:
\actionbox{Is randomized Kaczmarz numerically stable? That is, if run to convergence, does it produce results of comparable accuracy to standard direct methods?}

\subsubsection{First result: Instability of randomized Kaczmarz}
Our first contribution is to show that, perhaps surprisingly, the answer to this question is no: the randomized Kaczmarz algorithm is not numerically stable in general.
Even when applied to a consistent system (i.e., one with $\vec{b} \in \operatorname{range}(\mat{A})$), we demonstrate that the method can stagnate far above the level of accuracy achieved by a direct solve when the problem is ill-conditioned.
This is demonstrated in \cref{fig:exp_1e4}, which shows an example in single precision (unit roundoff $u\approx 10^{-8}$) where standard randomized Kaczmarz (blue curve) stagnates at a relative forward error $\norm{\vec{x} - \hat{\vec{x}}} / \norm{\vec{x}}$ that is orders of magnitude worse than MATLAB's ``\texttt{A\textbackslash{}b}'' direct solver (the horizontal line).
The model problem is randomly generated and has a Demmel condition number
\begin{equation} \label{eq:demmel}
    \tilde{\kappa}(\mat{A}) = \norm{\mat{A}}_{\rm F} \norm{\mat{A}^{-1}}
\end{equation}
of $\tilde \kappa (\mat A) = 10^4$.
Since the forward error of randomized Kaczmarz is much larger than the forward error of a direct method, we conclude that randomized Kaczmarz is not forward stable.
\Cref{sec:notation} describes the notion of forward stability more precisely, and \cref{sec:experiment} provides details on the setup and other experiments.

\begin{figure}
    \centering
    \subfloat[$\tilde \kappa = 10^4$]
    {\includegraphics[width=0.48\linewidth]{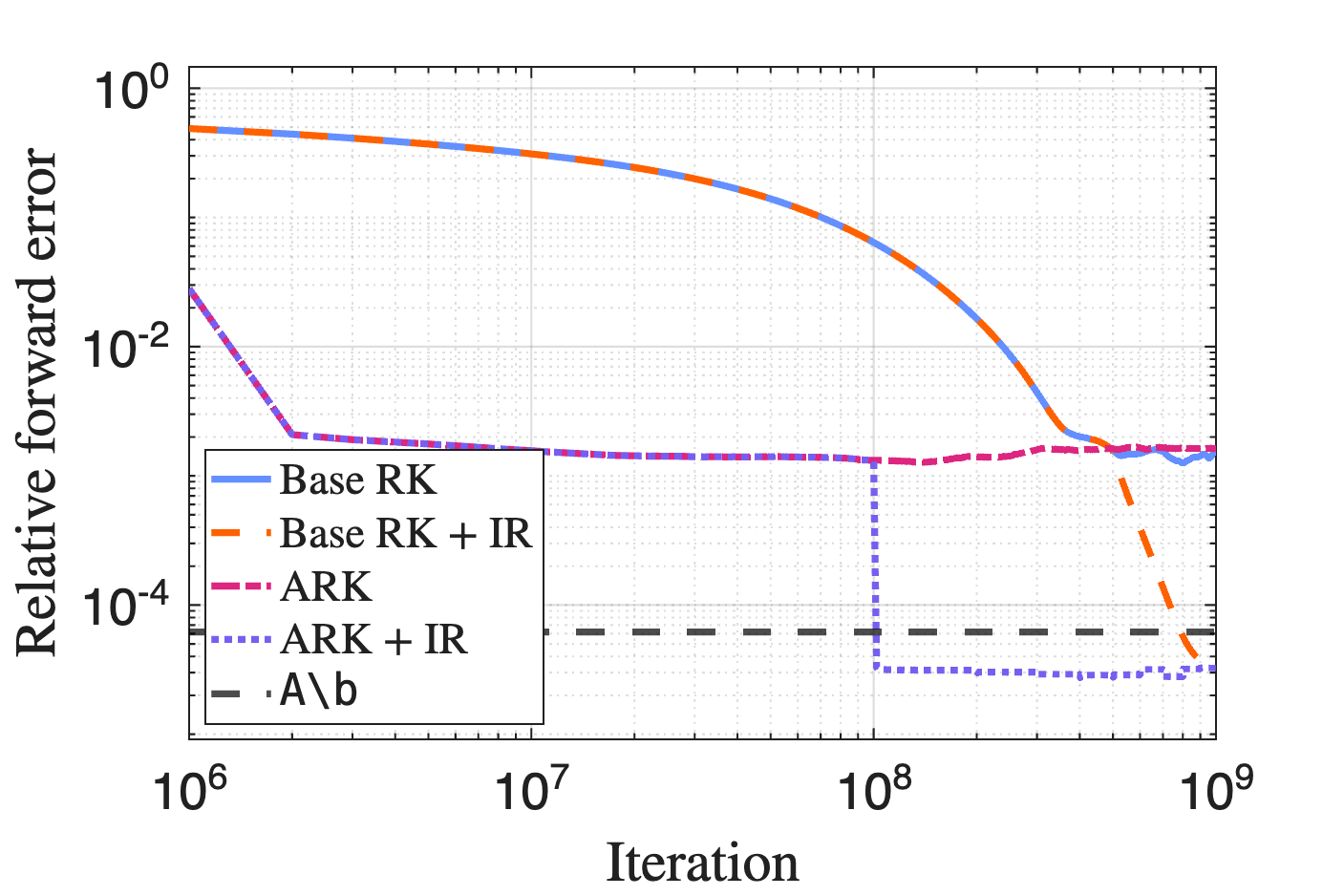}
    \label{fig:exp_1e4}}
    \subfloat[$\tilde \kappa = 10^5$]
    {\includegraphics[width=0.48\linewidth]{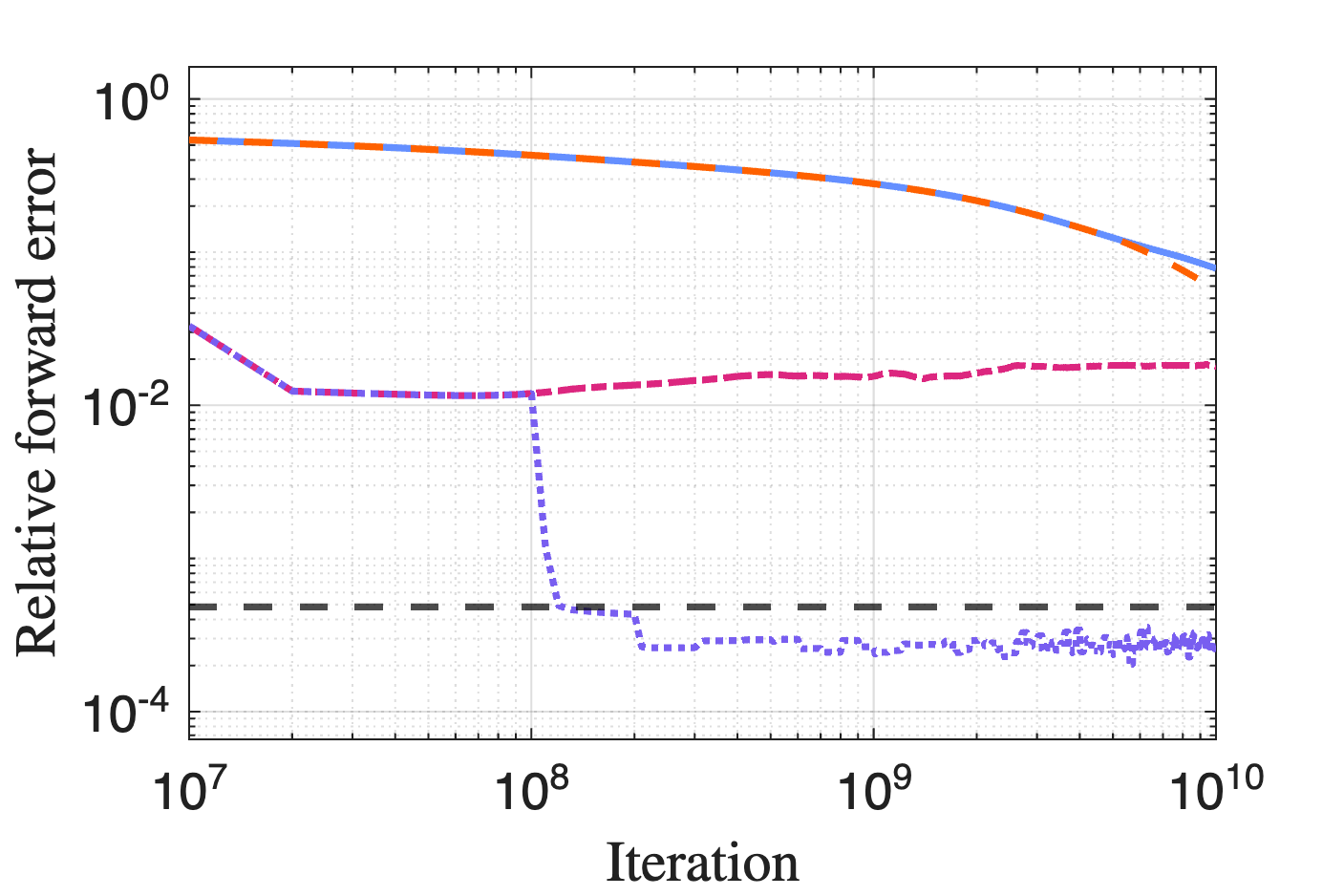}
    \label{fig:exp_1e5}}
    \caption{Error versus iteration for randomized Kaczmarz (blue), randomized Kaczmarz with iterative refinement (orange), accelerated randomized Kaczmarz (ARK, pink), and ARK with iterative refinement (purple) on a randomly generated consistent system with exponentially distributed singular values of size $n=500$.
    The Demmel condition number is $\tilde \kappa(\mat{A}) = 10^4$ in (a) and $\tilde \kappa(\mat{A}) = 10^5$ in (b). The horizontal dashed line is the error of MATLAB's ``\texttt{A\textbackslash{}b}'' direct solver.}
    \label{fig:exp}
\end{figure}

If it is not always forward stable, how accurate is the randomized Kaczmarz method?
We answer this question by proving the following theorem: 

\begin{theorem} [Suboptimal stability of randomized Kaczmarz] \label{thm:base_kz}
Let $\mat A \in \m R^{n \times n}$ be an invertible matrix, let $\vec{b} \in \m R^{n}$ be a vector, and let $\vec{x} \coloneqq \mat{A}^{-1}\vec{b}$.
Let $\widehat{\b x}_{k}$ be the output of $k$ iterations of randomized Kaczmarz on initial input $\widehat{\b x}_{0}$ executed in floating-point arithmetic with precision $u$, and assume $ \norm{\wb x_0} \le \norm {\b x}$.
Finally, assume that $\tilde\kappa(\mat{A})^2 u \ll 1$. 
Then it holds that
$$\m E \| \widehat{\b x}_k - \b x\| \le  \left(1 - \f {1} {2\tilde{\kappa}(\mat{A})^{2}}\right) ^k \norm{\wb x_0 - \b x} + C \| \b x\| \cdot \tilde{\kappa}(\mat{A})^2 u$$
for some polynomial $C = C(n)$.
In particular, if we run for $k = \order(\tilde{\kappa}(\mat{A})^2 \log(1/u))$ total iterations, then the output satisfies
\begin{equation*}
    \m E \| \widehat{\b x}_k - \b x\| \lesssim \norm{\vec{x}} \cdot  \tilde{\kappa}(\mat{A})^2u.
\end{equation*}
\end{theorem}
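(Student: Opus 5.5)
We write one floating-point step as an exact Kaczmarz step plus a local perturbation, then unroll the resulting linear recursion in expectation. Let $\mat P_i = \mat I - \vec a_i\vec a_i^\top/\norm{\vec a_i}^2$ and $\vec e_k = \wb x_k - \vec x$. Since $\vec x$ satisfies every equation, the exact update gives $\vec e_{k+1} = \mat P_{r(k)}\vec e_k$. In floating point we write
\[
\wb x_{k+1} = \wb x_k + \frac{b_{r} - \langle \vec a_{r},\wb x_k\rangle}{\norm{\vec a_r}^2}\vec a_r + \vec\delta_k,
\]
with $r = r(k)$, so that $\vec e_{k+1} = \mat P_{r}\vec e_k + \vec\delta_k$. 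The standard model $\fl(a\circ b)=(a\circ b)(1+\epsilon)$, $|\epsilon|\le u$, gives the following error sources.
\begin{itemize}
\item The inner product $\langle\vec a_r,\wb x_k\rangle$ has error at most $\gamma_n\norm{\vec a_r}\norm{\wb x_k}$.
\item The residual subtraction contributes $u(|b_r|+|\langle\vec a_r,\wb x_k\rangle|)$, which is at most $u\norm{\vec a_r}(\norm{\vec x}+\norm{\wb x_k})$, using $|b_r|\le\norm{\vec a_r}\norm{\vec x}$.
\item The division, scaling and final addition contribute $u$ times the size of the step plus $u\norm{\wb x_k}$.
\end{itemize}
Dividing by $\norm{\vec a_r}^2$ and multiplying by $\vec a_r$ removes one factor of $\norm{\vec a_r}$. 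Altogether this yields the key local bound
\[
\norm{\vec\delta_k} \le c(n)\,u\,(\norm{\vec x} + \norm{\wb x_k}) \le c(n)\,u\,(2\norm{\vec x} + \norm{\vec e_k}).
\]
This bound holds deterministically, for every row choice.

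Next we take expectations. The classical Strohmer--Vershynin estimate is $\m E_r\norm{\mat P_r\vec e}^2 \le (1-\tilde\kappa^{-2})\norm{\vec e}^2$. By Jensen, $\m E_r\norm{\mat P_r\vec e}\le(1-\tilde\kappa^{-2})^{1/2}\norm{\vec e}\le(1-\tfrac{1}{2\tilde\kappa^{2}})\norm{\vec e}$. Conditioning on $\wb x_k$, using the triangle inequality and the local bound gives
\[
\m E\norm{\vec e_{k+1}} \le \Bigl(1-\frac{1}{2\tilde\kappa^2} + c(n)u\Bigr)\m E\norm{\vec e_k} + 2c(n)u\norm{\vec x}.
\]
The hypothesis $\tilde\kappa^2u\ll1$ lets us absorb $c(n)u$ into the contraction, for instance as $1-\tfrac{1}{2\tilde\kappa^2}+c(n)u\le 1-\tfrac{1}{4\tilde\kappa^2}$. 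To match the stated constant exactly, one can run the Jensen step with slack, since $(1-\tilde\kappa^{-2})^{1/2}\le 1-\tfrac{1}{2\tilde\kappa^{2}}-\tfrac{1}{8\tilde\kappa^{4}}$ leaves a little room. Alternatively one accepts a constant factor change in the rate, which does not affect the iteration count $\order(\tilde\kappa^2\log(1/u))$.

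Unrolling the recursion and summing the geometric series gives
\[
\m E\norm{\vec e_k} \le \rho^k\norm{\vec e_0} + \frac{2c(n)u\norm{\vec x}}{1-\rho},
\]
where $1-\rho\gtrsim\tilde\kappa^{-2}$. The second term is $C(n)\norm{\vec x}\tilde\kappa^2u$, as claimed. For the corollary, $\norm{\wb x_0}\le\norm{\vec x}$ gives $\norm{\vec e_0}\le2\norm{\vec x}$. With $k=\order(\tilde\kappa^2\log(1/u))$ the first term is then at most $u\norm{\vec x}$, which is dominated by the second.

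The main obstacle is the local error bound. The danger is that an error proportional to the step size $|b_r-\langle\vec a_r,\wb x_k\rangle|/\norm{\vec a_r}$ could fail to be controlled by $\norm{\vec x}$. In fact it is controlled, because the step size is at most $\norm{\vec e_k}$. A second subtlety is that the rounding errors depend on the randomness. This does not matter here, because the bound on $\vec\delta_k$ is deterministic given $\wb x_k$, and only $\m E_r$ of the projected term uses the sampling distribution. If the sampling itself is done in floating point, the probabilities are perturbed by $O(u)$ relative error. This changes the contraction factor by $O(u)$, which is absorbed in the same way.
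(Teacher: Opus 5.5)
Your skeleton matches the paper's: the pathwise local bound of \cref{lem:base_kz_error}, conditional Jensen applied to the Strohmer--Vershynin estimate, and unrolling a linear recursion. The gap is the step where you absorb $c(n)u$ into the contraction, which does not give the stated rate. The slack you point to is only $\frac{1}{8\tilde\kappa^4}$. So $\sqrt{1-\tilde\kappa^{-2}}+c(n)u\le 1-\frac{1}{2\tilde\kappa^2}$ would require $c(n)u\le\frac{1}{8\tilde\kappa^4}$, i.e.\ $\tilde\kappa^4u\lesssim 1$. That does not follow from $\tilde\kappa^2u\ll 1$: if $\tilde\kappa^2u=1/C(n)$, then $\tilde\kappa^4u=\tilde\kappa^2/C(n)$ can be enormous. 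Your fallback rate $1-\frac{1}{4\tilde\kappa^2}$ suffices for the ``in particular'' clause, but it does not imply the displayed inequality. For $k\approx\tilde\kappa^2$ the two geometric factors differ by a constant fraction of $\norm{\wb x_0-\vec x}$, and that cannot be absorbed into $C\norm{\vec x}\tilde\kappa^2u$.

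The fix is the paper's device. Keep the contraction factor exactly $\sqrt{1-\tilde\kappa^{-2}}\le 1-\frac{1}{2\tilde\kappa^2}$, and make the rounding term purely additive via an a priori bound on $\m E\norm{\wb x_k}$. Your own recursion supplies that bound. Run it once with $\rho\le 1-\frac{1}{4\tilde\kappa^2}$; using $\norm{\vec e_0}\le 2\norm{\vec x}$ and $8c(n)\tilde\kappa^2u\le 1$, this gives $\m E\norm{\vec e_k}\le\norm{\vec e_0}+8c(n)\tilde\kappa^2u\norm{\vec x}\le 3\norm{\vec x}$. Hence $\m E\norm{\wb x_k}\le 4\norm{\vec x}$ for all $k$. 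Now redo the one-step estimate with $\m E\norm{\vec\delta_k}\le c(n)u\bigl(\norm{\vec x}+\m E\norm{\wb x_k}\bigr)\le 5c(n)u\norm{\vec x}$ and the untouched contraction. Unrolling gives $\m E\norm{\vec e_k}\le\bigl(1-\frac{1}{2\tilde\kappa^2}\bigr)^k\norm{\vec e_0}+10c(n)\tilde\kappa^2u\norm{\vec x}$, which is exactly the claim. This two-pass argument is slightly more careful than the paper's text. The paper's inductive hypothesis $\m E\norm{\wb x_k}\le\norm{\vec x}$ cannot literally be re-verified, since one only gets $\norm{\vec x}+\m E\norm{\vec e_k}$. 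Any fixed multiple of $\norm{\vec x}$ suffices, and your first pass provides one.
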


The symbol $\ll$ has a precise meaning in this context, as detailed in \cref{sec:notation}.
We remind the reader that the unit roundoff $u$ is a measure of the size of rounding errors.
It is a property of a system of floating point numbers.
In double precision, we have $u\approx 10^{-16}$, and in single precision, we have $u\approx 10^{-8}$.

\Cref{thm:base_kz} suggests that the error achieved by randomized Kaczmarz is roughly $\tilde{\kappa}(\mat{A})^2 u$.
In contrast, standard direct methods achieve an error of roughly $\tilde{\kappa}(\mat{A})u$.
The larger $\tilde{\kappa}(\mat{A})^2 u$ error of randomized Kaczmarz has a simple explanation:
Each step of the Kaczmarz algorithm incurs an error of size $\order(u)$, and $k$ steps of randomized Kaczmarz damps errors by a factor of roughly $(1 - 2/\tilde{\kappa}(\mat{A})^2)^k$.
Therefore, the total error is roughly
\begin{equation*}
    \sum_{t=0}^\infty \order(u) \cdot (1 - 2/\tilde{\kappa}(\mat{A})^2)^k = \order(\tilde{\kappa}(\mat{A})^2u).
\end{equation*}
This informal argument mirrors our formal proof of \cref{thm:base_kz} in \cref{sec:kz}.

\subsubsection{Second result: Stabilization by iterative refinement}

When the matrix $\mat{A}$ is not too ill-conditioned, randomized Kaczmarz can be stabilized in a straightforward way by combining the method with \emph{iterative refinement}.
The strategy is as follows.
First, run the randomized Kaczmarz method on $\mat{A}\vec{x} = \vec{b}$ until it reaches its maximum achievable accuracy, resulting in an approximate solution $\wb{x}$.
Then, apply randomized Kaczmarz to the residual equation $\mat{A}\vec{e} = \vec{r}$ with $\vec{r} = \vec{b} - \mat{A}\wb{x}$, obtaining an approximate solution $\wb{e}$.
Finally, output $\wb{x} + \wb{e}$ as a numerical approximation to $\vec{x}$.
We emphasize that this entire procedure is executed in a uniform numerical precision (i.e., we do not use higher-precision arithmetic to evaluate the residual $\vec{r}$).
For difficult problems, more than one step of this process may be necessary.

Our second main result is that randomized Kaczmarz with iterative refinement is forward stable:
\begin{theorem}[Stability of randomized Kaczmarz with iterative refinement] \label{thm:base_kz_ir}
Let $\mat A \in \m R^{n \times n}$ be an invertible matrix, let $\b b \in \m R^{n}$ be a vector, and let $\b x = \mat{A}^{-1} \b b.$
Suppose that we perform iterative refinement $t=\log_2(\f 1 {\tilde{\kappa}(\mat{A}) u})$ times, where each refinement executes $\order(\tilde\kappa(\mat{A})^2)$ iterations of randomized Kaczmarz. If $\tilde \kappa(\mat{A})^2 u \ll 1$, then this scheme attains forward stability:
    $$\m E \norm{\wb x_t - \b x } \lesssim \norm{\b x} \cdot \tilde\kappa(\mat{A}) u.$$
   
\end{theorem}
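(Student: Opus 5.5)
We prove \cref{thm:base_kz_ir} by tracking the error $\vec{e}_j \coloneqq \wb{x}_j - \vec{x}$ across refinement steps and showing it contracts by a factor of two per step, down to the floor $\tilde\kappa(\mat{A})u\norm{\vec{x}}$. Fix a refinement step $j$ and write its three floating-point operations:
\begin{enumerate}[label=(\roman*)]
\item computing the residual $\wb{r}_j = \fl{(\vec{b} - \mat{A}\wb{x}_j)}$;
\item running $\order(\tilde\kappa(\mat{A})^2)$ Kaczmarz iterations on $\mat{A}\vec{e} = \wb{r}_j$ from the initial guess $\vec{0}$, producing $\wb{e}_j$;
\item updating $\wb{x}_{j+1} = \fl{(\wb{x}_j + \wb{e}_j)}$.
\end{enumerate}

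\textbf{Residual step.} Standard matrix--vector error analysis gives $\wb{r}_j = \vec{b} - \mat{A}\wb{x}_j + \vec{f}_j$, where
\[
\norm{\vec{f}_j} \lesssim u\left(\norm{\vec{b}} + \norm{\mat{A}}_{\rm F}\norm{\wb{x}_j}\right) \lesssim u\,\norm{\mat{A}}_{\rm F}\left(\norm{\vec{x}} + \norm{\vec{e}_j}\right).
\]
The exact solution of the perturbed residual equation is therefore $\vec{e}^\star_j = -\vec{e}_j + \mat{A}^{-1}\vec{f}_j$, and
\[
\norm{\mat{A}^{-1}\vec{f}_j} \lesssim \tilde\kappa(\mat{A})\,u\left(\norm{\vec{x}} + \norm{\vec{e}_j}\right).
\]
This term is the source of the optimal $\tilde\kappa(\mat{A}) u$ floor.

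\textbf{Inner solve.} Apply \cref{thm:base_kz} to the system $\mat{A}\vec{e} = \wb{r}_j$ with initial guess $\vec{0}$, so the hypothesis $\norm{\wb{x}_0}\le\norm{\vec{e}^\star_j}$ holds trivially. This gives
\[
\m E\norm{\wb{e}_j - \vec{e}^\star_j} \le \left(\left(1 - \tfrac{1}{2\tilde\kappa(\mat{A})^2}\right)^k + C\,\tilde\kappa(\mat{A})^2 u\right)\norm{\vec{e}^\star_j}.
\]
Choosing $k = c\,\tilde\kappa(\mat{A})^2$ with $c$ a large constant makes the first term at most $1/8$. The hypothesis $\tilde\kappa(\mat{A})^2u \ll 1$ makes $C\tilde\kappa(\mat{A})^2 u \le 1/8$, where we read $\ll$ as absorbing the polynomial factor $C(n)$. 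The crucial point is that the inner solve has \emph{relative} accuracy $1/4$ with respect to the size of the correction, not with respect to $\norm{\vec{x}}$.

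\textbf{Error recursion.} The update step contributes error $\lesssim u(\norm{\vec{x}} + \norm{\vec{e}_j})$. Combining the three steps,
\[
\vec{e}_{j+1} = \vec{e}_j + \wb{e}_j + \order(u)\left(\norm{\vec{x}} + \norm{\vec{e}_j}\right) = \left(\wb{e}_j - \vec{e}^\star_j\right) + \mat{A}^{-1}\vec{f}_j + \order(u)\left(\norm{\vec{x}} + \norm{\vec{e}_j}\right).
\]
Take expectation conditionally on $\wb{x}_j$, using independence of the fresh Kaczmarz randomness, then the tower property. This yields
\[
\m E\norm{\vec{e}_{j+1}} \le \left(\tfrac14 + \order(\tilde\kappa(\mat{A}) u)\right)\m E\norm{\vec{e}_j} + C'\,\tilde\kappa(\mat{A})\, u\,\norm{\vec{x}} \le \tfrac12\,\m E\norm{\vec{e}_j} + C'\,\tilde\kappa(\mat{A})\, u\,\norm{\vec{x}}.
\]
Iterating from $\m E\norm{\vec{e}_0} \le \norm{\vec{x}}$ (taking $\wb{x}_0 = \vec{0}$, or the output of a first Kaczmarz run) gives
\[
\m E\norm{\vec{e}_t} \le 2^{-t}\norm{\vec{x}} + 2C'\,\tilde\kappa(\mat{A})\, u\,\norm{\vec{x}}.
\]
With $t = \log_2\!\big(1/(\tilde\kappa(\mat{A})u)\big)$ both terms are $\lesssim \tilde\kappa(\mat{A})u\,\norm{\vec{x}}$, which is the claim.

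\textbf{Main obstacle.} The delicate step is the inner solve. \Cref{thm:base_kz} must be applied with a right-hand side that is itself random and perturbed, and we must verify that its rounding floor scales with $\norm{\vec{e}^\star_j}$, the size of the correction, rather than with $\norm{\vec{x}}$. Inspecting that theorem, its floor is $\tilde\kappa(\mat{A})^2 u$ times the norm of the solution of the system actually being solved, which here is $\vec{e}^\star_j$, so this should go through. Some care is still needed to handle expectations of norms of random quantities, for which conditioning on $\wb{x}_j$ suffices.
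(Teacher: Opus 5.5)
Your proposal is correct and is essentially the paper's argument. The paper also applies \cref{thm:base_kz} with zero initial guess, noting (as you do) that its error floor scales with the norm of the correction rather than with $\norm{\vec x}$. This yields a relative accuracy $q = (1-\tfrac{1}{2\tilde\kappa(\mat A)^2})^{k} + C_1\tilde\kappa(\mat A)^2u$, and the paper chooses $k = \order(\tilde\kappa(\mat A)^2)$ so that $(1+C_2u)q<1/2$. It then invokes the black-box result \cref{thm:refinement_exp}, whose proof is the same residual/inner-solve/update recursion you write out inline. The only minor difference is that you keep the $\norm{\vec e_j}$ dependence linear and absorb it into the contraction factor, whereas the paper uses an inductive a priori bound $\m E\norm{\wb x_j} \le 2\norm{\vec x}$.
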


Our analysis is generic and guarantees convergence with iterative refinement for a general solver subroutine that reduces the error by a constant factor, in expectation.
This general result appears in \cref{sec:ir}.
Note that this stability result is conditional on the assumption $\tilde \kappa(\mat{A})^2 u \ll 1$.
This requirement is sensible as, according to \cref{thm:base_kz}, if $\tilde \kappa(\mat{A})^2 u$ is not small, then we cannot guarantee that the initial execution of randomized Kaczmarz reduces the error even by a constant factor.

The fact that iterative refinement stabilizes randomized Kaczmarz is perhaps no surprise---iterative refinement is a classical strategy in numerical analysis that is known for upgrading weakly stable algorithms into more strongly stable ones.
What is striking, however, is just how little it takes here. 
In exact arithmetic, running one step of iterative refinement where the first solve is $k_1$ steps of randomized Kaczmarz and the refinement step is $k_2$ of randomized Kaczmarz is equivalent to simply running $k_1+k_2$ steps of randomized Kaczmarz.
This observation is proven in \cref{lem:ir_kz_exact}.
If the methods are the same in exact arithmetic, how does iterative refinement help?
The difference is bookkeeping: iterative refinement computes the residual $\vec{r} = \vec{b} - \mat{A}\wb{x}$ and tracks updates in a separate vector ``$\vec{e}$" rather than updating ``$\vec{x}$" directly.
Computing the residual $\vec{b} - \mat{A}\wb{x}$ and representing the solution as sum $\vec{x} + \vec{e}$ of two vectors is enough to transform a not-fully stable method into a forward stable one. 


\subsubsection{Third result: Acceleration}

The analysis of Strohmer and Vershynin \cite{strohmer2007randomizedkaczmarzalgorithmexponential} shows that the basic randomized Kaczmarz iteration converges slowly when the matrix is ill-conditioned.
Indeed, in \cref{fig:exp_1e4}, it takes roughly one billion iterations for Kaczmarz to converge to the maximum attainable accuracy, and standard Kaczmarz fails to converge to the maximum attainable accuracy in \cref{fig:exp_1e5} even after ten billion iterations.
To speed up convergence, we can turn to accelerated versions of the Kaczmarz procedure.
Within the context of this paper, it is natural to ask:
\actionbox{Does acceleration help or hurt the stability of randomized Kaczmarz?}

To answer this question, we must specify a particular acceleration scheme, as there are many in the literature \cite{gower2018accelerated, derezinski2025fine, LW13}.
In this work, we focus on the accelerated randomized Kaczmarz (ARK) method from \cite{derezinski2025randomized}, which accelerates convergence by means of a momentum term.
To focus on the underlying numerical issues, we analyze the simplest version of ARK where we use optimal acceleration parameters and, similar to base Kaczmarz, perform single-row updates. 
Details of the ARK method are given in \cref{sec:rak}.


As \cref{fig:exp} demonstrates, ARK converges much faster than randomized Kaczmarz, but it is also not forward stable.
Just as with base Kaczmarz, we prove a suboptimal stability result for ARK showing that the error stagnates at a level not exceeding $\tilde{\kappa}(\mat{A})^2 u$.

\begin{informaltheorem} [Suboptimal stability of ARK] \label{infthm:accelkz}
    Assume that the invertible $\mat A$ satisfies $\tilde \kappa(\mat A)^2 u \ll 1.$ Then, with particular choices of algorithm parameters, the floating-point output $\wb x_k$ of ARK after $k$ iterations satisfies
     $$ \m E \|\widehat{\b x}_k - \b x\| \lesssim \left(1 - \f 1 {4 \tilde \kappa(\mat A) \sqrt{n}} \right)^{k/4}\| \b x \| +  \| \b x \| \cdot \tilde \kappa(\mat A)^2  u. $$
      In particular, if we run for $k = \order(\tilde{\kappa}(\mat{A}) \sqrt n \log(1/u))$ iterations, then $\wb x_k$ satisfies
\begin{equation*}
    \m E \| \widehat{\b x}_k - \b x\| \lesssim \norm{\vec{x}} \cdot \tilde{\kappa}(\mat{A})^2u. 
\end{equation*}
\end{informaltheorem}
A formal statement of this result is given in \cref{thm:accelkz}.
This result shows that ARK in floating-point arithmetic maintains its faster convergence over randomized Kaczmarz, as it always holds that $\tilde \kappa (\mat A) \sqrt n \le \tilde \kappa(\mat A)^2$. (Note that $\tilde \kappa (\mat A)\geq \sqrt n$ so the only way the two convergence rates are comparable is if $\mat A$ is perfectly conditioned.)
As with base Kaczmarz, the method may be stabilized using iterative refinement.
\begin{informaltheorem} [Stability of ARK with iterative refinement] \label{infthm:accelkz_ir}
    Assume that the invertible $\mat A$ satisfies $\tilde \kappa(\mat A)^2 u \ll 1$. Suppose that we perform iterative refinement $t = \log_2(\f 1 {\tilde{\kappa}(\mat{A}) u})$ times, where each refinement executes $\order(\tilde \kappa(\mat {A})\sqrt n)$ iterations of ARK. Then, with particular choices of algorithm parameters, this scheme attains forward stability:
    $$\m E \norm{\wb x_t - \b x } \lesssim \norm{\b x} \cdot \tilde \kappa(\mat A) u.$$
\end{informaltheorem}
A formal statement appears in \cref{thm:accelkz_ir}.
Taken together, the results of this paper show that, to achieve both rapid convergence and high accuracy, it is necessary to combine acceleration and iterative refinement.

\subsection{Organization} \label{sec:org}
The paper is organized as follows. \cref{sec:notation} discusses notation and defines what it precisely means for a method to be forward stable. In \cref{sec:kz}, we examine the stability of randomized Kaczmarz. \cref{sec:ir} proves a black-box result on iterative refinement that can be applied to any (possibly randomized) numerical method achieving a general accuracy condition, and the result is applied in \cref{sec:app_kz} to stabilize randomized Kaczmarz. In \cref{sec:rak}, we perform a similar analysis on accelerated Kaczmarz with and without iterative refinement. We verify the theoretical results on randomly generated test suites in \cref{sec:experiment}, and we conclude in \cref{sec:conclusion}.

\subsection{Notation} \label{sec:notation}

Throughout, we work with a square linear system 
\begin{equation*}
    \mat{A}\vec{x} = \vec{b} \quad \text{for invertible } \mat{A} \in \mathbb{R}^{n\times n} \text{ and }\vec{b} \in \mathbb{R}^n
\end{equation*}
The solution is denoted $\vec{x} \in \mathbb{R}^n$.
The Demmel condition number of $\mat{A}$ is $\tilde\kappa(\mat{A}) \coloneqq \norm{\mat{A}}_{\rm F} \norm{\mat{A}^{-1}}$.
Here, the $\norm{\cdot}$ denotes the vector $\ell_2$ norm and its induced operator norm, and $\norm{\cdot}_{\rm F}$ is the Frobenius norm.
The Demmel condition number is always within a $\sqrt{n}$ factor of the ordinary condition number $\kappa(\mat{A}) = \norm{\mat{A}}\norm{\mat{A}^{-1}}$:
\begin{equation} \label{eq:condition-equivalence}
    \kappa(\mat{A}) \le \tilde\kappa(\mat{A}) \le \sqrt{n}\,\kappa(\mat{A}).
\end{equation}

We use a modified version of Vinogradov notation where $a \lesssim b$ indicates that $a \le C(n)b$ for a fixed, universal, but unspecified \warn{polynomial} function $C(n) > 0$ of the matrix dimension $n$.
We write $a\asymp b$ when $a\lesssim b \lesssim a$.
We write $a \ll b$ to indicate $a \le b / C(n)$ for a \warn{sufficiently large} universal polynomial function $C(n) > 0$.

The numerically computed version of a quantity $f$ is denoted by $\widehat f$, and the error is $\err(f) \coloneqq \widehat f - f$.
A quantity is \textit{exactly represented} if $\hat{f} = f.$
The function $\fl(\cdot)$ rounds its input to the nearest floating-point number.
When applying the $\err(\cdot)$ notation to an expression, we assume that it is evaluated in the natural way.
For instance, $\err(\mat{A}\vec{z})$ denotes the floating-point error of the standard matrix--vector multiplication routine on inputs $\hat{\mat{A}}$ and $\hat{\vec{z}}$.
As a warning, note that $\err(\cdot)$ is typically not a mathematical function of its input.
For instance, $\err(x + y) = \fl(\hat{x} + \hat{y}) - (x+y)$ is not a function of the input $x+y$, as $\fl(\hat{x} + \hat{y})$ is not uniquely determined by the value of $x + y.$
We work in the standard model of floating-point arithmetic:
For any $x \in \m R$, $|\fl(x) - x| \le u |x|$ and, for exactly represented scalars $a, b \in \m R$, we have 
    $$ | \err(a \mathbin{\mathrm{op}} b) | \le | a \mathbin{\mathrm{op}} b |\, u \quad \text{for } \mathrm{op} \in \{+, -, \times, \div \}.$$

For the iterates of an algorithm, we use the following:
\begin{itemize}
    \item $\b x_0, \b x_1, \dots$ are the iterates computed in exact arithmetic
    \item $\widehat{\b x}_0, \widehat{\b x}_1, \dots$ are the iterates computed in floating-point arithmetic, i.e., for $k \ge 1$, $\widehat {\b x}_k$ is the floating-point output of one iteration on input $\widehat {\b x}_{k-1}.$
    \item For $k \ge 1,$ $\tilde {\b x}_k$ is the exact output of one iteration on input $\widehat {\b x}_{k-1}$
    \item For $k \ge 1$, the error is $\err(\b x_{k}) \coloneqq  \widehat {\b x}_k - \tilde {\b x}_k$. 
\end{itemize}
The relation between these quantities is summarized in \cref{fig:iterate_relation}.
Note that, for the iterates of an algorithm, we have overloaded the $\err(\cdot)$ function to report only the error incurred on \emph{that} iteration.

\begin{figure}[t]
    \centering
    \begin{tikzpicture}[>=stealth, scale=0.9, transform shape]
        \node (xk) at (0,1.3) {$\b x_k$};
        \node (xkhat) at (0,0) {$\widehat{\b x}_k$};
        \node (xktilde) at (0,-1.3) {$\tilde{\b x}_k$};

        \node (xkp1) at (5.2,1.3) {$\b x_{k+1}$};
        \node (xkp1hat) at (5.2,0) {$\widehat{\b x}_{k+1}$};
        \node (xkp1tilde) at (5.2,-1.3) {$\tilde{\b x}_{k+1}$};

        \draw[->, thick] (xktilde) -- (xkhat);
        \draw[->, thick] (xkp1tilde) -- node[right] {$\err(\hat{\vec{x}}_{k+1})$} (xkp1hat);

        \draw[->, thick] (xk) -- node[above, midway] {exact update} (xkp1);
        \draw[->, thick] (xkhat) -- node[below=6pt, midway] {exact update} (xkp1tilde);
        \draw[->, thick, dashed] (xkhat) -- node[above] {floating-point update} (xkp1hat);

        \draw[->, dashed, thick] (-2.0,1.3) -- (xk);
        \draw[->, dashed, thick] (xkp1) -- (7.2,1.3);
        \draw[->, dotted, thick] (-2.0,0) -- (xkhat);
        \draw[->, dotted, thick] (xkp1hat) -- (7.2,0);

        \node[anchor=east, font=\small] at (-2.2,1.3) {exact iterates};
        \node[anchor=east, font=\small] at (-2.2,0) {floating-point iterates};

        \node[font=\small] at (0,2.1) {$k$};
        \node[font=\small] at (5.2,2.1) {$k+1$};
    \end{tikzpicture}
    \caption{Relation of the exact Kaczmarz iterates $\b x_k$ and the floating-point iterates $\widehat{\b x}_k$. The exact update map takes $\b x_k$ to $\b x_{k+1}$, and the same exact update map takes $\widehat{\b x}_k$ to $\tilde{\b x}_{k+1}$. The floating-point update map takes $\widehat{\b x}_k$ to $\widehat{\b x}_{k+1}$, and the error incurred by this map is $\err(\b x_{k+1}) = \widehat{\b x}_{k+1} - \tilde{\b x}_{k+1}$.}
    \label{fig:iterate_relation}
\end{figure}

An algorithm used to solve a consistent linear system $\mat A \mat x = \b b$ is said to be \textit{forward stable} if it produces a floating-point output $\wb x$ satisfying
$$ \norm{\wb x - \b x } \lesssim \norm {\b x} \cdot \kappa(\mat A) u.$$ 
By \cref{eq:condition-equivalence}, the ordinary and Demmel condition numbers are equivalent, $\kappa(\mat{A}) \asymp \tilde\kappa(\mat{A})$, so a method is also forward stable if $\norm{\err(\vec{\wb x})} \lesssim \norm{\vec{x}} \cdot \tilde\kappa(\mat{A})u$. 
A method that is not forward stable will be called \emph{forward unstable}.
We use the following basic stability properties throughout \cite[sec.~3]{higham2002accuracy}:

\begin{theorem}[Basic stability properties] \label{thm:basic_stability}
    Assume $u \ll 1$.
    Then:
    \begin{enumerate}[label=(\alph*)] 
    \item For any vector $\b w$, we have 
    $$ \| \fl(\b w) - \b w \| \lesssim \| \b w \| u.$$    
    \item For exactly represented vectors $\b w$ and $\b z$ and exactly represented scalar $\gamma \in \m R$, we have $$ \| \err(\b z \pm \b w) \| \lesssim \| \b z \pm \b w \| u, \enspace \| \err(\gamma \b z) \| \lesssim |\gamma| \|\b z\| u, \enspace \norm {\err(\langle \b z,\b w\rangle)} \lesssim \norm { \b z} \norm{\b w} u.$$
    
    \item For an exactly represented matrix $\mat A$ and an exactly represented vector $\b v$, 
    $$ \| \err(\mat A\b v) \| \lesssim \| \mat A \|  \|\b v\| u.$$
    \end{enumerate}
\end{theorem}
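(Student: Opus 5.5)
All three parts follow from the scalar model of floating-point arithmetic by working componentwise and then passing to the $\ell_2$ norm. The only place where errors accumulate is the inner product. The polynomial factor $C(n)$ hidden in $\lesssim$ absorbs the dimension dependence, and the assumption $u \ll 1$ (in particular $nu \le 1/2$) lets me discard higher-order terms in $u$.

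For (a), the standard model gives $|\fl(w_i) - w_i| \le u|w_i|$ for each entry. Squaring and summing over $i$ yields $\norm{\fl(\vec{w}) - \vec{w}} \le u\norm{\vec{w}}$.

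For (b), the first two bounds are handled the same way. Since $z_i, w_i, \gamma$ are exactly represented, each entry of $\vec{z} \pm \vec{w}$ or $\gamma\vec{z}$ comes from a single floating-point operation. That entry therefore has error at most $u|z_i \pm w_i|$, respectively $u|\gamma||z_i|$, and summing squares gives $\norm{\err(\vec{z}\pm\vec{w})} \le u\norm{\vec{z}\pm\vec{w}}$ and $\norm{\err(\gamma\vec{z})} \le u|\gamma|\norm{\vec{z}}$.

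For the inner product, I would evaluate $s = \sum_{i=1}^n z_i w_i$ by recursive summation. Each product is $z_i w_i(1+\delta_i)$ and each partial sum picks up a factor $(1+\epsilon_j)$, with $|\delta_i|, |\epsilon_j| \le u$. Unrolling the recursion by induction on the partial sums gives
\begin{equation*}
    \widehat{s} = \sum_{i=1}^n z_i w_i \prod_{j=1}^{m_i} (1+\theta_{ij}), \qquad m_i \le n, \quad |\theta_{ij}| \le u .
\end{equation*}
The key estimate is $\bigl|\prod_{j=1}^{m}(1+\theta_j) - 1\bigr| \le (1+u)^m - 1 \le \frac{mu}{1-mu} \le 2nu$ for $nu \le 1/2$. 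From this, $|\widehat{s} - s| \le 2nu \sum_i |z_i||w_i| \le 2nu\,\norm{\vec{z}}\norm{\vec{w}}$ by Cauchy--Schwarz. I expect this inductive bookkeeping of the factors $(1+\theta_{ij})$ to be the main technical step, though it is the standard $\gamma_n$ argument from \cite[sec.~3]{higham2002accuracy}.

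For (c), the $i$th entry of $\mat{A}\vec{v}$ is the inner product $\langle \vec{a}_i, \vec{v}\rangle$ with exactly represented inputs. By the bound just proven, its error is at most $2nu\,\norm{\vec{a}_i}\norm{\vec{v}}$. Squaring and summing over $i$ gives $\norm{\err(\mat{A}\vec{v})} \le 2nu\,\norm{\mat{A}}_{\rm F}\norm{\vec{v}}$. Finally, $\norm{\mat{A}}_{\rm F} \le \sqrt{n}\,\norm{\mat{A}}$ gives the claim with $C(n) = 2n^{3/2}$, which is polynomial as required.
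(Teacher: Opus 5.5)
Your proof is correct, and it is essentially the standard argument the paper relies on: the paper gives no proof and simply cites \cite[sec.~3]{higham2002accuracy}. You apply the scalar model entrywise for (a) and for the first two bounds in (b), use the $\gamma_n$-type estimate $\bigl|\prod_{j}(1+\theta_j)-1\bigr|\le 2nu$ for the recursively summed inner product, and for (c) combine the row-wise inner-product bounds with $\norm{\mat{A}}_{\rm F}\le\sqrt{n}\,\norm{\mat{A}}$.
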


\subsection{Related work}
Given the amount of work on the (randomized) Kaczmarz iteration and stochastic gradient descent, the scarcity of floating-point analyses of these methods in the literature is somewhat surprising. Among deterministic methods, a floating-point analysis has been developed for conjugate gradient \cite{greenbaum1989behavior,musco2018stability} and gradient descent \cite{STL22}, while \cite{spielman2014nearly} perform a finite-precision analysis of Chebyshev iteration. In the context of randomized methods, \cite{lee2013efficient} provides a stability analysis of an accelerated variant of coordinate descent on positive semidefinite systems, while \cite{XMHK23,XMH25} consider gradient descent under a stochastic rounding model of computation.


The literature on randomized Kaczmarz is vast, and it is beyond our scope here to summarize it. Of note, there has been a significant effort to understand how randomized Kaczmarz behaves on noisy linear systems, and to design variants of the algorithm that are robust in various ways to noise (see, e.g., \cite{Ste23,HMR23,BBD+24,Nee10,zfn13,EGW26}).
These works differ from ours in that they assume noise in the \emph{data}; for us, noise comes from rounding errors that occur during computation. Relatedly, \cite{derezinski2024solving} provided a stability analysis of randomized Kaczmarz in the model where the projection step is computed approximately (not necessarily due to rounding errors) but all other steps are performed in exact arithmetic. Still, this does not provide a clear picture of the stability of Kaczmarz in floating-point arithmetic.

More recently, there has been growing interest in accelerated Kaczmarz methods, which aim to improve convergence rates while preserving the low per-iteration cost of randomized Kaczmarz. Several approaches draw inspiration from first-order optimization, incorporating momentum or variance reduction techniques into the iteration \cite{LW13, gower2018accelerated}. In particular, \cite{derezinski2025fine} extended the partial stability analysis under inexact projections to an accelerated variant of Kaczmarz, and \cite{derezinski2025randomized} combined acceleration with other techniques such as blocking and regularization to achieve convergence rates beyond those obtained by Krylov subspace methods.


Iterative refinement is a classic technique in numerical computations \cite[ch.~12]{higham2002accuracy}; it was first analyzed in Wilkinson's classic monograph \emph{Rounding Errors in Algebraic Processes} \cite{Wil23}.
Interest in iterative refinement has been heightened in recent years due to increased interest in low-precision and mixed precision computing \cite{carson2018accelerating,HM22}.
Traditionally, iterative refinement is used to improve solutions to a linear system of equations using a matrix factorization of $\mat{A}$ computed in a lower numerical precision (i.e., a solution is demanded in double precision but $\mat{A}$ is factored in single-precision).
More recently, there has been interest in using iterative refinement in a uniform numerical precision to improve the attainable accuracy of an iterative method \cite{EGN25,EMN26}.

Our paper fits into a growing effort to understand the numerical stability of algorithms from the field of randomized linear algebra \cite{EMN26,Epp25a,BNP25a,PN25,MNTW23,Nak20,BGS25,chenakkod2026well}.
In broad strokes, the conclusions of this literature have been that, when implemented appropriately, randomized methods can reach levels of accuracy and stability comparable to standard direct methods.


\section{Randomized Kaczmarz} \label{sec:kz}
In this section, we prove \cref{thm:base_kz}, which shows that the randomized Kaczmarz algorithm converges geometrically until the error stagnates at a level not exceeding $\tilde{\kappa}(\mat{A})^2 u$.
Since this convergence horizon exceeds the level $\tilde{\kappa}(\mat{A}) u$, this analysis is consistent with the numerical observation in \cref{fig:exp_1e4} that randomized Kaczmarz is \emph{not} forward stable. 
We show in \cref{sec:ir_app_kz} that this instability can be cured by combining randomized Kaczmarz with iterative refinement.



To prove \cref{thm:base_kz}, we begin by quantifying the floating-point error $\err(\vec{x}_k)$ incurred during a single iteration of the algorithm.
Throughout, we use the notation for the iterates defined in \cref{sec:notation}; see also \cref{fig:iterate_relation} for a visual summary of the notation.

\begin{lemma}[Randomized Kaczmarz: Single iteration] \label{lem:base_kz_error}
    The error $\err(\b x_{k+1})$ incurred during the $(k+1)$st iteration of randomized Kaczmarz satisfies
    $$ \| \err (\b x_{k+1}) \| \lesssim \left ( \|\b x \| + \| \widehat{\b x}_k \| \right) u. $$
\end{lemma}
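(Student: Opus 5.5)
We will trace the floating-point evaluation of a single Kaczmarz update on the exactly represented input $\wb x_k$, with the row $\vec a = \vec a_{r(k)}$ and entry $b = b_{r(k)}$ exactly represented. We compare each computed intermediate with its exact counterpart using \cref{thm:basic_stability}. The natural order of operations is:
\begin{enumerate}[label=(\roman*)]
\item compute $\hat s = \fl(\langle \vec a, \wb x_k\rangle)$;
\item compute $\hat \rho = \fl(b - \hat s)$;
\item compute $\hat q = \fl(\langle \vec a,\vec a\rangle)$;
\item compute $\hat\gamma = \fl(\hat\rho/\hat q)$;
\item compute $\hat{\vec w} = \fl(\hat\gamma \vec a)$;
\item set $\wb x_{k+1} = \fl(\wb x_k + \hat{\vec w})$.
\end{enumerate}
The exact counterpart is $\tilde{\vec x}_{k+1} = \wb x_k + \gamma \vec a$ with $\gamma = (b - \langle \vec a,\wb x_k\rangle)/\norm{\vec a}^2$.

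Step (i) gives $|\hat s - s| \lesssim \norm{\vec a}\norm{\wb x_k} u$. Since the system is consistent, $b = \langle \vec a, \vec x\rangle$, so $|b| \le \norm{\vec a}\norm{\vec x}$. Hence $|b - \hat s| \lesssim \norm{\vec a}(\norm{\vec x}+\norm{\wb x_k})$, and step (ii) gives $|\hat\rho - \rho| \lesssim \norm{\vec a}(\norm{\vec x}+\norm{\wb x_k})u$. This is where $\norm{\vec x}$ enters the bound. Step (iii) has relative error $\lesssim u$, so with $u\ll1$ it perturbs $1/\norm{\vec a}^2$ by a relative $\order(u)$. Combining these, $|\hat\gamma - \gamma| \lesssim (\norm{\vec x}+\norm{\wb x_k})u/\norm{\vec a}$. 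We also get $|\gamma|,|\hat\gamma| \lesssim (\norm{\vec x}+\norm{\wb x_k})/\norm{\vec a}$, with second-order $u^2$ terms absorbed because $u \ll 1$.

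For step (v), $\norm{\hat{\vec w} - \gamma\vec a} \le |\hat\gamma-\gamma|\norm{\vec a} + \norm{\err(\hat\gamma\vec a)} \lesssim (\norm{\vec x}+\norm{\wb x_k})u$. For step (vi), \cref{thm:basic_stability}(b) bounds the addition error by $\norm{\wb x_k + \hat{\vec w}}u \lesssim (\norm{\wb x_k} + \norm{\vec x})u$. Adding these two contributions by the triangle inequality yields $\norm{\err(\vec x_{k+1})} \lesssim (\norm{\vec x}+\norm{\wb x_k})u$. The key point is that the factor $1/\norm{\vec a}$ in $\hat\gamma$ cancels against $\norm{\vec a}$ in the update direction, so no condition number appears.

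The main obstacle is the error in the scalar $\hat\gamma$, specifically the subtraction $b - \hat s$. Because of cancellation, this residual may be small in relative terms, but its absolute error is only controlled by $\norm{\vec a}(\norm{\vec x}+\norm{\wb x_k})u$. We therefore never claim relative accuracy for $\rho$. Throughout we work with absolute bounds scaled by $\norm{\vec a}$, and these suffice because the final update is multiplied by $\vec a/\norm{\vec a}^2$.
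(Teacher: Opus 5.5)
Your proposal is correct and follows the paper's argument. The paper compresses your steps (i)--(vi) into a single ``straightforward application'' of \cref{thm:basic_stability}, which yields $\norm{\err(\vec x_{k+1})} \lesssim \bigl(|b_{r(k)}| + \norm{\vec a_{r(k)}}\norm{\wb x_k}\bigr)u/\norm{\vec a_{r(k)}}$, and then applies consistency, $|b_{r(k)}| \le \norm{\vec a_{r(k)}}\norm{\vec x}$, exactly as you do; your operation-by-operation tracking, including the cancellation of $\norm{\vec a}$, just fills in the details the paper leaves implicit.
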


\begin{proof}
    A straightforward application of the basic stability results (\cref{thm:basic_stability}) yields the bound
    \begin{equation*}
        \| \err (\b x_{k+1}) \| \lesssim \frac{|b_{r(k)}| + \norm{\vec{a}_{r(k)}}\norm{\widehat{\vec{x}}_k}}{\norm{\vec{a}_{r(k)}}} \cdot u.
    \end{equation*}
    Since $\mat{A}\vec{x} = \vec{b}$, it holds that $|b_{r(k)}| = |\langle \vec{a}_{r(k)},\vec{x}\rangle| \le \norm{\vec{a}_{r(k)}}\norm{\vec{x}}$.
    The stated bound follows.
\end{proof}


We now combine the local floating-point error bound from \cref{lem:base_kz_error} with the standard analysis of randomized Kaczmarz to analyze the algorithm in finite-precision arithmetic, proving \cref{thm:base_kz}.


\begin{proof}[Proof of \cref{thm:base_kz}]
By definition $\tilde{\vec{x}}_{k+1}$ is the result of applying one step of randomized Kaczmarz to $\widehat{\vec{x}}_k$ in exact arithmetic
\begin{equation*} \m E \big[\| \tilde {\b x}_{k+1} - \b x\| \mid \hat{\vec{x}}_k \big] \le \left(\m E \big[\| \tilde {\b x}_{k+1} - \b x\|^2 \mid \hat{\vec{x}}_k \big] \right)^{1/2} \le \sqrt {1 - {\tilde \kappa}(\mat{A})^{-2}} \cdot \| \widehat {\b x}_{k} - \b x\|. 
\end{equation*}
Here, $\tilde{\kappa}(\mat{A})$ is the Demmel condition number \cref{eq:demmel}.
Combining this result with \cref{lem:base_kz_error}, we obtain the error bound
\begin{equation} \label{eq:rk_one_step_bound}
\begin{split}
\m E \| \widehat{\b x}_{k+1} - \b x\|
&\le 
\m E \| \widehat{\b x}_{k+1} - \tilde {\b x}_{k+1} \| + \m E \| \tilde {\b x}_{k+1} - \b x \|
\\& \le \m E \left[C \left( \| \b x \| + \| \widehat {\b x}_k \| \right) u\right] + \sqrt{1 - {\tilde \kappa}(\mat{A})^{-2}} \cdot \m E \| \widehat {\b x}_{k} - \b x\|.
\end{split}
\end{equation}
Here, $C\lesssim1$ is the prefactor suppressed in \cref{lem:base_kz_error}.


Define $e_k \coloneqq \m E\| \widehat{\b x}_{k} - \b x \|,$ and let $r = \sqrt{1 - {\tilde \kappa}(\mat{A})^{-2}}$.
Then the one-step bound \cref{eq:rk_one_step_bound} may be equivalently written
$$ e_{k+1} \le C( \| \b x \| + \m E \| \widehat {\b x}_k \| )u + r e_{k}.$$
Inductively assume a bound 
$ \m E\| \widehat{\b x}_k \| \le  \| \b x\|,$ so the above inequality implies
$$ e_{k+1} \le 2C \| \b x \|u + r e_k.$$
The solution to this inhomogeneous linear differential inequality gives
\begin{equation*}
\m E \| \widehat{\b x}_k - \b x\| \le \left(\| \widehat {\b x}_0 - \b x \| - \f {2C \| \b x\| u} {1-r}\right) r^k + \f {2C \| \b x\| u} {1-r}.
\end{equation*}
Since $r < 1 - \tilde\kappa(\mat{A})^{-2}/2$, we obtain the bound
\begin{equation*}
\m E \| \widehat{\b x}_k - \b x\| \le \left(1 - \frac{1}{2\tilde{\kappa}(\mat{A})^{2}}\right)^k\| \widehat {\b x}_0 - \b x \| + 4C  \| \b x\| \cdot \tilde\kappa(\mat{A})^2u.
\end{equation*}
By the assumption $\tilde{\kappa}(\mat{A})^2u \ll 1$, we confirm the inductive assumption $\m E \norm{\vec{\hat{x}}_k} \le \norm{\vec{x}}$ holds.
The desired bound has been proven.
\end{proof} 

\begin{remark}[Is randomness to blame?] 
    One might initially suspect that the failure of the randomized Kaczmarz method to be forward stable arises from the randomness itself---that it is the fact that the rows are chosen randomly that introduces noise which compounds over iterations.
    But as the analysis lays bare, this is not the case: 
    The lack of forward stability arises because the algorithm incurs a small error at each step, and the method converges at a sufficiently slow rate that these errors accumulate to a level above that of a forward stable method.
    The analysis suggests the same error propagation would arise with a deterministic selection of rows.
    Indeed, the stability properties of deterministic Kaczmarz algorithms could even be worse than the randomized method, as bad row orderings can lead to much slower convergence than randomized Kaczmarz.
\end{remark}

\section{Iterative refinement and its application to randomized Kaczmarz} \label{sec:ir_app_kz}

This section develops general stability results for iterative refinement (\cref{sec:ir}), showing that any solver which reduces the error by a constant factor at every iteration produces a forward stable solution after enough steps of iterative refinement.
In \cref{sec:app_kz}, we use this result to prove randomized Kaczmarz with iterative refinement is forward stable.

\subsection{Iterative refinement} \label{sec:ir}
We prove that iterative refinement upgrades an algorithm that contracts the error at every step to one that is forward stable.
Our analysis may be seen as a variant of the results of \cite[sec.~3]{JW77} that has been generalized to treat randomized algorithms.
To prove our results in a bit more generality, we work with a rectangular matrix $\mat{A} \in \m R^{m\times n}$ for this subsection.

\begin{theorem}[Iterative refinement with expectation] \label{thm:refinement_exp}
    Let $\mat{A} \in \m R^{m \times n}$, and assume $\mat{A}$ has full column-rank.
    Consider a floating-point subroutine $\Call{Solver}{}$ satisfying
    \begin{equation} \label{eq:ir_condition}
        \m E \norm{ \Call{Solver}{\vec{b}} - \mat{A}^+\vec{b}} \le q \cdot \norm{\mat{A}^+\vec{b}}
    \end{equation}
    for some $q$ and all inputs $\b b$, where $\mat{A}^+$ denotes the Moore-Penrose pseudoinverse of $\mat A$. There exists an absolute polynomial $C = C(m, n)$, such that if $(1 + Cu) q < 1/2$ and $C\tilde \kappa(\mat A) u < 3/4,$ and if $\b b \in \mathrm{Range}(\mat A)$,
    then the iterative refinement scheme 
    \begin{align*}
        \vec{x}_0 &\coloneqq \Call{Solver}{\vec{b}}, \\
        \vec{x}_{i+1} &\coloneqq \vec{x}_i + \Call{Solver}{\vec{b} - \mat{A}\vec{x}_i}, \text{ for } i = 0, 1, \ldots, t-1
    \end{align*}
    produces a floating-point solution $\wb x_t$ satisfying
    $$\m E \norm{\wb x_t - \mat{A}^+ \b b } \le (C\tilde{\kappa}(\mat{A}) u + (1 + Cu)^t q^{t+1}) \cdot \norm{\mat{A}^+ \b b}.$$
    The Demmel condition number for a matrix $\mat{A}$ of full column rank is defined as $\tilde{\kappa}(\mat{A}) = \norm{\mat{A}}_{\rm F} \norm{\mat{A}^+}$.
    Consequently, forward stability is achieved in expectation in
    $t =  \log_2(\f{1}{\tilde{\kappa}(\mat{A}) u})$ refinements, in the sense that 
     \begin{align*}\m E \norm{\wb x_t - \mat{A}^+ \b b } \lesssim \tilde{\kappa}(\mat{A}) u \cdot \norm{\mat{A}^+ \b b}.\end{align*}
\end{theorem}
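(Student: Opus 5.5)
We track the forward error $\vec{d}_i \coloneqq \wb x_i - \vec{x}$, with $\vec{x} = \mat{A}^+\vec{b}$, across one refinement step and derive a one-step recursion in expectation.

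\textbf{Residual.} Because $\vec{b} \in \mathrm{Range}(\mat{A})$, the exact residual is $\vec{r}_i = \vec{b} - \mat{A}\wb x_i = -\mat{A}\vec{d}_i$. The computed residual is $\wb r_i = \vec{r}_i + \vec{f}_i$. By \cref{thm:basic_stability}(b),(c), the rounding error satisfies
\begin{equation*}
\norm{\vec{f}_i} \lesssim (\norm{\vec{b}} + \norm{\mat{A}}\norm{\wb x_i})u \le \norm{\mat{A}}_{\rm F}\,(2\norm{\vec{x}} + \norm{\vec{d}_i})\,u .
\end{equation*}
The computed $\wb r_i$ need not lie in $\mathrm{Range}(\mat{A})$. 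However, \eqref{eq:ir_condition} holds for all inputs and measures error against $\mat{A}^+\wb r_i$, so this causes no difficulty.

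\textbf{Solver call and update.} We apply \eqref{eq:ir_condition} conditionally on the past to $\wb e_i = \Call{Solver}{\wb r_i}$. Since $\mat{A}^+\mat{A} = \mat{I}$ for full column rank, $\mat{A}^+\wb r_i = -\vec{d}_i + \mat{A}^+\vec{f}_i$. This gives
\begin{equation*}
\m E[\norm{\wb e_i + \vec{d}_i - \mat{A}^+\vec{f}_i} \mid \wb x_i] \le q\,\norm{-\vec{d}_i + \mat{A}^+\vec{f}_i}.
\end{equation*}
The update $\wb x_{i+1} = \fl(\wb x_i + \wb e_i)$ incurs an error $\vec{g}_i$ with $\norm{\vec{g}_i} \lesssim \norm{\wb x_i + \wb e_i}u$. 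Hence
\begin{equation*}
\vec{d}_{i+1} = (\wb e_i + \vec{d}_i - \mat{A}^+\vec{f}_i) + \mat{A}^+\vec{f}_i + \vec{g}_i .
\end{equation*}
We bound $\norm{\wb x_i + \wb e_i} \le \norm{\vec{x}} + \norm{\vec{d}_{i+1}} + \norm{\vec{g}_i}$ and absorb the $\norm{\vec{g}_i}$ term using $Cu<1$. We also use $\norm{\mat{A}^+}\norm{\vec{f}_i} \lesssim \tilde\kappa(\mat{A})u\,(2\norm{\vec{x}} + \norm{\vec{d}_i})$.

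\textbf{Recursion.} Taking full expectations with $\delta_i = \m E\norm{\vec{d}_i}$ gives, after collecting terms,
\begin{equation*}
\delta_{i+1} \le (1+Cu)\,(q + C\tilde\kappa(\mat{A})u)\,\delta_i + C'\tilde\kappa(\mat{A})u\,\norm{\vec{x}} .
\end{equation*}
We use $\tilde\kappa \ge 1$ to merge the $u\norm{\vec{x}}$ terms into the $\tilde\kappa u\norm{\vec{x}}$ term. The base case $\delta_0 \le q\norm{\vec{x}}$ comes directly from \eqref{eq:ir_condition}.

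\textbf{Main obstacle: the constants.} The difficulty is that the coupling term $\tilde\kappa(\mat{A})u\,\delta_i$ multiplies $\delta_i$ rather than $\norm{\vec{x}}$. This means the contraction factor is $(1+Cu)q + C\tilde\kappa u$, not $(1+Cu)q$. To obtain the clean form stated in the theorem, we split $\tilde\kappa u\,\delta_i$ crudely. Once $\delta_i \le \norm{\vec{x}}$ (true inductively), we bound $C\tilde\kappa u\,\delta_i \le C\tilde\kappa u\,\norm{\vec{x}}$ and move it into the additive term. This leaves
\begin{equation*}
\delta_{i+1} \le (1+Cu)\,q\,\delta_i + C''\tilde\kappa u\,\norm{\vec{x}} .
\end{equation*}

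\textbf{Unrolling.} Iterating this recursion $t$ times from $\delta_0 \le q\norm{\vec{x}}$ produces the term $(1+Cu)^t q^{t+1}\norm{\vec{x}}$. The additive terms form a geometric series with ratio $(1+Cu)q<1/2$, so they sum to at most $2C''\tilde\kappa u\norm{\vec{x}}$. Redefining $C$ as a larger polynomial yields the stated bound. The assumption $C\tilde\kappa u<3/4$ is used to keep the induction hypothesis $\delta_i\le\norm{\vec{x}}$ valid. The final claim follows because $(1+Cu)^tq^{t+1} \le 2^{-t}$, and taking $t = \log_2(1/(\tilde\kappa u))$ makes this at most $\tilde\kappa u$.
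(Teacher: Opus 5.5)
Your proposal is correct and follows essentially the same route as the paper. You bound the computed-residual error, apply the solver hypothesis to $\mat{A}^+\wb r_i = -(\wb x_i - \vec{x}) + \mat{A}^+\err(\vec{r}_i)$, use the inductive hypothesis $\m E\norm{\wb x_i - \vec{x}} \le \norm{\vec{x}}$ to turn the $\tilde\kappa(\mat{A})u$ coupling into an additive $\tilde\kappa(\mat{A})u\norm{\vec{x}}$ term, and unroll the linear recursion with ratio $(1+Cu)q<1/2$. The only differences are bookkeeping: you derive the coupled recursion before invoking the inductive bound and treat the final addition's rounding error explicitly, whereas the paper invokes the inductive bound immediately when estimating $\err(\vec{r}_i)$.
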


\begin{proof}
    Let $ e_i = \m E \norm{\wb x_i - \mat {A}^+ \b b},$ let $\b r_i = \b b - \mat{A} \wb x_i,$ and let $\wb r_i$ be the result of computing $\b b - \mat{A} \wb x_i$ in floating point. Inductively assume that $e_i \le \norm{\mat A^+ \b b }.$ Then,
    \begin{align*}
    \m E [\err(\b r_i)] \lesssim (\norm{\b b} + \norm{\mat A}\m E [\norm{ \wb x_i}])u \lesssim \norm{\mat A}\norm{\mat A^+ \b b} u.
    \end{align*}
    In the second equation, we used the bound $\norm{\b b} = \norm{\mat{A}\mat{A}^+\vec{b}}\le \norm{\mat{A}} \norm{\mat{A}^+\vec{b}}$ and the triangle inequality $\m E\norm{\wb x_i} \le \norm{\mat{A}^+\vec{b}} + e_i \le 2\norm{\mat{A}^+\vec{b}}$.
    Hence, for some polynomials $c_1 = c_1(m, n)$ and $c_2 = c_2(m, n)$,
    \begin{align*}
    \m E \norm{ \wb{x}_{i+1} - \mat A^+ \b b } &\le 
    (1 + c_1u) q \cdot \m E \norm{ \wb{x}_i - \mat A^+ \b b } + (1 + c_1u) \m E \norm{ \mat A^{+} \err(\b r_i) } + c_1 u \norm{\mat {A}^+ \b b}
    \\&\le (1 + c_1u) q \cdot \m E \norm{ \wb{x}_i - \mat A^+ \b b } + c_2 \tilde{\kappa}(\mat{A}) u \cdot \norm{\mat A^+ \b b}.
    \end{align*}
    Thus, by solving the inhomogeneous linear differential inequality, we obtain
     \begin{align*} 
     e_{i} &\le \left(e_0 - \f {c_2 \tilde{\kappa}(\mat{A}) u \cdot \norm{\mat A^+ \b b}} {1- (1 + c_1u) q}\right) (1 + c_1u)^i q^i + \f {c_2 \tilde \kappa(\mat A) u} {1-(1 + c_1u) q} \cdot \norm{\mat A^+ \b b} 
     \\&\le ((1 + Cu)^i q^{i+1} + C \tilde \kappa (\mat A) u) \cdot \norm{\mat A^+ \b b},
     \end{align*}
     for some polynomial $C = C(m, n)$. Note that the inductive hypothesis $e_i \le \norm{\mat A^+ \b b}$ holds if $C \tilde \kappa(\mat A) u < 3/4.$
\end{proof}

We conclude this section with two remarks on this result.
\begin{remark} [Number of refinements] \label{rem:t}
If the system is ill-conditioned, then $\tilde{\kappa}(\mat{A}) u $ is large, and thus $t = \log_2\big(\tfrac{1}{\tilde{\kappa}(\mat{A}) u}\big)$ suggests that \textbf{less} refinements are needed, which may be counter-intuitive. The explanation for this is that when $\tilde{\kappa}(\mat{A}) u$ is large, the forward stability requirement $\m E \norm{\wb x_t - \mat{A}^+ \b b } \lesssim \tilde{\kappa}(\mat{A}) u \cdot \norm{\mat A^+ \b b}$ is less restrictive, so fewer refinements are needed. The difficulty in solving the ill-conditioned system appears when attempting to satisfy the condition \eqref{eq:ir_condition}.

In single precision where $u \approx 10^{-8}$, in the worst case scenario where $\tilde{\kappa}(\mat{A}) \approx 1$, we require at most $\log_2(\tfrac 1 u) \approx 27$ refinements, while double precision with $u \approx 10^{-16}$ requires at most $\log_2(\tfrac 1 u) \approx 54$ refinements.
If $q \ll 1$, the number of iterations can be \emph{much} smaller.
Indeed, under the mild condition $2\, \log \big(\tfrac 2 {1 + Cu} \big) > 1$, the maximum number of iterative refinement steps is at most 
\begin{equation} \label{eq:kz_ir_iterations}
t =\frac{ \log\big(\tfrac{1}{\tilde{\kappa}(\mat{A}) u}\big)}{ \log \big(\tfrac{1}{(1 + Cu) q}\big) }\le (1 + 2\log(1 + Cu))\frac{\log\big(\tfrac{1}{\tilde{\kappa}(\mat{A}) u}\big)}{ \log \big(\tfrac{1}{ q}\big)} \approx \frac{\log\big(\tfrac{1}{\tilde{\kappa}(\mat{A}) u}\big)}{ \log \big(\tfrac{1}{ q}\big)}.
\end{equation}
Here, $1 + 2 \log(1 + Cu)$ is close to 1, unless the system is very large.
\end{remark}

\begin{remark} [Expectation]
 Notice that the hypothesis \eqref{eq:ir_condition} places an assumption on the expected error after one step of the procedure.
 This is necessary for our us as we analyze randomized methods.
 One can also apply \cref{thm:refinement_exp} to deterministic methods, in which case the expectation can be ignored.
\end{remark}

\subsection{Application to randomized Kaczmarz} \label{sec:app_kz}
We saw in \cref{thm:base_kz} that randomized Kaczmarz is not forward stable, with convergence horizon proportional to $\norm{\b x} \cdot \tilde \kappa(\mat A)^2 u.$
In this section, we prove \cref{thm:base_kz_ir}, which shows that, provided $\tilde{\kappa}(\mat A)^2 u \ll 1$, randomized Kaczmarz with iterative refinement achieves error $\norm{\b x} \cdot \tilde{\kappa}(\mat A) u$, the characteristic error for a forward stable method.
Let us emphasize that this result \cref{thm:base_kz_ir} is only a \emph{conditional} stability result, as we assume $\tilde{\kappa}(\mat A)^2 u \ll 1$.

Before proving \cref{thm:base_kz_ir}, a few words about implementation of randomized Kaczmarz with iterative refinement are in order.
To implement the method with minimal storage, one only need store two arrays beyond the storage needed for the inputs $\mat{A}$ and $\vec{b}$.
The first vector $\vec{x}$ stores the current solution, and the second vector $\vec{e}$ stores the current approximate solution to the residual equation $\mat{A}\vec{e} = \vec{r} \coloneqq \vec{b} - \mat{A}\vec{x}$.
The residual vector $\vec{r}$ need not be stored, as individual entries $r_i = b_i - \langle \vec{a}_i, \vec{x}\rangle$ can be generated on the fly.
With this implementation, the total cost of a single step of randomized Kaczmarz is $8n$ floating point operations, as compared to the $6n$ floating point operations for randomized Kaczmarz without iterative refinement.

Now, we prove \cref{thm:base_kz_ir} by combining \cref{thm:base_kz} with \cref{thm:refinement_exp}.
\begin{proof} [Proof of \cref{thm:base_kz_ir}]
    Let $k_i$ be the number of iterations of randomized Kaczmarz in the $i$th refinement, and let $C_1(n)$ be a polynomial so that~\cref{thm:base_kz} gives,
    \begin{align*}
    \m E \| \widehat{\b x}_{1} - \b x\| 
    \le \left[  \left(1 - \f {1} {2\tilde{\kappa}(\mat{A})^{2}}\right) ^{k_1} + C_1 \tilde{\kappa}(\mat{A})^2 u \right] \norm{\b x}.
    \end{align*} 

    Then, during the $i$th refinement, $ \left[ \left(1 - \f {1} {2\tilde{\kappa}(\mat{A})^{2}}\right) ^{k_i} + C_1 \tilde{\kappa}(\mat{A})^2 u \right]$ is the $q$ defined in~\cref{thm:refinement_exp}. The theorem will immediately give the desired result, so it suffices to show that $(1 + C_2u) q < 1/2$ and $C_2 \tilde \kappa(\mat A) u < 3/4$, where we set $C_2$ to be the polynomial $C$ defined in~\cref{thm:refinement_exp}. If we choose $k_i$ to satisfy 
    $$ k_i > \f {\log \left(\f  1 {2(1 + C_2u)} - C_1 \tilde \kappa(\mat{A})^2 u \right)} { \log \left(1 - \f {1}{2\tilde \kappa(\mat{A})^{2}} \right) } = \order(\tilde \kappa(\mat A)^2),$$ then
    \begin{align*} 
    (1 + C_2u) q 
    < (1 + C_2u) \left[ \left(\f  1 {2(1 + C_2u)} - C_1\tilde \kappa(\mat{A})^2 u \right) + C_1\tilde \kappa(\mat{A})^2 u \right]
    = \f 1 2.
    \end{align*}
    The second requirement, $C_2 \tilde \kappa(\mat A) u < 3/4$, is implied by the assumption $\tilde \kappa(\mat{A})^2 u \ll 1.$
\end{proof}

\section{Accelerated randomized Kaczmarz} \label{sec:rak}
In the previous section, we observed that iterative refinement can help stabilize randomized Kaczmarz.
However, as per \eqref{eq:kz_ir_iterations}, we still need a large number of iterations to do so, especially when the system is ill-conditioned.
Thus, in search of a method that is both fast and stable, we turn to analyzing accelerated randomized Kaczmarz (ARK).

Because the instability of randomized Kaczmarz arises from slow convergence and the resulting accumulation of floating-point errors, one might expect the more rapidly converging accelerated Kaczmarz method to be forward stable even without iterative refinement.
However, as we demonstrate empirically and support with rounding-error analysis, this is not the case.

We study a version of ARK that uses two auxiliary iterate sequences, as proposed by \cite{gower2018accelerated}, and an additional regularization term as suggested by \cite{derezinski2025randomized}. This acceleration scheme has received significant attention recently for its strong convergence guarantees \cite{derezinski2025fine,derezinski2025randomized} and empirical performance \cite{rathore2024have,rathore2026turbocharging}.
The method requires three tunable parameters $\tilde \mu$, $\tilde \nu$, and $\lambda$; we discuss how to set these parameters below.
Using these values, we define auxiliary parameters $\beta \coloneqq 1 - \sqrt{\tilde \mu / \tilde \nu}, \gamma \coloneqq 1 / \sqrt{\tilde \mu \tilde \nu},$ and $\alpha \coloneqq 1 / (1 + \gamma \tilde \nu)$ and initialize $\b v_0 \coloneqq  \b x_0$ and $\b y_0 \coloneqq  \b x_0$.
The ARK method is based on the following update rule:
\begin{equation}
\begin{cases}
    \b x_k = \alpha \b v_k + (1 - \alpha) \b y_k \\
    \text{Pick row index } r(k) \text { from } [n] \text{ with probability } \propto \norm{\b a_{r(k)}}^2 + \lambda \\ 
    \b w_k = \f {\langle \b a_{r(k)}, \b x_k \rangle - b_{r(k)}} { \| \b a_{r(k)} \| ^2 + \lambda} \b a_{r(k)} \\
    \b y_{k+1} = \b x_k - \widehat{\b w}_k \\
    \b v_{k+1} = \beta \b v_k + (1 - \beta) \b x_k - \gamma \widehat {\b w}_k.
\end{cases} \label{eq:ARK}
\end{equation}
Note that we choose row $r(k)$ with probability proportional to the \emph{regularized} probabilities $\norm{\b a_{r(k)}}^2 + \lambda$.

\subsection{Choice of acceleration parameters}

This subsection will discuss how to choose the tunable parameters $\tilde \mu$, $\tilde \nu$, and $\lambda$.
First, we will fix the regularization $\lambda$ and discuss how to pick the acceleration parameters $\tilde \mu$ and $\tilde \nu$.
Next, we discuss the choice of the regularization $\lambda$.

For now, assume $\lambda \ge 0$ is fixed.
To reason about the parameters $\tilde\mu$ and $\tilde\nu$, we introduce the regularized projection matrix
$\b P_{\lambda, i} = \f {\b a_{i} \b a_{i}^\top }{\norm{\b a_{i}}^2 + \lambda}$ associated with row index $i$, and let $\overline {\b P}_{\lambda} \coloneqq \m E[\b P_{\lambda, i}]$ denote its expectation for a random index $i$ selected according to the regularized randomized Kaczmarz distribution.
The ideal ARK acceleration parameters take the form
\begin{equation}\mu \coloneqq  \lambda_{\rm min}^{+}( \overline{\b P}_{\lambda}), \enspace \nu \coloneqq  \lambda_{\rm max}(\m E[( \overline{\b P}_\lambda^{\dagger/2} \b P_{\lambda, r(k)} \overline {\b P}_\lambda^{\dagger/2})^2] ). \label{eq:mu_nu}
\end{equation}
Naturally, these parameters are difficult to estimate.

Fortunately, one can prove convergence guarantees for any choice of parameters $\tilde \mu \le \mu$ and $\tilde \nu \ge \nu$.
In particular, given such parameters, the ARK method satisfies the following recursive bound \cite[Lemma 2.3]{derezinski2025randomized} in exact arithmetic:
\begin{equation}
    \m E \left [ \| {\b v}_{k+1} - \b x\|^2_{\overline {\b P}_\lambda^\dagger} + \f 1 {\tilde \mu} \| {\b y}_{k+1} - \b x \|^2 \right ] \le \left(1 - \f 1 2 \sqrt{ \f {\tilde \mu}{\tilde \nu}} \right)  \left( \| {\b v}_k - \b x\|^2_{\overline {\b P}_\lambda^\dagger} + \f 1 {\tilde \mu} \| {\b y}_k - \b x \|^2\right).
\end{equation}
Thus, the ratio $\tilde \mu / \tilde \nu$ determines the rate of convergence, so it is optimal to choose $\tilde \mu$ and $\tilde \nu$ to be as close as possible to the ideal parameters $\mu$ and $\nu$. The authors in \cite{derezinski2025randomized} prove bounds on $\mu$ and $\nu$ when $\mat{A}$ is preprocessed with a randomized Hadamard transform that ensures that $\mat{A}$ satisfies an incoherence property.
The following lemma provides an alternate bound that holds in general and does not require preprocessing. The proof is deferred to~\cref{sec:mu_nu}.

\begin{lemma}[Acceleration parameters] \label{lem:mu_nu}
The optimal parameters $\mu$ and $\nu$ governing the rate of acceleration satisfy the following:
    \begin{align*}
        \mu &= \lambda_{\rm min}^+(\overline{\b P}_\lambda) = \f {\sigma_{\rm min}^2(\mat A)} { \norm{\mat A}_{\rm F}^2 + n \lambda}, \\ 
        \nu &= \lambda_{\rm max}(\m E[( \overline{\b P}_\lambda^{\dagger/2} \b P_{\lambda, i} \overline {\b P}_\lambda^{\dagger/2})^2]) \le \f {\norm{\mat A}_{\rm F}^2 + n \lambda} { \min_i \norm{\b a_i}^2 + \lambda }.
    \end{align*}
\end{lemma}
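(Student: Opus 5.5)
First compute $\overline{\b P}_\lambda$ explicitly. Row $i$ is drawn with probability $p_i = (\norm{\b a_i}^2+\lambda)/(\norm{\mat A}_{\rm F}^2 + n\lambda)$, so the normalizations cancel:
\begin{equation*}
\overline{\b P}_\lambda = \sum_i p_i \,\frac{\b a_i \b a_i^\top}{\norm{\b a_i}^2+\lambda} = \frac{\mat A^\top \mat A}{\norm{\mat A}_{\rm F}^2 + n\lambda}.
\end{equation*}
Since $\mat A$ is invertible, $\mat A^\top\mat A$ is positive definite. Hence $\lambda_{\rm min}^+(\overline{\b P}_\lambda) = \lambda_{\min}(\overline{\b P}_\lambda)$, and this equals $\sigma_{\min}^2(\mat A)/(\norm{\mat A}_{\rm F}^2+n\lambda)$. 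That settles the formula for $\mu$, and it is an exact equality.

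For $\nu$, write $D \coloneqq \norm{\mat A}_{\rm F}^2+n\lambda$. Because $\overline{\b P}_\lambda$ is invertible, $\overline{\b P}_\lambda^{\dagger/2} = D^{1/2}(\mat A^\top\mat A)^{-1/2}$. Set $\b u_i \coloneqq (\mat A^\top\mat A)^{-1/2}\b a_i$. Then
\begin{equation*}
\b M_i \coloneqq \overline{\b P}_\lambda^{\dagger/2}\b P_{\lambda,i}\overline{\b P}_\lambda^{\dagger/2} = \frac{D}{\norm{\b a_i}^2+\lambda}\,\b u_i\b u_i^\top,
\end{equation*}
which is a rank-one positive semidefinite matrix. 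Therefore $\b M_i^2 = \norm{\b u_i}^2 \b M_i$.

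The key observation is that $\norm{\b u_i}^2 = \b a_i^\top(\mat A^\top\mat A)^{-1}\b a_i$ is the $i$th leverage score of $\mat A$. For a square invertible $\mat A$ it equals $1$, since $\mat A(\mat A^\top\mat A)^{-1}\mat A^\top = \mat I$. More generally it is at most $1$, so $\b M_i^2 \preceq \b M_i$.

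It follows that $\m E[\b M_i^2] = \m E[\b M_i] = \overline{\b P}_\lambda^{\dagger/2}\,\overline{\b P}_\lambda\,\overline{\b P}_\lambda^{\dagger/2} = \mat I$, so $\nu = 1$ here. This is certainly bounded by $D/(\min_i\norm{\b a_i}^2+\lambda)$, because $D \ge \norm{\b a_i}^2+\lambda$ for every $i$.

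If one prefers not to use the leverage-score identity, the fallback bound goes as follows. Since $\b M_i$ is rank-one, $\b M_i^2 = \operatorname{tr}(\b M_i)\,\b M_i$. Moreover $\operatorname{tr}(\b M_i) = D\norm{\b u_i}^2/(\norm{\b a_i}^2+\lambda)$. Bounding $\norm{\b u_i}^2\le 1$ and the denominator below by $\min_i\norm{\b a_i}^2+\lambda$, then taking expectations, gives
\begin{equation*}
\m E[\b M_i^2] \preceq \frac{D}{\min_i\norm{\b a_i}^2+\lambda}\,\m E[\b M_i] = \frac{D}{\min_i\norm{\b a_i}^2+\lambda}\,\mat I.
\end{equation*}
This is exactly the stated inequality.

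The only delicate step is handling the pseudo-inverse square root and the leverage-score bound $\norm{\b u_i}\le 1$. Both are immediate here because $\mat A$ is square and invertible. Everything else is a direct computation.
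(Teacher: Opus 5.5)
Your computation of $\overline{\b P}_\lambda = \mat A^\top\mat A/D$ and of $\mu$ is correct and coincides with the paper's. Your main argument for $\nu$, however, contains a false step. With $\b M_i = \frac{D}{\norm{\b a_i}^2+\lambda}\,\b u_i\b u_i^\top$, squaring gives $\b M_i^2 = \frac{D}{\norm{\b a_i}^2+\lambda}\norm{\b u_i}^2\,\b M_i$, not $\norm{\b u_i}^2\b M_i$. You dropped the scalar $D/(\norm{\b a_i}^2+\lambda)\ge 1$.

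Consequently $\b M_i^2\preceq \b M_i$ fails, and the conclusion $\nu=1$ is false. For instance, with $\mat A=\mat I$ and $\lambda=0$ you get $\b M_i = n\,\b e_i\b e_i^\top$ and $\m E[\b M_i^2] = n\mat I$, so $\nu = n$. More generally, for square invertible $\mat A$ the matrix $\mat A(\mat A^\top\mat A)^{-1/2}$ is orthogonal with rows $\b u_i^\top$. Hence $\m E[\b M_i^2] = D\sum_i \b u_i\b u_i^\top/(\norm{\b a_i}^2+\lambda)$ has eigenvalues $D/(\norm{\b a_i}^2+\lambda)$, and so $\nu = D/(\min_i\norm{\b a_i}^2+\lambda)\ge n$. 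The lemma's bound is therefore attained, and $\nu=1$ is impossible for $n\ge 2$. The claim also contradicts the identity $\b M_i^2=\operatorname{tr}(\b M_i)\,\b M_i$ that you state a few lines later.

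Your ``fallback'' argument is correct, and it should be the proof. Since $\b M_i\succeq 0$ and $\operatorname{tr}(\b M_i)\le D/(\min_j\norm{\b a_j}^2+\lambda)$, you get $\b M_i^2\preceq \frac{D}{\min_j\norm{\b a_j}^2+\lambda}\,\b M_i$, and $\m E[\b M_i]=\mat I$ finishes the argument.

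This is the paper's proof up to conjugation by $\overline{\b P}_\lambda^{-1/2}$. The paper uses the unit leverage score to compute $\b P_{\lambda,i}\overline{\b P}_\lambda^{-1}\b P_{\lambda,i} = \frac{D}{(\norm{\b a_i}^2+\lambda)^2}\,\b a_i\b a_i^\top$. It then bounds $\sum_i \b a_i\b a_i^\top/(\norm{\b a_i}^2+\lambda)\preceq \mat A^\top\mat A/(\min_j\norm{\b a_j}^2+\lambda)$ before conjugating back. Your version needs only $\norm{\b u_i}\le 1$, so it also covers tall full-column-rank $\mat A$ without change.

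Delete the $\nu=1$ paragraph and present the fallback as the argument.
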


\cref{lem:mu_nu} shows that we may pick $\tilde \mu \coloneqq \f {\sigma^2_{\rm min}(\mat A)} { \norm{\mat A}_{\rm F}^2 + n \lambda}$ and $\tilde \nu \coloneqq \f {\norm{\mat A}_{\rm F}^2 + n \lambda}{\min_i \norm{\b a_i}^2 + \lambda}$.
Of course, these values are still difficult to compute efficiently.
For ease of analysis, we shall assume these near-ideal values are available to us. 
Under this assumption, the ratio governing the acceleration is 
$$ \f {\tilde \mu } { \tilde \nu} = \f { \min_i \norm{\b a_i}^2 + \lambda }{ \norm{\mat A}_{\rm F}^2 + n \lambda}  \cdot \f {\sigma^2_{\rm min}(\mat A)} { \norm{\mat A}_{\rm F}^2 + n \lambda}.$$

Now, we turn to selection of the regularization $\lambda$.
We consider two settings:
\begin{enumerate} [label=(\arabic*)]
\item \label{op:1} \textbf{No pre-processing, use regularization.} 
For any matrix $\mat{A}$, we can set $\lambda = \norm{\mat A}_{\rm F}^2 / n$ to obtain a controlled rate of convergence:
$$ \f {\tilde \mu } { \tilde \nu} \ge \f 1 {4n} \f {\sigma^2_{\rm min}(\mat A)}{\norm{\mat A}_{\rm F}^2}  = \f {\tilde \kappa(\mat A)^{-2}}{4n} \implies 1 - \f 1 2 \sqrt \f {\tilde \mu } { \tilde \nu} \le 1 - \f {1}{4 \tilde \kappa(\mat A)\sqrt n }.$$

\item \label{op:2} \textbf{Pre-process, skip regularization.} If we would like to avoid regularization (i.e., set $\lambda = 0$), we can do so by scaling the matrix $\mat{A}$ and right-hand side $\vec{b}$ so that the rows of $\mat{A}$ have equal or nearly equal norms.
In particular, if $4 \cdot \min_i \norm {\b a_i}^2 \ge \max_i\norm {\b a_i}^2,$ then we again obtain a controlled rate of convergence:
$$ \f {\tilde \mu } { \tilde \nu} \ge \f 1 {4n} \f {\sigma^2_{\rm min}(\mat A)}{\norm{\mat A}_{\rm F}^2}  = \f {\tilde \kappa(\mat A)^{-2}}{4n} \implies 1 - \f 1 2 \sqrt \f {\tilde \mu } { \tilde \nu} \le 1 - \f {1}{4 \tilde \kappa(\mat A)\sqrt {n} }.$$
\end{enumerate}
The conclusion is that, with either option \ref{op:1} or \ref{op:2}, ARK takes $\order(\tilde{\kappa}(\mat{A}) \sqrt{n})$ iterations to drive the error down by a constant factor.
The Demmel condition number satisfies the bound $\tilde{\kappa}(\mat{A}) \ge \sqrt{n}$, so the $\order(\tilde{\kappa}(\mat{A}) \sqrt{n})$ iteration cost of ARK never exceeds the $\order(\tilde{\kappa}(\mat{A})^2)$ iteration cost of standard randomized Kaczmarz.

\subsection{Finite-precision analysis}

We now present the formalized version of \cref{infthm:accelkz}, a general stability result for ARK.
\begin{theorem} [Suboptimal stability of ARK] \label{thm:accelkz}
    Let $\mat A \in \m R^{n \times n}$ be an invertible matrix, let $\b b \in \m R^{n}$ be a vector, and let $\b x \coloneqq  \mat {A}^{-1} \b b.$ Let $\widehat{\b x}_{k}$ be the output of $k$ iterations of regularized accelerated Kaczmarz with initialization $\b v_0 = \b y_0 = \b 0,$ executed in floating-point arithmetic with precision $u$.
     
    Furthermore, pick $\tilde \mu = \f {\sigma^2_{\rm min}(\mat A)} { \norm{\mat A}_{\rm F}^2 + n \lambda}$ and $\tilde \nu = \f {\norm{\mat A}_{\rm F}^2 + n \lambda}{\min_i \norm{\b a_i}^2 + \lambda},$ and assume that one of options \ref{op:1} and \ref{op:2} has been chosen.
     Finally assume that $\tilde \kappa(\mat A)^2 u \ll 1.$ Then it holds that
     $$ \m E \|\widehat{\b x}_k - \b x\| \le \left(1 - \f 1 {4 \tilde \kappa(\mat A) \sqrt{n}} \right)^{k/4}c \| \b x \| +  C\| \b x \| \cdot \tilde \kappa(\mat A)^2  u $$
     for some constant $c$ (independent of $n$) and some polynomial $C = C(n).$
     In particular, if we run for $k = \order(\tilde{\kappa}(\mat{A}) \sqrt n \log(1/u))$ iterations, then the output satisfies
\begin{equation*}
    \m E \| \widehat{\b x}_k - \b x\| \lesssim \norm{\vec{x}} \cdot \tilde{\kappa}(\mat{A})^2u.
\end{equation*}
\end{theorem}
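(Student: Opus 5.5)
We follow the template of the proof of \cref{thm:base_kz}: bound the rounding error committed in one ARK step, then feed it into the exact-arithmetic contraction \cite[Lemma 2.3]{derezinski2025randomized}. The difference is that ARK contracts a two-sequence Lyapunov function rather than $\norm{\cdot}$ itself. Define
\begin{equation*}
\Phi_k \coloneqq \norm{\b v_k - \b x}_{\overline{\b P}_\lambda^\dagger}^2 + \tfrac{1}{\tilde\mu}\norm{\b y_k - \b x}^2,
\end{equation*}
and work with $\Phi_k^{1/2}$, which is a norm of the stacked vector $(\b v_k - \b x, \b y_k - \b x)$. Since $\mat A$ is invertible, $\overline{\b P}_\lambda$ is positive definite. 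Its eigenvalues lie in $[\mu, \lambda_{\max}(\overline{\b P}_\lambda)]$ with $\lambda_{\max}\le 1$, and $\mu = \tilde\mu$ by \cref{lem:mu_nu}. Hence
\begin{equation*}
\norm{\b z}^2 \le \norm{\b z}^2_{\overline{\b P}_\lambda^\dagger} \le \tilde\mu^{-1}\norm{\b z}^2 ,
\end{equation*}
so a Euclidean perturbation of size $\delta$ to $\b v$ or $\b y$ changes $\Phi^{1/2}$ by at most $\tilde\mu^{-1/2}\delta$. Under either option~\ref{op:1} or~\ref{op:2}, $\tilde\mu^{-1} \asymp \tilde\kappa(\mat A)^2$.

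\textbf{Step 1 (single-step error).} Using \cref{thm:basic_stability}, as in \cref{lem:base_kz_error}, we show that the computed $\widehat{\b x}_k$, $\widehat{\b w}_k$, $\widehat{\b y}_{k+1}$, $\widehat{\b v}_{k+1}$ differ from the exact one-step images of $(\widehat{\b v}_k,\widehat{\b y}_k)$ by
\begin{equation*}
\lesssim \bigl(\norm{\b x} + \norm{\widehat{\b v}_k} + \norm{\widehat{\b y}_k}\bigr)\,u\cdot \max(1,\gamma)\cdot\epsilon .
\end{equation*}
Here $\epsilon$ stands for the factor $\gamma\norm{\widehat{\b w}_k}$ entering $\b v_{k+1}$. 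This term is the delicate one. We have $\gamma = 1/\sqrt{\tilde\mu\tilde\nu}$, and with the chosen parameters $\gamma\asymp\tilde\kappa(\mat A)/\sqrt n$ up to polynomial factors. Therefore the error injected into $\b v_{k+1}$ is of size $\gamma u(\norm{\b x}+\dots)$. The error from the division by $\norm{\b a_i}^2+\lambda$ is handled exactly as in \cref{lem:base_kz_error}, via $|b_i|\le\norm{\b a_i}\norm{\b x}$.

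\textbf{Step 2 (recursion).} We set $\phi_k \coloneqq \m E\,\widehat\Phi_k^{1/2}$. Jensen's inequality applied to \cite[Lemma 2.3]{derezinski2025randomized} gives the contraction factor $\rho \coloneqq (1-\tfrac12\sqrt{\tilde\mu/\tilde\nu})^{1/2} \le (1-\tfrac{1}{4\tilde\kappa\sqrt n})^{1/2}$. The triangle inequality in the $\Phi^{1/2}$-norm, together with Step 1 and the norm equivalence, then yields
\begin{equation*}
\phi_{k+1} \le \rho\,\phi_k + C\,\tilde\mu^{-1/2}\,\gamma\,\norm{\b x}\,u .
\end{equation*}
This holds under an inductive hypothesis $\m E\norm{\widehat{\b v}_k},\ \m E\norm{\widehat{\b y}_k}\lesssim\norm{\b x}$. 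Solving the inhomogeneous recursion as before gives
\begin{equation*}
\phi_k \le \rho^k\phi_0 + \frac{C\tilde\mu^{-1/2}\gamma\norm{\b x}u}{1-\rho}, \qquad 1-\rho \gtrsim \frac{1}{\tilde\kappa\sqrt n}.
\end{equation*}
Collecting powers, $\tilde\mu^{-1/2}\asymp\tilde\kappa$, $\gamma\lesssim\tilde\kappa$ and $(1-\rho)^{-1}\lesssim\tilde\kappa\sqrt n$, which naively produces a horizon of order $\tilde\kappa^3$. So the bookkeeping must be sharper. I would measure the $\b v$-error directly in the $\overline{\b P}_\lambda^\dagger$-norm rather than through the crude $\tilde\mu^{-1/2}$ bound. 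The perturbation $\gamma\,\err(\b w_k)$ points along $\b a_{r(k)}$, so I would average over $r(k)$ using $\m E\,\b a_i^\top\overline{\b P}_\lambda^\dagger\b a_i/(\norm{\b a_i}^2+\lambda)=n$. This is intended to make the effective per-step error in $\Phi^{1/2}$ of order $\gamma\sqrt n\,u\norm{\b x}$, which after multiplying by $(1-\rho)^{-1}$ gives $\tilde\kappa^2u\norm{\b x}$ up to polynomial factors. I expect this to be the main obstacle. It is also where the power $k/4$ arises, since the contraction is on the squared potential and the square root is taken twice in passing through $\Phi$ and Jensen.

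\textbf{Step 3 (conclusion).} We convert back using $\norm{\widehat{\b x}_k-\b x}\le \alpha\norm{\widehat{\b v}_k-\b x}+(1-\alpha)\norm{\widehat{\b y}_k-\b x} \le \tilde\mu^{1/2}\Phi_k^{1/2}$ plus one rounding term. The initial value satisfies $\Phi_0^{1/2}\lesssim\tilde\mu^{-1/2}\norm{\b x}$, and the factor $\tilde\mu^{\pm1/2}$ cancels, leaving $c\norm{\b x}$. Finally, the assumption $\tilde\kappa^2u\ll1$ closes the induction on $\m E\norm{\widehat{\b v}_k}$ and $\m E\norm{\widehat{\b y}_k}$, and taking $k=\order(\tilde\kappa\sqrt n\log(1/u))$ gives the stated consequence.
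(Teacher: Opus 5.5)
\textbf{Verdict.} Your architecture is the paper's, but Step~2 contains a scale slip that sends you after an obstacle that does not exist. The parts that match: $\Phi_k$ is its $\Delta_k$, the one-step perturbations have Euclidean size $\norm{\b x}u$ in $\b y$ and $\gamma\norm{\b x}u$ in $\b v$ and enter the potential via $\norm{\overline{\b P}_\lambda^\dagger}\le 1/\tilde\mu$, the contraction is \cite[Lemma 2.3]{derezinski2025randomized}, and the conversion back uses $\alpha\le\sqrt{\tilde\mu}$.

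\textbf{The scale slip.} The $\tilde\kappa^3$ you compute bounds $\m E\,\Phi_k^{1/2}$, not the forward error. Your own Step~3 multiplies it by $\tilde\mu^{1/2}$, which gives
\begin{equation*}
\tilde\mu^{1/2}\cdot\frac{C\tilde\mu^{-1/2}\gamma\norm{\b x}u}{1-\rho}\le\frac{4C\gamma\norm{\b x}u}{\sqrt{\tilde\mu/\tilde\nu}}=\frac{4C\norm{\b x}u}{\tilde\mu}\asymp\tilde\kappa(\mat A)^2\norm{\b x}u ,
\end{equation*}
using $1-\rho\ge\frac14\sqrt{\tilde\mu/\tilde\nu}$ and $\gamma=(\tilde\mu\tilde\nu)^{-1/2}$. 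That is precisely the paper's proof. Its $\sqrt d=C\norm{\b x}u/(\tilde\mu\sqrt{\tilde\nu})$ is your crude per-step term, its horizon $e_{\rm fix}=d/(1-\sqrt r)^2$ is the square of yours, and it finishes by multiplying by $\alpha^2+\tilde\mu\lesssim\tilde\mu$. So the averaging over $r(k)$, which you flag as the main obstacle, is unnecessary. Your claim that it yields $\tilde\kappa^2u\norm{\b x}$ repeats the same slip: pushed through Step~3 it would give order $\tilde\kappa\sqrt n\,u\norm{\b x}$, stronger than the theorem. It is a possible refinement, not a missing step.

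\textbf{The inductive hypothesis.} Your recursion cannot close $\m E\norm{\wb v_k}\lesssim\norm{\b x}$. With $\b v_0=\b 0$ you have $\Phi_0\le 2\norm{\b x}^2/\tilde\mu$. Moreover, $\Phi^{1/2}$ controls only $\norm{\wb v_k-\b x}\le\norm{\wb v_k-\b x}_{\overline{\b P}_\lambda^\dagger}$. So the best available bound is $\m E\norm{\wb v_k}\lesssim\tilde\mu^{-1/2}\norm{\b x}$, which is why the paper assumes $\m E\norm{\wb v_k}^2\lesssim\norm{\b x}^2/\tilde\mu$.

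Under that weaker hypothesis, your Step~1 bound, which multiplies $\norm{\wb v_k}$ by $\gamma$, would cost an extra factor $\tilde\kappa$. The fix is to note two things:
\begin{itemize}
\item $\gamma$ multiplies only the error of $\wb w_k$. By \cref{lem:wk} this depends on $\norm{\b x}+\norm{\wb x_k}$, and $\m E\norm{\wb x_k}\lesssim\norm{\b x}$ because $\norm{\wb x_k-\b x}\lesssim\tilde\mu^{1/2}\Phi_k^{1/2}$ up to rounding.
\item The unamplified rounding of the $\b v$-update costs $\norm{\wb v_k}u\lesssim\tilde\mu^{-1/2}\norm{\b x}u=\sqrt{\tilde\nu}\,\gamma\norm{\b x}u$. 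This is within the polynomial factor $\sqrt{\tilde\nu}\lesssim\sqrt n$ of the amplified term.
\end{itemize}

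\textbf{Comparison once corrected.} With these two corrections your argument is complete. Your linear recursion for $\m E\,\Phi_k^{1/2}$ (triangle inequality plus conditional Jensen) is tidier than the paper's route. The paper's inequality $e_{k+1}\le re_k+2\sqrt{dre_k}+d=(\sqrt{re_k}+\sqrt d)^2$ is the square of yours, and the paper solves it with a fixed-point/Lipschitz argument. Your version needs only first moments in the induction. It also gives decay $r^{k/2}$ for $\m E\norm{\wb x_k-\b x}$, which implies the stated $k/4$ exponent. That exponent is an artifact of the paper squaring and later taking a square root, not something your route has to reproduce.
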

Again, similarly to \cref{thm:base_kz}, the expected forward error decays up to a convergence horizon proportional to $\norm{\b x} \cdot \tilde \kappa(\mat A)^2 u.$ Thus, ARK is \textit{not} forward stable. As before, iterative refinement helps reduce the convergence horizon term. The following result is the formalized version of \cref{infthm:accelkz_ir}.
\begin{theorem} [Stability of ARK with iterative refinement] \label{thm:accelkz_ir}
    Let $\mat A \in \m R^{n \times n}$ be invertible, let $\b b \in \m R^{n}$ be a vector, and let $x = \mat {A}^{-1} \b b.$
    
     Pick $\tilde \mu = \f {\sigma^2_{\rm min}(\mat A)} { \norm{\mat A}_{\rm F}^2 + n \lambda}$ and $\tilde \nu = \f {\norm{\mat A}_{\rm F}^2 + n \lambda}{\min_i \norm{\b a_i}^2 + \lambda},$ and assume that one of options \ref{op:1} and \ref{op:2} has been chosen.
    Suppose that we perform iterative refinement $t = \log_2\left(\f 1 {\tilde \kappa(\mat A) u} \right)$ times, where each refinement executes $\order(\tilde \kappa(\mat {A})\sqrt n)$ iterations of ARK. If $\tilde \kappa(\mat{A})^2 u \ll 1$, then this scheme attains forward stability:
    $$\m E \norm{\wb x_t - \b x } \lesssim \norm{\b x} \cdot \tilde{\kappa}(\mat{A}) u.$$
\end{theorem}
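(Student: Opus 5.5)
The plan mirrors the proof of \cref{thm:base_kz_ir}: treat ARK as the black-box \textsc{Solver} in \cref{thm:refinement_exp}, with \cref{thm:accelkz} supplying the contraction hypothesis \cref{eq:ir_condition}. Each refinement solves $\mat{A}\vec{e} = \wb r_{i}$ from the zero initialization $\b v_0 = \b y_0 = \b 0$, exactly as in the hypotheses of \cref{thm:accelkz}. Applied to the right-hand side $\wb r_i$ (with exact solution $\mat{A}^{-1}\wb r_i$), \cref{thm:accelkz} gives
\begin{equation*}
\m E \norm{\Call{Solver}{\wb r_i} - \mat{A}^{-1}\wb r_i} \le \Bigl[ c\Bigl(1 - \tfrac{1}{4\tilde\kappa(\mat A)\sqrt n}\Bigr)^{k_i/4} + C_1 \tilde\kappa(\mat A)^2 u \Bigr] \norm{\mat{A}^{-1}\wb r_i}.
\end{equation*}
The bracket serves as $q$ in \cref{thm:refinement_exp}. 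The key point is that the right-hand side of \cref{thm:accelkz} is relative to the norm of the current solution, so the bound holds uniformly over all inputs, as \cref{eq:ir_condition} requires. Since $\mat{A}$ is square and invertible, $\mat{A}^+ = \mat{A}^{-1}$ and every right-hand side lies in the range. The solver is also applied conditionally on $\wb r_i$, which is independent of the fresh random row choices, so the conditional expectation bound integrates to the unconditional one used in that proof.

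Next I choose $k_i$. Let $C_2$ be the polynomial from \cref{thm:refinement_exp}. I need $(1+C_2u)q < 1/2$, which holds if
\begin{equation*}
c\Bigl(1 - \tfrac{1}{4\tilde\kappa(\mat A)\sqrt n}\Bigr)^{k_i/4} < \frac{1}{2(1+C_2u)} - C_1\tilde\kappa(\mat A)^2u .
\end{equation*}
Under $\tilde\kappa(\mat A)^2u \ll 1$, the right side is at least, say, $1/4$. Taking logarithms, it suffices that
\begin{equation*}
k_i > 4\,\frac{\log(4c)}{-\log\bigl(1 - \frac{1}{4\tilde\kappa(\mat A)\sqrt n}\bigr)} = \order(\tilde\kappa(\mat A)\sqrt n),
\end{equation*}
using $-\log(1-x)\ge x$. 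Because $c$ is independent of $n$, this count is $\order(\tilde\kappa(\mat A)\sqrt n)$ with an absolute constant. The second condition, $C_2\tilde\kappa(\mat A)u < 3/4$, follows from $\tilde\kappa(\mat A)^2 u \ll 1$ together with $\tilde\kappa(\mat A)\ge 1$.

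Finally, \cref{thm:refinement_exp} yields
\begin{equation*}
\m E\norm{\wb x_t - \b x} \le \bigl(C_2\tilde\kappa(\mat A)u + 2^{-t}\bigr)\norm{\b x},
\end{equation*}
where I have used $(1+C_2u)^t q^{t+1} \le 2^{-t}$. With $t = \log_2(1/(\tilde\kappa(\mat A)u))$ this is at most $\norm{\b x}\cdot\tilde\kappa(\mat A)u$ up to a polynomial factor, which is the claimed bound.

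The main obstacle is checking that \cref{thm:accelkz} really holds for an arbitrary floating-point input $\wb r_i$, not just the original $\b b$. I would first confirm that its proof uses only exact representability of the input and the relative scaling by $\norm{\mat A^{-1}\b b}$. I also need the constant $c$ to be genuinely uniform, so that $q$ does not depend on the refinement index.
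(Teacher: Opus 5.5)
Your proposal is correct and follows the paper's proof. You take the bracket from \cref{thm:accelkz} as $q$ in \cref{thm:refinement_exp}, choose $k_i = \order(\tilde\kappa(\mat A)\sqrt n)$ so that $(1+C_2u)q<1/2$, and obtain $C_2\tilde\kappa(\mat A)u<3/4$ from $\tilde\kappa(\mat A)^2u \ll 1$. The side points you flag are left implicit in the paper: that \cref{thm:accelkz} applies to any exactly represented right-hand side such as $\wb r_i$, that the fresh row samples are independent of $\wb r_i$, and that $c$ is uniform across refinements. They hold as you describe, since \cref{thm:accelkz} is stated for an arbitrary $\b b$ with zero initialization.
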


Observe that $\kappa(\mat{A})^2u \ll 1$ is needed in this theorem because outside of this regime, we cannot expect ARK itself to make much progress. Inside the regime, we obtain forward stability, with error proportional to the desired $\norm{\b x} \tilde \kappa(\mat A) u.$ Here, we require
$$ \order(\tilde \kappa(\mat {A})\sqrt n) \cdot \left(\log_2\left( \tfrac 1 {\tilde{\kappa}(\mat{A}) u} \right) + 1 \right)$$
ARK iterations to do so. In comparison, ARK without iterative refinement needs 
$$ 4 \f {\log \left(\tilde \kappa(\mat{A})^2 u \right)} { \log \left(1 - \f 1 {4 \tilde \kappa(\mat{A})\sqrt n} \right) } = \order( \tilde \kappa(\mat A) \sqrt n) \cdot \log \left( \f 1 { \tilde \kappa(\mat A)^2 u} \right)$$
iterations to achieve its convergence horizon of $\norm{\b x} \tilde \kappa(\mat A)^2 u.$ Thus, the iterative refinement scheme requires roughly the same order of magnitude of ARK iterations to yield a significantly better result, despite the fact that the two methods are precisely the same in exact arithmetic. See \cref{lem:ir_accelkz_exact} for details.

Recall that randomized Kaczmarz with iterative refinement requires $\order(\tilde \kappa(\mat A)^2)$ Kaczmarz iterations (see \eqref{eq:kz_ir_iterations}), significantly more than accelerated Kaczmarz. Furthermore, as an additional comparison between ARK with refinement and randomized Kaczmarz with refinement, we will see experimentally that the former appears to achieve forward stability in some cases even when $\tilde \kappa(\mat A)^2 u > 1$; see \cref{fig:testsuite1e5}. The latter method did not replicate the same stability.

To prove \cref{thm:accelkz}, we begin by proving a result on the floating-point error induced by computing 
\begin{align*}
{\b w}_k &= \f {\langle \b a, {\b x}_k \rangle - b} { \| \b a \| ^2 + \lambda} \b a
\end{align*}
in finite precision.
Here, we have set $\b a\coloneqq  \b a_{r(k)}$ to be the row that we iterate on, and we set $b \coloneqq  b_{r(k)}$.
In addition, recall our notational convention. We use $\hspace{0.5em} \widehat{}\hspace{0.5em}$ for the floating-point outputs of the algorithm, e.g. the algorithm will produce in floating-point arithmetic the iterates $\wb x_0, \wb w_0, \widehat{\b y}_1, \widehat{\b v}_1, \dots $. On the other hand, we use $\hspace{0.5em}\tilde{}\hspace{0.5em}$ for the exact outputs on the previous floating-point iterates. That is, given the current floating-point iterates $\widehat{\b y}_k, \widehat{\b v}_k$, the next iterates are computed in exact arithmetic and denoted by $\tilde {\b x}_k, \tb w_k, \tilde {\b y}_{k+1}, \tilde {\b v}_{k+1}.$

\begin{lemma} \label{lem:wk}
Computing $\b w_k = \f {\langle \b a, \b x_k \rangle - b} { \| \b a \| ^2 + \lambda} \b a$ in finite precision yields an error of 
$$ \| \wb w_k - \tb w_k \| \lesssim (\| \b x \| + \| \wb x_k \|) u.$$
\end{lemma}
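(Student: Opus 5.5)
The plan is to mirror the proof of \cref{lem:base_kz_error}. We track the floating-point error through each elementary operation in evaluating $\b w_k$ and bound each contribution with \cref{thm:basic_stability}. Here $\wb x_k$ is the already computed (exactly represented) input, and $\tb w_k$ is the exact value of the formula at that input. So the only errors come from evaluating the formula itself.

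The computation splits into four stages.
\begin{enumerate}[label=(\roman*)]
\item The inner product $s = \langle \b a, \wb x_k\rangle$, with $|\err(s)| \lesssim \norm{\b a}\norm{\wb x_k} u$.
\item The subtraction $s - b$. Its error is at most $u|\hat s - b|$ plus the propagated error from (i), so it is $\lesssim (\norm{\b a}\norm{\wb x_k} + |b|)u$.
\item The denominator $d = \norm{\b a}^2 + \lambda$. Both terms are nonnegative, so the relative error is $\lesssim u$, and the relative error of the quotient $(s-b)/d$ is also $\lesssim u$.
\item The scalar--vector product with $\b a$, which adds a further relative error of order $u$.
\end{enumerate}
Multiplying out the $(1+O(u))$ factors under $u \ll 1$, the total error is bounded by
\begin{equation*}
    \norm{\wb w_k - \tb w_k} \lesssim \frac{\norm{\b a}\big(|b| + \norm{\b a}\norm{\wb x_k}\big)}{\norm{\b a}^2 + \lambda}\, u .
\end{equation*}
Any $\lambda$ itself being rounded, or $\tilde\mu$, $\tilde\nu$ and the other parameters being computed in floating point, can be absorbed the same way.

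Next we use consistency. Since $\mat A \b x = \b b$, we have $|b| = |\langle \b a, \b x\rangle| \le \norm{\b a}\norm{\b x}$. The numerator is therefore at most $\norm{\b a}^2(\norm{\b x} + \norm{\wb x_k})$. The key observation is that $\lambda \ge 0$ gives $\norm{\b a}^2/(\norm{\b a}^2+\lambda) \le 1$. The regularization can only shrink the factor, so the bound $(\norm{\b x} + \norm{\wb x_k})u$ follows.

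The only step needing care is (ii): the subtraction may suffer cancellation, so its error must be measured against the sizes of the operands rather than the result. This is handled by bounding $|\hat s| \le \norm{\b a}\norm{\wb x_k}(1+O(u))$ before applying the standard model.
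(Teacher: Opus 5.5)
Your proposal is correct and follows the same route as the paper. The paper's one-line proof applies \cref{thm:basic_stability} to get the bound $\frac{\norm{\b a}\,|b| + \norm{\b a}^2\norm{\wb x_k}}{\norm{\b a}^2+\lambda}\,u$, then uses $\lambda \ge 0$ together with the consistency bound $|b| \le \norm{\b a}\norm{\b x}$; you simply spell out the stage-by-stage error tracking, including the cancellation-safe treatment of the subtraction, which the paper compresses into a single step.
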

\begin{proof} 
Straightforwardly applying the basic stability results in \cref{thm:basic_stability} yields
\begin{align*} \| \wb w_k - \tb w_k \| \lesssim \f {  \| \b a \| |b| +  \| \b a \|^2 \| \tb x_k \|   } {\|\b a \|^2 + \lambda} u \le \left(\f {\norm{b} + \norm{\tb x_k}}{\norm{\b a}}\right) u \le (\norm{\b x_k} + \norm{\wb x}) u.\end{align*}
\end{proof}

\cref{lem:wk} can be seen as a sort of analogue to \cref{lem:base_kz_error}, in that an error with magnitude proportional to $(\norm{\b x} + \norm{\wb x}) u$ is incurred. However, in the accelerated setting, $\b w_k$ is scaled by the potentially large $\gamma$, which amplifies the error incurred by each iteration of the accelerated method.
Next, we compute this error by proving a recursive result on the expected norm of $\Delta_k \coloneqq  \| \widehat{\b v}_k - \b x\|_{\overline {\b P}_\lambda^\dagger}^2 + \f 1 {\tilde \mu} \| \widehat{\b y}_k - \b x \|^2.$

\begin{lemma}[ARK: Single iteration] \label{lem:accelvy}
    Let $\wb y_k$ and $\wb v_k$ be the current ARK iterates.
    Suppose the assumptions $ \m E \| \widehat{\b v}_k \|^2 \lesssim \f 1 {\tilde \mu} \| \b x \|^2$, $ \m E \| \widehat {\b y}_k \|^2 \lesssim \| \b x\|^2$ hold. In addition, suppose that $\tilde \mu \tilde \nu \le 1$, so that $\gamma \ge 1.$
    
    Then
    $$ \m E[ \Delta_{k+1}] \le \left(1 - \f 1 2 \sqrt{ \f {\tilde \mu}{\tilde \nu}} \right) \Delta_k + \left(1 - \f 1 2 \sqrt{\f {\tilde \mu}{\tilde \nu}}\right)^{1/2} \f { 2 C \norm{\b x } u}{\tilde \mu \sqrt{\tilde \nu}} \sqrt{\Delta_k} + \f 1 {\tilde \mu^2 \tilde \nu } C^2 \| \b x \|^2 u^2,$$
    for some $C(n)$ that is polynomial in $n$.
\end{lemma}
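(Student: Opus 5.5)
The plan is to split the floating-point update into its exact part plus a perturbation, and then use the exact-arithmetic contraction \cite[Lemma 2.3]{derezinski2025randomized}. Given the current floating-point iterates $\wb y_k,\wb v_k$, write
\begin{equation*}
\wb y_{k+1} = \tilde{\b y}_{k+1} + \b e^y_k, \qquad \wb v_{k+1} = \tilde{\b v}_{k+1} + \b e^v_k .
\end{equation*}
Here $\tilde{\b y}_{k+1},\tilde{\b v}_{k+1}$ are the exact ARK updates applied to $\wb y_k,\wb v_k$. Set $\tilde\Delta_{k+1} \coloneqq \norm{\tilde{\b v}_{k+1}-\b x}^2_{\overline{\b P}_\lambda^\dagger} + \tilde\mu^{-1}\norm{\tilde{\b y}_{k+1}-\b x}^2$. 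The exact recursion applied with initial data $\wb y_k,\wb v_k$ gives
\begin{equation*}
\m E[\tilde\Delta_{k+1}\mid \wb y_k,\wb v_k] \le \rho\,\Delta_k, \qquad \rho \coloneqq 1-\tfrac12\sqrt{\tilde\mu/\tilde\nu}.
\end{equation*}
Expanding the square in the norm $\norm{\cdot}_{\mat M}$ induced by the block operator $\mat M=\operatorname{diag}(\overline{\b P}_\lambda^\dagger,\tilde\mu^{-1}\mat I)$ yields
\begin{equation*}
\Delta_{k+1} \le \tilde\Delta_{k+1} + 2\sqrt{\tilde\Delta_{k+1}}\,E_k + E_k^2, \qquad E_k^2 \coloneqq \norm{\b e^v_k}^2_{\overline{\b P}_\lambda^\dagger} + \tilde\mu^{-1}\norm{\b e^y_k}^2 .
\end{equation*}
For the cross term, Cauchy--Schwarz and Jensen give $\m E\sqrt{\tilde\Delta_{k+1}}\,E_k \le \sqrt{\rho\Delta_k}\cdot(\text{bound on }E_k)$. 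This produces exactly the three terms in the statement, provided $E_k \le C\norm{\b x}u/(\tilde\mu\sqrt{\tilde\nu})$, either deterministically or in the appropriate mean-square sense.

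So the work reduces to bounding the local errors. The term $\b e^y_k$ comes from forming $\wb x_k=\fl(\alpha\wb v_k+(1-\alpha)\wb y_k)$, from $\wb w_k$ (via \cref{lem:wk}), and from the subtraction. Since $\alpha=1/(1+\gamma\tilde\nu)$ and the hypotheses give $\norm{\wb v_k}\lesssim\tilde\mu^{-1/2}\norm{\b x}$, we get
\begin{equation*}
\alpha\norm{\wb v_k} \lesssim \frac{\sqrt{\tilde\mu\tilde\nu}}{\tilde\nu}\,\tilde\mu^{-1/2}\norm{\b x} = \tilde\nu^{-1/2}\norm{\b x}\le\norm{\b x},
\end{equation*}
using $\tilde\nu\ge1$. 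Hence $\norm{\wb x_k}\lesssim\norm{\b x}$, the error in forming $\wb x_k$ is $\lesssim(\norm{\wb v_k}\alpha+\norm{\wb y_k})u\lesssim\norm{\b x}u$, and \cref{lem:wk} gives $\norm{\wb w_k-\tb w_k}\lesssim\norm{\b x}u$. The point to be careful about is that the exact update uses the exact $\tilde{\b x}_k$, not $\wb x_k$; the difference propagates linearly through $\b w$, whose defining map has norm at most $1$ because $\b P_{\lambda,i}\preceq\mat I$. Thus $\norm{\b e^y_k}\lesssim\norm{\b x}u$ and $\tilde\mu^{-1}\norm{\b e^y_k}^2\lesssim\tilde\mu^{-1}\norm{\b x}^2u^2$.

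The term $\b e^v_k$ collects the rounding in $\beta\wb v_k$ (size $\lesssim\tilde\mu^{-1/2}\norm{\b x}u$), in $(1-\beta)\wb x_k$, and the dominant term $\gamma(\wb w_k-\tb w_k)$ plus the rounding of $\gamma\wb w_k$. Together these give $\norm{\b e^v_k}\lesssim(\gamma+\tilde\mu^{-1/2})\norm{\b x}u\lesssim\gamma\norm{\b x}u$, using $\gamma\ge1$ and $\gamma=1/\sqrt{\tilde\mu\tilde\nu}\ge\tilde\mu^{-1/2}$ when $\tilde\nu\le 1/\tilde\mu$. With $\norm{\overline{\b P}_\lambda^\dagger}=1/\mu\le1/\tilde\mu$ this gives
\begin{equation*}
\norm{\b e^v_k}^2_{\overline{\b P}_\lambda^\dagger} \lesssim \frac{\norm{\b x}^2u^2}{\tilde\mu^2\tilde\nu},
\end{equation*}
which is the claimed scale. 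It dominates the $y$-contribution, because $\tilde\mu^{-1}\le\tilde\mu^{-2}\tilde\nu^{-1}$ is equivalent to $\tilde\mu\tilde\nu\le1$. Therefore $E_k\le C\norm{\b x}u/(\tilde\mu\sqrt{\tilde\nu})$.

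The main obstacle is the bookkeeping that makes the hypotheses usable, because they are stated in expectation and not almost surely. The local bounds are linear in $\norm{\wb v_k},\norm{\wb y_k}$, so I would bound $\m E E_k^2$ by $\m E\norm{\wb v_k}^2$ and $\m E\norm{\wb y_k}^2$, and handle the cross term with $\m E[\sqrt{\tilde\Delta_{k+1}}E_k]\le(\m E\tilde\Delta_{k+1})^{1/2}(\m E E_k^2)^{1/2}$. This gives the statement with $\Delta_k$ read in expectation; I would interpret the lemma's $\Delta_k$ accordingly, or condition on $\wb y_k,\wb v_k$ and use the deterministic local bounds. A second care point is that $\overline{\b P}_\lambda^\dagger$-norms of vectors are controlled through $\norm{\overline{\b P}_\lambda^\dagger}\le1/\tilde\mu$ via \cref{lem:mu_nu}.
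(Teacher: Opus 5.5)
Your architecture is the paper's. You split each floating-point update into the exact ARK step from $(\wb y_k,\wb v_k)$ plus a perturbation, apply \cite[Lemma 2.3]{derezinski2025randomized} to the exact part, and control the cross term by Cauchy--Schwarz. Working in the block norm $\norm{\cdot}_{\mat M}$ is a tidier version of the paper's separate treatment of the terms (A) and (B) followed by $\sqrt a+\sqrt b\le\sqrt{2(a+b)}$, and it reproduces exactly the constants in the statement. You are also more careful than the paper in two places: $\wb x_k$ versus $\tilde{\b x}_k$ (via $\alpha\norm{\wb v_k}\lesssim\tilde\nu^{-1/2}\norm{\b x}$ and $\norm{\b P_{\lambda,i}}\le 1$), and conditional versus total expectation.

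The step that fails is the bound on $\b e^v_k$. The inequality $\gamma=1/\sqrt{\tilde\mu\tilde\nu}\ge\tilde\mu^{-1/2}$ is equivalent to $\tilde\nu\le 1$, not to $\tilde\nu\le 1/\tilde\mu$; the latter only gives $\gamma\ge 1$. In fact $\tilde\nu\ge 1$, which you used one paragraph earlier, and with the parameters of \cref{lem:mu_nu} even $\tilde\nu\ge n$. So the true direction is $\gamma\le\tilde\mu^{-1/2}$.

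Consequently the rounding error of $\beta\wb v_k$ dominates $\b e^v_k$. Under your hypothesis it has size $\lesssim\tilde\mu^{-1/2}\norm{\b x}u=\sqrt{\tilde\nu}\,\gamma\norm{\b x}u$. Since nothing keeps a rounding error away from the small eigendirections of $\overline{\b P}_\lambda$, the bound $\norm{\overline{\b P}_\lambda^\dagger}\le 1/\tilde\mu$ only yields $\norm{\b e^v_k}^2_{\overline{\b P}_\lambda^\dagger}\lesssim\tilde\mu^{-2}\norm{\b x}^2u^2$. That is a factor $\tilde\nu$ above the claimed scale $\tilde\mu^{-2}\tilde\nu^{-1}\norm{\b x}^2u^2$. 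Your conclusion $E_k\le C\norm{\b x}u/(\tilde\mu\sqrt{\tilde\nu})$ with $C$ polynomial in $n$ therefore does not follow from $\tilde\mu\tilde\nu\le 1$ and the two moment bounds alone.

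You need an upper bound on $\tilde\nu$. In the setting where the lemma is used (parameters from \cref{lem:mu_nu} with option~\ref{op:1} or~\ref{op:2}), $\tilde\nu\le 2n$ or $\tilde\nu\le 4n$ respectively. Hence $\tilde\mu^{-1/2}\le 2\sqrt n\,\gamma$, and the extra factor is absorbed into $C(n)$. With $\tilde\nu\lesssim 1$ (polynomial in $n$) made an explicit hypothesis, your argument goes through. The paper's own proof asserts $\m E\norm{\delta_3}^2\lesssim\gamma^2\norm{\b x}^2u^2$ without addressing the $\beta\wb v_k$ rounding, so it tacitly relies on the same absorption.
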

\begin{proof}

    First note that from basic stability results (\cref{thm:basic_stability}), we deduce
    $ \wb x_k = \tb x_k + \delta_1,$ where $\delta_1$ satisfies $\m E\norm{\delta_1}^2 \lesssim \norm{\b x}^2 u^2$.
    
    Next, observe
    $$ \widehat{ \b y}_{k+1} = \fl (\wb x_k - \wb w_k) = \tb x_k - \tb w_k + \delta_2,$$
    where $\delta_2 = (\wb x_k - \tb x_k) + (\wb w_k - \tb w_k) + \err(\wb x_k - \wb w_k)$ satisfies
    $\m E \norm{\delta_2}^2 \lesssim \norm{\b x}^2 u^2$ via an application of \cref{lem:wk}.
    
    Similarly,
    $$ \widehat{ \b v}_{k+1} = \fl(\beta \widehat{\b v}_k + (1 - \beta) \wb x_k - \gamma \widehat {\b w}_k) =  \beta \widehat{\b v}_k + (1 - \beta) \tb x_k - \gamma \tilde {\b w}_k + \delta_3,$$
    where
    $\delta_3 = (1 - \beta)(\wb x_k - \tb x_k) + (\gamma \tb w_k - \gamma \wb w_k) + \err(\beta \widehat{\b v}_k + (1 - \beta) \wb x_k - \gamma \widehat {\b w}_k)$ satisfies $\m E\norm{\delta_3}^2 \lesssim \gamma^2 \norm{\b x}^2 u^2$, again via an application of \cref{lem:wk}.

    Fix a $C(n)$ polynomial in $n$ such that $\m E \| \delta_2 \|^2 \le (C \| \b x \|u / \sqrt 2)^2$ and $ \m E \| \delta_3 \|^2 \le  (C \gamma\| \b x \|u / \sqrt 2)^2 .$ Now, since 
    $$ \wb y_{k+1} = \tb y_{k+1} + \delta_2, \enspace \wb v_{k+1} = \tb v_{k+1} + \delta_3,$$
    we have
    \begin{align*}
        \m E &[ \Delta_{k+1} ] 
        = \m E \left[ \| \tilde{\b v}_{k+1} - \b x + \delta_3 \|_{\overline {\b P}_\lambda^\dagger}^2 + \f 1 {\tilde \mu} \| \tilde{\b y}_{k+1} - \b x + \delta_2\|^2 \right], 
\end{align*}
which, after expanding and applying \cite[Lemma 2.3]{derezinski2025randomized}, satisfies the bound
\begin{multline}
        \m E [ \Delta_{k+1} ] \le \left(1 - \f 1 2 \sqrt{ \f {\tilde \mu}{\tilde \nu}} \right) \Delta_k + \m E \big[\| \delta_3 \|_{\overline {\b P}_\lambda^\dagger}^2\big] \\ +  \underbrace{\m E \big[2\|\delta_3 \|_{\overline {\b P}_\lambda^\dagger}  
        \| \tilde {\b v}_{k+1} - \b x \| _{\overline{\b P}_\lambda^\dagger}\big]}_{\text{(A)}} + \m E \big[ \tilde{\mu}^{-1} \| \delta_2 \|^2 \big] + \underbrace{\m E \big[2\tilde{\mu}^{-1} \| \delta_2 \| \| \tilde {\b y}_{k+1} - \b x \|\big]}_{\text{(B)}}. \label{eq:Delta_k+1}
\end{multline}
    For (A) and (B), we use Cauchy-Schwarz for random variables $X, Y$:
    $ \m E[XY] \le \sqrt{\m E[X^2]} \sqrt{ \m E[Y^2]}.$ 
    We deduce for (A) that 
    \begin{align*}
            \m E \left [2\|\delta_3 \|_{\overline {\b P}_\lambda^\dagger}  
        \| \tilde {\b v}_{k+1} - \b x \| _{\overline{\b P}_\lambda^\dagger}\right ]
         \le  \f {\sqrt 2 C \gamma \norm{\b x} u}{\sqrt{\tilde \mu}}  \sqrt{ \m E 
        \| \tilde {\b v}_{k+1} - \b x \|_{\overline{\b P}_\lambda^\dagger}^2},
    \end{align*}
    since $\norm{\overline{\b P}_\lambda^\dagger} \le 1 / \tilde \mu.$
    For (B), since we assume that $\gamma \ge 1,$
    \begin{align*}
        \m E\left [\f 2 {\tilde \mu} \| \delta_2 \| \| \tilde {\b y}_{k+1} - \b x \| \right ]
         \le \f {\sqrt 2 C \norm{\b x} u }{\tilde \mu} \sqrt{ \m E \| \tilde{\b y}_{k+1} - \b x\|^2} 
         \le \f {\sqrt 2 C \gamma \norm{\b x} u }{\tilde \mu} \sqrt{ \m E \| \tilde{\b y}_{k+1} - \b x\|^2}.
    \end{align*}
     We see that (A) + (B) satisfies
    \begin{align*}
        \m E  &\left [2\|\delta_3 \|_{\overline {\b P}_\lambda^\dagger}  
        \| \tilde {\b v}_{k+1} - \b x \| _{\overline{\b P}_\lambda^\dagger}\right ] + \m E\left [\f 2 {\tilde \mu} \| \delta_2 \| \| \tilde {\b y}_{k+1} - \b x \| \right ] 
        \\ & \le \f {\sqrt 2 C \gamma \norm{\b x} u}{\sqrt{\tilde \mu}} \left( \sqrt{ \m E 
        \| \tilde {\b v}_{k+1} - \b x \|_{\overline{\b P}_\lambda^\dagger}^2} + \f 1 {\sqrt{\tilde \mu}} \sqrt{ \m E \| \tilde{\b y}_{k+1} - \b x\|^2} \right) \\
        & {\le} \f {\sqrt 2 C \gamma \norm{\b x} u}{\sqrt{\tilde \mu}} \sqrt{2 \left ( { \m E 
        \| \tilde {\b v}_{k+1} - \b x \|_{\overline{\b P}_\lambda^\dagger}^2} + 
        \f 1 {\tilde \mu} { \m E \| \tilde{\b y}_{k+1} - \b x\|^2} \right)} \\ 
        &\le \left(1 - \f 1 2 \sqrt{ \f {\tilde \mu}{\tilde \nu}} \right)^{1/2} \f {2 C \gamma \norm {\b x} u}{\sqrt{\tilde \mu}} \sqrt{\Delta_k}.
    \end{align*}
        
    Thus, for the expectation in~\eqref{eq:Delta_k+1}, we have
    \begin{align*} 
    \m E &\left[\| \delta_3 \|_{\overline {\b P}_\lambda^\dagger}^2 + 2 \|\delta_3 \|_{\overline {\b P}_\lambda^\dagger}  
    \| \tilde {\b v}_{k+1} - \b x \| _{\overline{\b P}_\lambda^\dagger} + \f 1 {\tilde \mu} \| \delta_2 \|^2 + \f 2 {\tilde \mu} \| \delta_2 \| \| \tilde {\b y}_{k+1} - \b x \|\right] \\
    &\le \m E \left [ \| \delta_3 \|_{\overline {\b P}_\lambda^\dagger}^2 + \f 1 {\tilde \mu} \| \delta_2 \|^2 \right ] + \left(1 - \f 1 2 \sqrt{ \f {\tilde \mu}{\tilde \nu}} \right)^{1/2} \f {2 C \gamma \norm {\b x} u}{\sqrt{\tilde \mu}} \sqrt{\Delta_k}
    \\ 
    & \le \f {C^2 \gamma^2 \norm{\b x}^2 u^2} 2 \| \overline{\b P}_\lambda^\dagger\| + \f 1 {\tilde \mu} \f {C^2 \norm{\b x}^2 u^2} 2 + \left(1 - \f 1 2 \sqrt{ \f {\tilde \mu}{\tilde \nu}} \right)^{1/2} \f {2 C \gamma \norm {\b x} u}{\sqrt{\tilde \mu}} \sqrt{\Delta_k}
    \\ 
    &\le \f 1 {\tilde \mu} C^2 \gamma^2 \| \b x \|^2 u^2 + \left(1 - \f 1 2 \sqrt{ \f {\tilde \mu}{\tilde \nu}} \right)^{1/2} \f {2 C \gamma \norm {\b x} u}{\sqrt{\tilde \mu}} \sqrt{\Delta_k}, \numberthis \label{eq:Delta_k+1_rightmost}
    \end{align*}
    since $\| \overline{\b P}_\lambda^\dagger\| \le 1 / \tilde \mu$.
    Therefore, by combining~\eqref{eq:Delta_k+1} and \eqref{eq:Delta_k+1_rightmost}, we conclude
    $$ \m E[ \Delta_{k+1}] \le \left(1 - \f 1 2 \sqrt{ \f {\tilde \mu}{\tilde \nu}} \right) \Delta_k + \left(1 - \f 1 2 \sqrt{\f {\tilde \mu}{\tilde \nu}}\right)^{1/2} \f { 2 C \gamma \norm{\b x } u}{\sqrt{\tilde \mu}} \sqrt{\Delta_k} + \f 1 {\tilde \mu} C^2\gamma^2 \| \b x \|^2 u^2.$$
    Substituting in $\gamma = 1 / \sqrt{\tilde \mu \tilde \nu}$ gives the desired result.
\end{proof}

The large error in $\gamma \b w_k$ is reflected in the result in \cref{lem:accelvy}, as we see a convergence horizon term of $C^2 \norm{\b x}^2 u^2 / (\tilde \mu ^2 {\tilde \nu}).$ Approximating $\tilde \mu \approx \tilde \kappa(\mat A)^{-2}$, this implies that the forward error will necessarily have at least a $\tilde \kappa(\mat A)^{2}$ term in its convergence horizon, potentially amplified by slow convergence. Fortunately, the accelerated method converges quickly enough so that it is not amplified much, as implied by \cref{thm:accelkz}.

\begin{proof}[Proof of \cref{thm:accelkz}]
    For all $i \le k,$ let $e_i = \m E[\Delta_i] = \m E[ \| \widehat{\b v}_i - \b x \|_{\overline{\b P}_\lambda^\dagger}^2 + \f 1 {\tilde \mu} \| \widehat{\b y}_i - \b x \|^2]$.  We inductively assume that the conditions of \cref{lem:accelvy} hold, ie. for all $i < k$, we may assume that $ \m E \| \widehat{\b v}_i \|^2 \lesssim \f 1 {\tilde \mu} \| \b x \|^2$ and  $ \m E \| \widehat {\b y}_i \|^2 \lesssim \| \b x\|^2$.
    Then, from the lemma, we get the recursive inequality, for all $i \le k$,
    $$ e_{i} \le r e_{i-1} + 2 \sqrt{d r } \sqrt{e_{i-1}} + d,$$
    where we set $r: = 1 - \f 1 2 \sqrt{ \tfrac {\tilde \mu}{\tilde \nu}}$ and $d \coloneqq  C^2 \| \b x \|^2 u^2/ (\tilde \mu^2 \tilde \nu).$
   Consider the map $f: \m R_{\ge 0} \to \m R_{\ge 0}$ defined by $f(e) = r e + 2\sqrt {dre} + d$, which has fixed point $e_{\text{fix}} \coloneqq  \f d {(1 - \sqrt r)^2}.$
    Note that $f$ is increasing, so $f$ maps $[e_{\text{fix}}, \infty)$ to itself. Also, on this interval, the derivative of $f$ satisfies 
    $$ f'(e) \le r + \f {\sqrt{dr}}{\sqrt{e_{\text{fix}}}} = r + \sqrt r (1 - \sqrt r) = \sqrt r < 1,
    $$ 
    Then, the mean value theorem implies that $f$ is Lipschitz with constant $\sqrt r$. Let $e_i' = f^i(e_0)$. We deduce
    \begin{align*}
    &| f(e_{k-1}') - f(e_{\text{fix}}) | \le \sqrt r | e_{k-1}' - e_{\text{fix}}| 
    \\ \implies &| e_k' - e_{\text{fix}}| \le r^{k/2} | e_0 - e_{\text{fix}} |  
    \\ \implies & e_k' \le e_{\text{fix}} + r^{k/2} |e_0 - e_{\text{fix}}| \le e_{\text{fix}} + r^{k/2} e_0,
    \end{align*} 
    as long as $e_0 \in [e_{\text{fix}}, \infty).$ On the other hand, if $e_0 < e_{\text{fix}}$, then $e_k' < e_{\text{fix}}$ for all $k$ because $f$ is increasing. Thus, in either case, we conclude that 
    $ e_k \le e_k' \le e_{\text{fix}} + r^{k/2} e_0,$
    which implies
    \begin{equation*}
        \begin{cases}
            \m E\| \widehat{\b v}_k - \b x \|^2  \le e_{\text{fix}} + r^{k/2} e_0, \\ 
            \m E\| \widehat{\b y}_k - \b x \|^2  \le \tilde \mu (e_{\text{fix}} + r^{k/2} e_0).
        \end{cases}
    \end{equation*}
    Notice that the conditions $ \m E \| \widehat{\b v}_k \|^2 \lesssim \f 1 {\tilde \mu} \| \b x \|^2$ and  $ \m E \| \widehat {\b y}_k \|^2 \lesssim \| \b x\|^2$ necessary for the induction easily follow from these inequalities, since the two inequalities imply that $\m E \norm{ \wb v_k - \b x  }^2 \lesssim \f 1 {\tilde \mu} \norm{\b x}^2$ and $\m E \norm{\wb y_k - \b x}^2 \lesssim \norm{\b x}^2$, after which the triangle inequality implies the desired conditions.
    Finally, observe that for some constant $c$ (independent of $n$) and some polynomial $C_1(n),$
    \begin{align*}
        \m E \|\widehat{\b x}_k - \b x\|^2  
        &\le  c^2 \cdot \m E \| \alpha(\widehat{\b v}_k - \b x ) \|^2 + c^2 \cdot \m E \| (1 - \alpha^2)(\widehat{\b y}_k - \b x) \|^2 + C_1 \| \b x \|^2 u^2 \\ 
        & \le c^2 (\alpha^2 + \tilde \mu) (e_{\text{fix}} + r^{k/2} e_0) +  C_1 \| \b x \|^2 u ^2 \\ 
        & \le \f {C^2 \| \b x \|^2 u^2}{(1 - \sqrt r)^2\tilde \mu \tilde \nu} + c^2 r^{k/2} \tilde \mu e_0,
    \end{align*}
    where we redefined $C$ and used $\alpha^2 \le \tilde \mu$. With the initialization $\b v_0 = \b y_0 = \b 0$, we have that 
    $ e_0 = \m E[ \| \widehat{\b v}_0 - \b x \|_{\overline{\b P}_\lambda^\dagger}^2 + \f 1 {\tilde \mu} \| \widehat{\b y}_0 - \b x \|^2] \le \f 2 {\tilde \mu} \| \b x \|^2.$ Hence,
    \begin{align*} \m E \|\widehat{\b x}_k - \b x\|^2 \le \f {C^2\| \b x \|^2u^2}{(1 - \sqrt r)^2 \tilde \mu \tilde \nu} + c^2r^{k/2} \| \b x \|^2.
     \end{align*}
      Lastly, $\sqrt r = \sqrt{1 - \f 1 2 \sqrt{\tfrac {\tilde \mu} {\tilde \nu}}} \le 1 - \f 1 4 \sqrt{\tfrac {\tilde \mu} { \tilde \nu}}$ simplifies the first term, and $r \le 1 - \f {1}{4 \tilde \kappa(\mat A)\sqrt {n} }$ (from option \ref{op:1} or \ref{op:2}) simplifies the second term.
\end{proof}

Lastly, we combine the black-box result \cref{thm:refinement_exp} with \cref{thm:accelkz} to prove \cref{thm:accelkz_ir}.
\begin{proof}[Proof of \cref{thm:accelkz_ir}]
    Let $k_i$ be the number of iterations of ARK in the $i$th refinement, and let $c$ be the constant and let $C_1(n)$ be the polynomial so that~\cref{thm:accelkz} gives
    \begin{align*}
     \m E \|\widehat{\b x}_1 - \b x\| \le 
     \left[ \left(1 - \f 1 {4 \tilde \kappa(\mat A) \sqrt{n}} \right)^{k_1/4}c +  C_1 \tilde \kappa(\mat A)^2 u\right] \norm{\b x}.
     \end{align*}

    Then, during the $i$th refinement, $ \left[ \left(1 - \f 1 {4 \tilde \kappa(\mat A) \sqrt{n}} \right)^{k_i/4}c + C_1 \tilde{\kappa}(\mat{A})^2 u \right]$ factor is the $q$ defined in~\cref{thm:refinement_exp}. The theorem will immediately give the desired result, so it suffices to show that $(1 + C_2u) q < 1/2$ and $C_2 \tilde \kappa(\mat A) u < 3/4$, where we set $C_2$ to be the polynomial $C$ defined in~\cref{thm:refinement_exp}. If we choose $k_i$ to satisfy
    $$k_i > \f {4 \log \left(\f  1 {2c(1 + C_2u)} - \f{C_1}{c} \tilde \kappa(\mat{A})^2 u \right)} { \log \left(1 - \f 1 {4 \tilde \kappa(\mat A) \sqrt{n}} \right)} = \order(\tilde \kappa(\mat A) \sqrt n),$$ then
    \begin{align*} 
    (1 + C_2u) q 
    < (1 + C_2u) \left[ \left(\f  1 {2(1 + C_2u)} - C_1 \tilde \kappa(\mat{A})^2 u \right) + C_1\tilde \kappa(\mat{A})^2 u \right]
    = \f 1 2.
    \end{align*}
    The second requirement, $C_2 \tilde \kappa(\mat A) u < 3/4$, is implied by the assumption $\tilde \kappa(\mat{A})^2 u \ll 1.$
\end{proof}

\section{Experimental results}\label{sec:experiment}
In this section, we generate matrices with a variety of singular value distributions to experimentally verify the theoretical results in \cref{thm:base_kz}, \cref{thm:base_kz_ir}, \cref{thm:accelkz}, and \cref{thm:accelkz_ir}. Code used to generate the results can be viewed at \cite{kaczmarz_stability_repo}. 

\begin{table}[t]
\centering
\caption{Test matrices with prescribed singular value distributions $\sigma_1 \ge \sigma_2 \ge ... \ge \sigma_n$. Parameters $\alpha$ and $\beta$ are chosen to match the desired Demmel condition number.}
\begin{tabular}{ll}
\toprule
\textbf{Type} & \textbf{Singular Value Formula} \\
\midrule
exp 
& $\sigma_k = \exp(-\beta (k-1))$ \\

poly 
& $\sigma_k = k^{-\alpha}$ \\


highrank 
& $\sigma_k =
\begin{cases}
\text{linear decay from } 1 \text{ to } \tilde{\kappa}^{-1}, & 1 \le k \le 0.9n \\
\tilde{\kappa}^{-1}, & 0.9n < k \le n
\end{cases}$ \\

harmonic 
& $\sigma_k = (1 + \alpha (k-1))^{-1}$ \\
\bottomrule
\end{tabular}
\label{table:test_suite}
\end{table}

\vspace{0.5em}
\noindent  \textbf{Setup.} We run experiments on a test suite consisting of four $n \times n$ randomly generated matrices with a specified Demmel condition number $\tilde \kappa(\mat{A})$.
We generate the test suite using the singular value distributions in \cref{table:test_suite} with Haar-random left and right singular vectors.
We use $n = 500$ and $\tilde \kappa(\mat{A}) \in \{10^4, 10^5\}$.
The solution vector $\b x$ is chosen to be a uniformly random unit vector, and we set $\b b \coloneqq \mat{A} \b x$.
All experiments were run in single precision, which has $u \approx 10^{-8}.$ 

We test randomized Kaczmarz and ARK both with and without iterative refinement.
For randomized Kaczmarz, we perform one step of iterative refinement initiated halfway through the run.
For ARK, we initiate a new step of iterative refinement every $10^8$ iterations.
As a baseline, we compare against the built-in MATLAB command $\mat A \backslash \b b$, which solves $\mat{A}\vec{x} = \vec{b}$ using a standard direct solver.

When $\tilde \kappa(\mat{A}) = 10^4$, we run each method for $10^9$ iterations because it is approximately the number of iterations needed for randomized Kaczmarz to converge.
Note that $10^9$ is a large number of iterations---already bordering on infeasibility---which highlights the impracticality of plain randomized Kaczmarz when the system is ill-conditioned.
For $\tilde \kappa = 10^5$, we run each method for $10^{10}$ iterations to clearly see the convergence of ARK with iterative refinement.

For ARK, we regularize the probabilities as described in the beginning of \cref{sec:rak}.
For the random matrices we consider, the rows are typically balanced in norm already, so no regularization should be necessary.
We use a small value of $\lambda = 10^{-6}$ anyway to guard against rare events in the random test matrix generation.
In our experience, we did not notice much difference when we skipped regularization by setting $\lambda = 0$. 
For the acceleration parameters $\tilde \mu$ and $\tilde \nu$, we set them to the exact values of $\mu$ and $\nu$ (see \eqref{eq:mu_nu}). We choose these parameters to understand the stability of ARK in the most favorable scenario; see \cite{derezinski2025randomized} for how to estimate these parameters at runtime.


\vspace{0.5em}
\noindent \textbf{Results.}
\cref{fig:exp} in the introduction shows the result of these methods on the exp matrix for both $\tilde \kappa(\mat{A}) \in \{10^4, 10^5\}$ cases. 
\Cref{fig:testsuite1e4,fig:testsuite1e5} depict the rest of the results of these methods on the generated matrices. In the legends for the figures, RK stands for randomized Kaczmarz, ARK is accelerated Kaczmarz as usual, and ``$+ $ IR" signifies combining the specified method with iterative refinement.

Consistent with \cref{thm:base_kz}, in all figures, randomized Kaczmarz consistently achieves errors that are orders of magnitude worse than the direct method, confirming the method is not forward stable.
For the $\tilde \kappa(\mat{A}) = 10^4$ test suite, we expect that performing iterative refinement once every $\order(\tilde \kappa(\mat A)^2)$ randomized Kaczmarz iterations should be sufficient to obtain forward stability.
Indeed, we perform iterative refinement a single time after $5\cdot \tilde \kappa(\mat A)^2 = 5\cdot 10^8$ steps, and we obtain forward stability on the entire $\tilde \kappa(\mat{A}) = 10^4$ test suite.
When $\tilde \kappa(\mat{A}) = 10^5$, randomized Kaczmarz does not converge in the allotted $10^{10}$ iteration budget, and the resulting error is large with and without iterative refinement.



In all figures, ARK converges significantly more rapidly than standard randomized Kaczmarz, converging to its maximum numerically achievable accuracy within the specified iteration count on all problems except the ``harmonic'' test problem with $\tilde\kappa(\mat{A}) = 10^5$.
However, without iterative refinement, ARK is still observed to be not fully stable---indeed, on two of four instances with $\tilde\kappa(\mat{A}) = 10^4$, the accuracy of ARK stagnates above that of standard randomized Kaczmarz!
With iterative refinement every $10^8$ iterations, we obtain convergence to be comparable with the direct method in all cases when ARK converges sufficiently rapidly. 

\begin{figure}[t]
\centering
\begin{minipage}{0.325\linewidth}
    \centering
    poly
  \includegraphics[width=\linewidth]{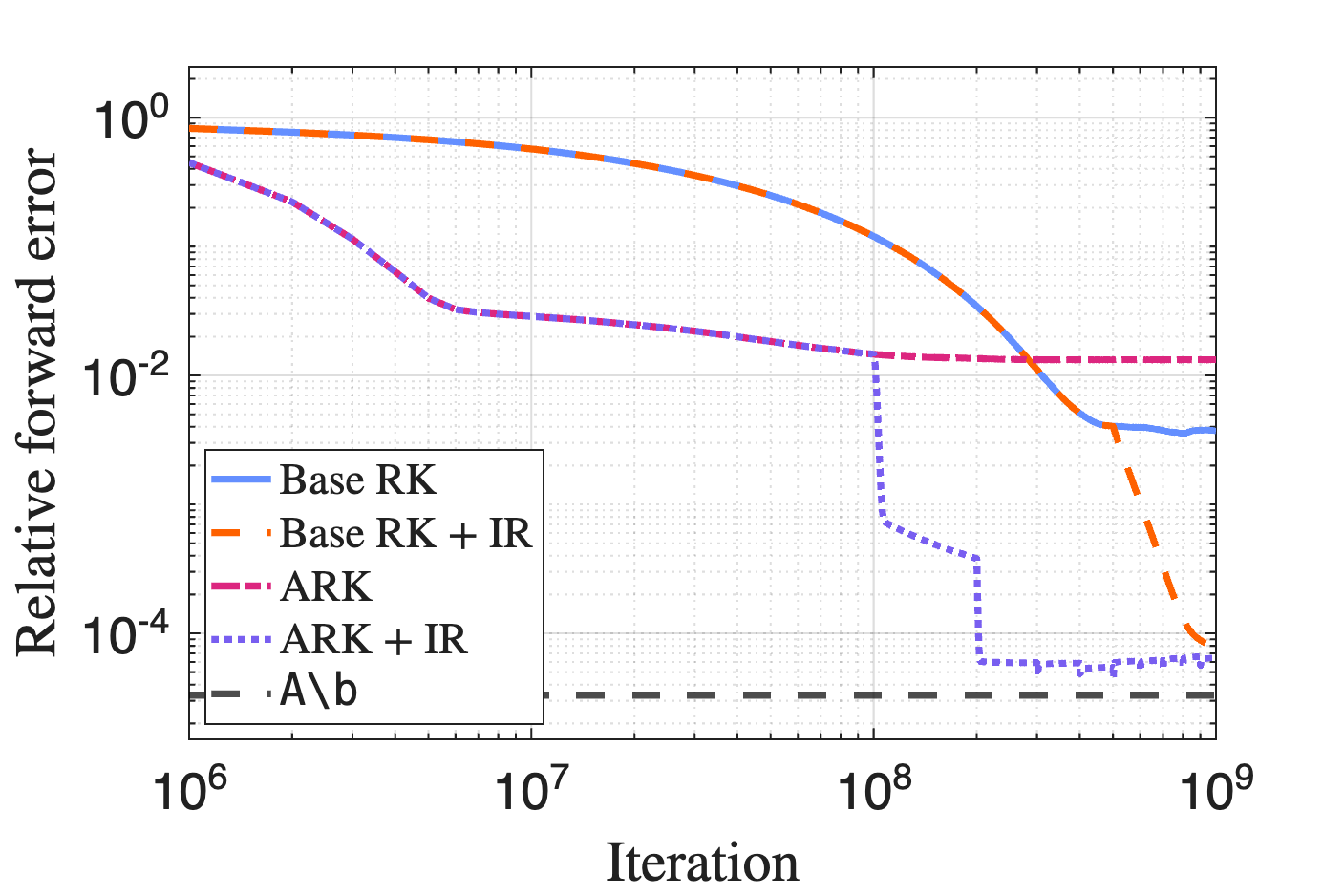}
\end{minipage}
\begin{minipage}{0.325\linewidth}
    \centering
    highrank
\includegraphics[width = \linewidth]{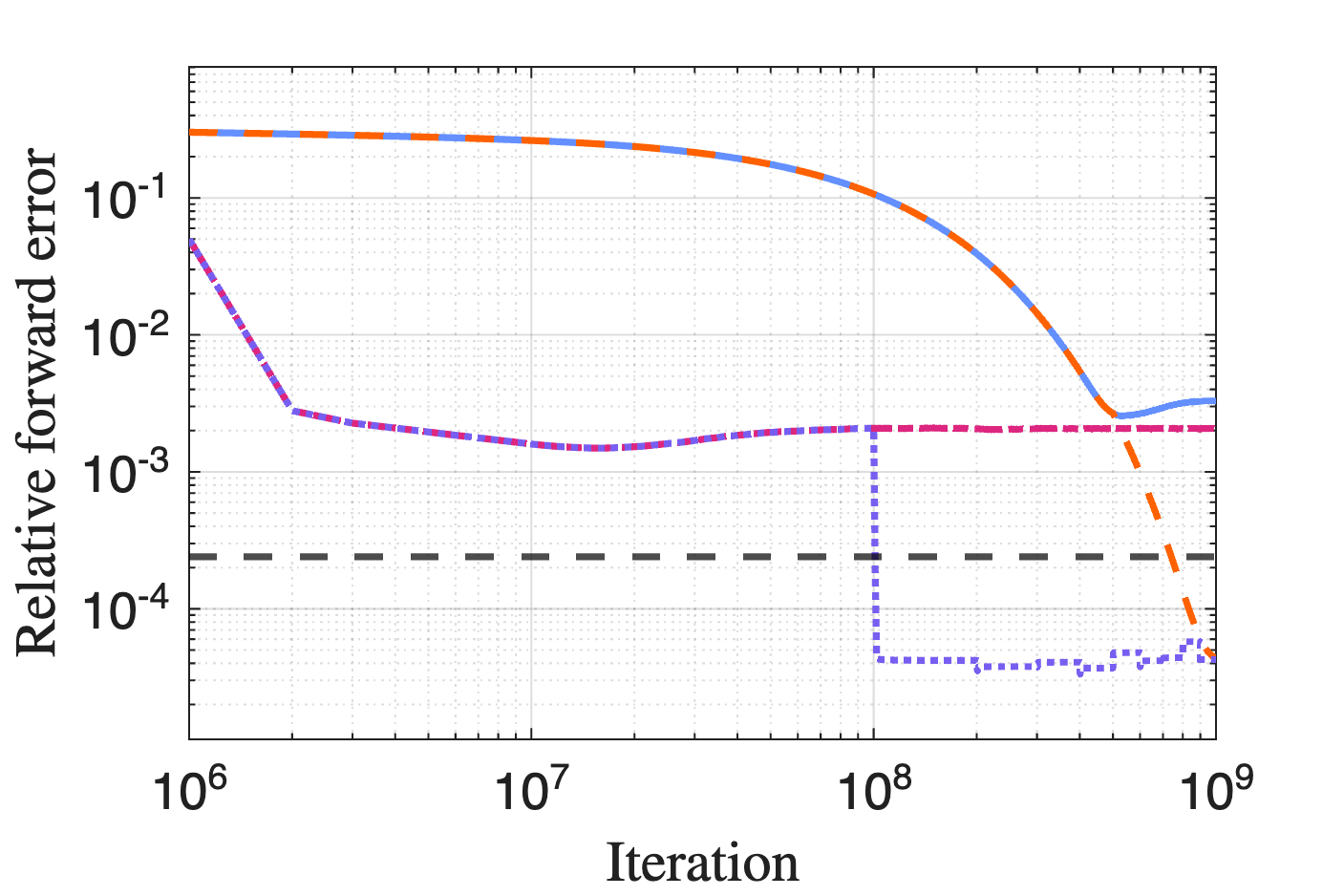} 
\end{minipage}
\begin{minipage}{0.325\linewidth}
    \centering
    harmonic
\includegraphics[width = \linewidth]{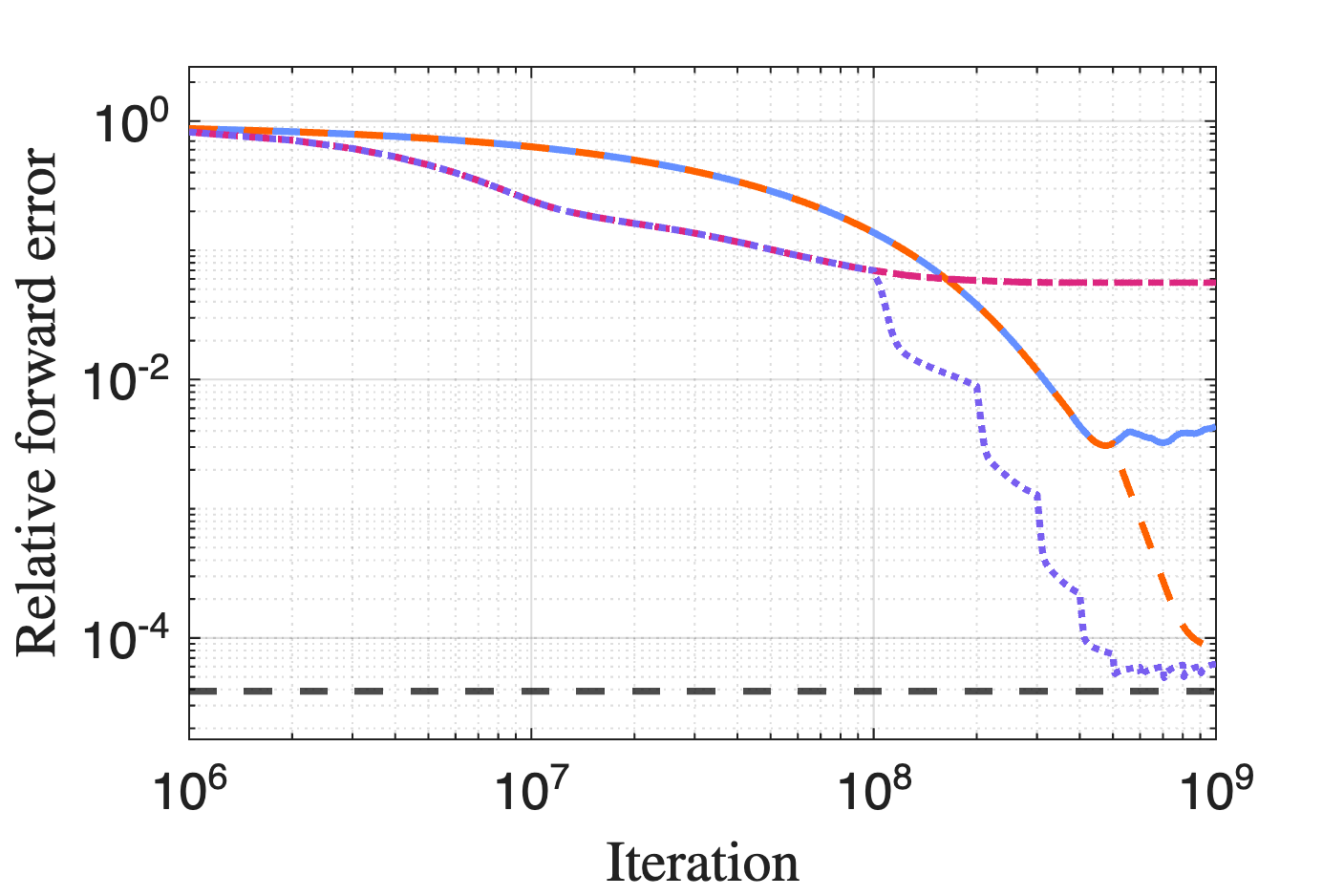} 
\end{minipage}
\caption{Forwards error for test suite with $\tilde{\kappa}(\mat{A}) = 10^{4}.$}
\label{fig:testsuite1e4}
\end{figure}

\begin{figure}[t]
\centering

\begin{minipage}{0.33\linewidth}
  \centering 
  poly
  \includegraphics[width=\linewidth]{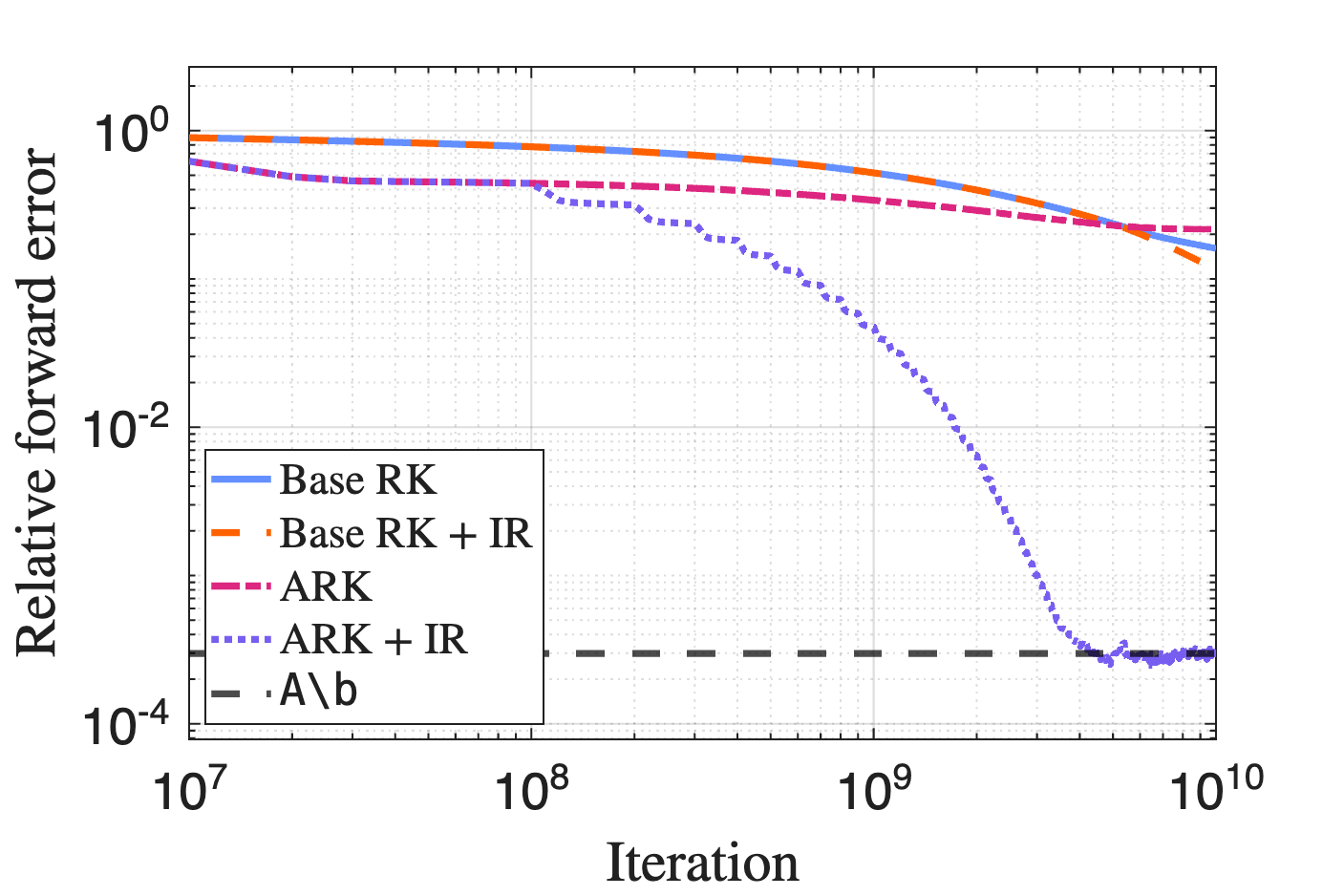}
  \end{minipage}
\begin{minipage}{0.325\linewidth}
\centering
highrank
\includegraphics[width = \linewidth]{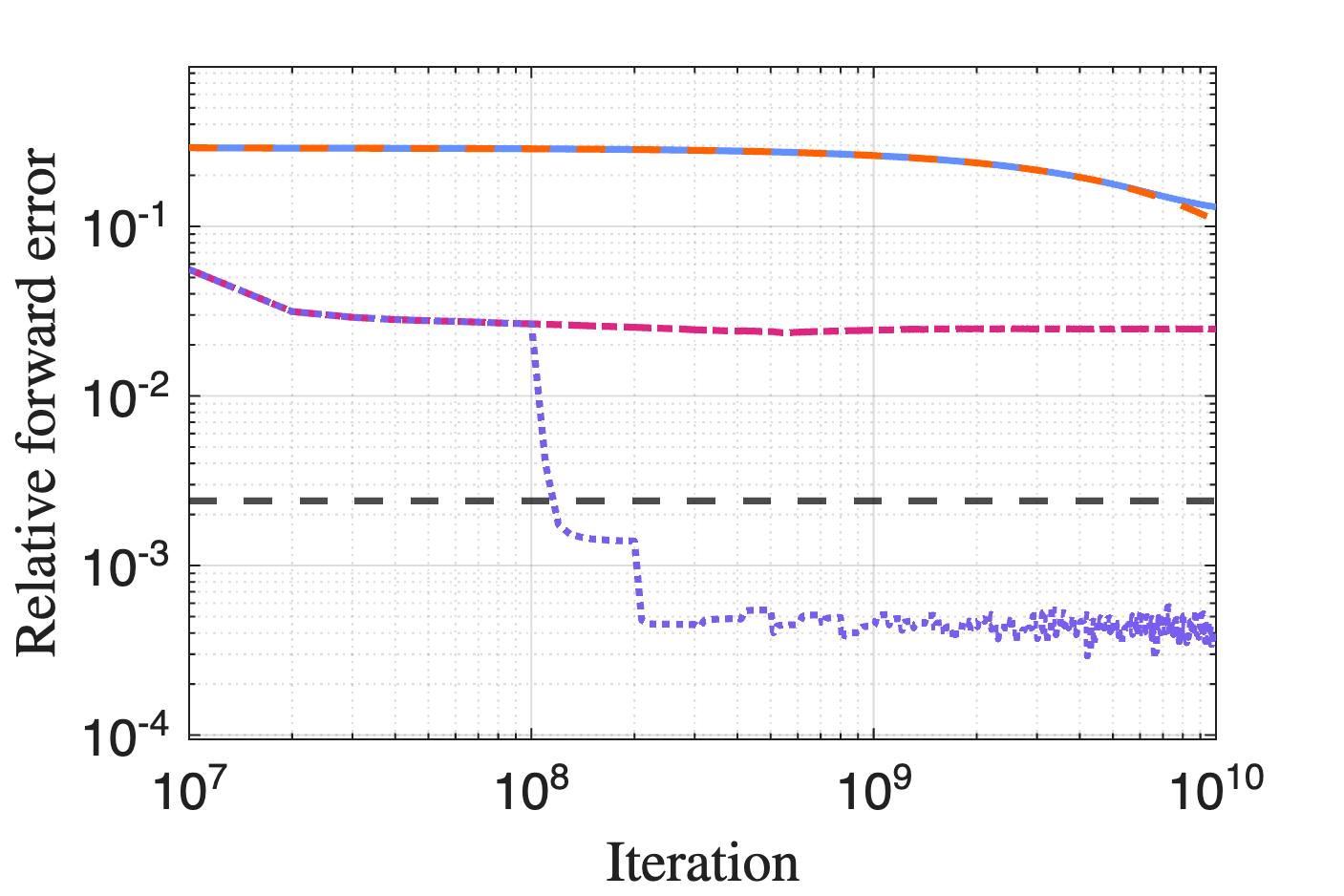}
\end{minipage}
\begin{minipage}{0.325\linewidth}
\centering
harmonic
\includegraphics[width = \linewidth]{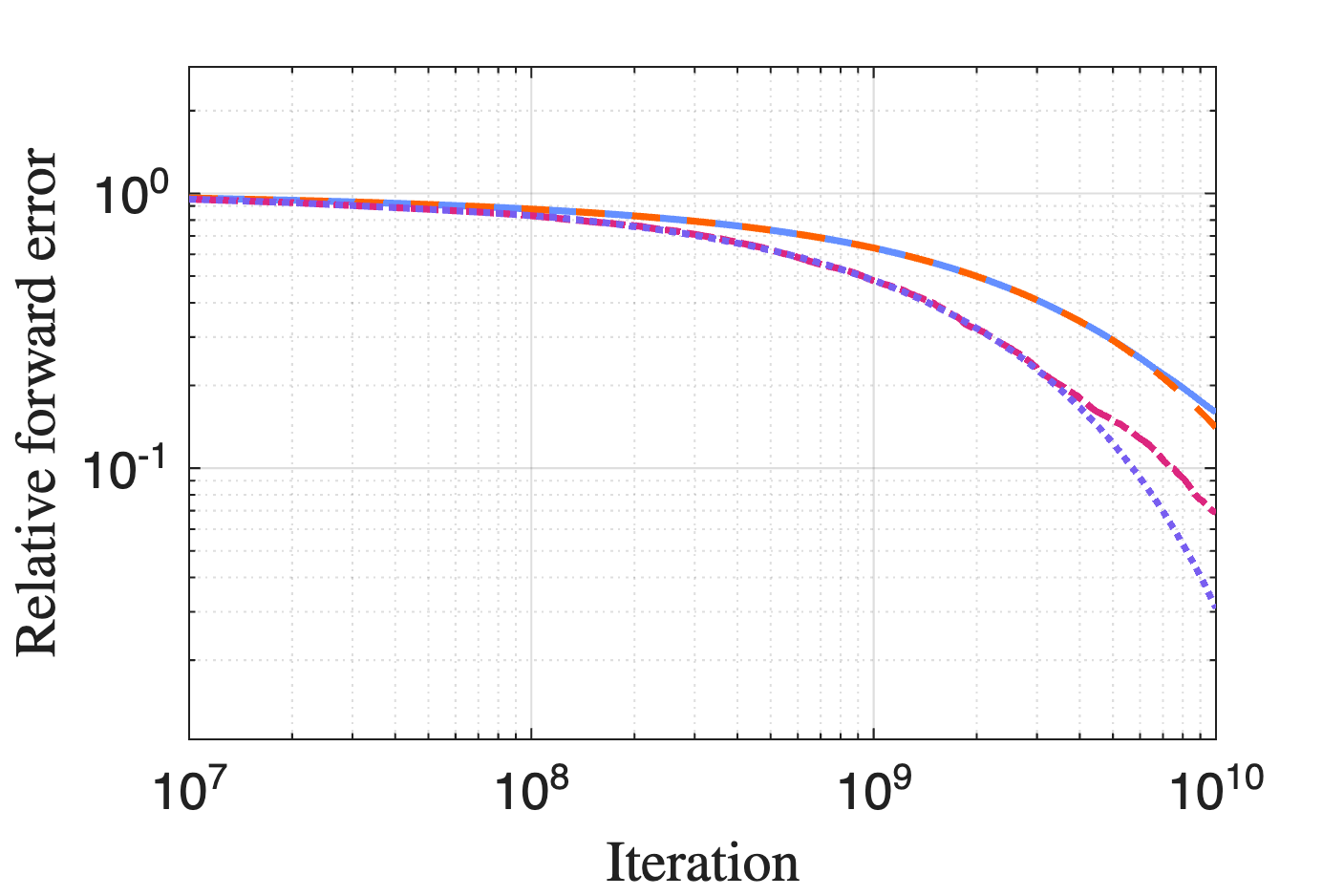}
\end{minipage}
\caption{Forwards error for test suite with $\tilde{\kappa}(\mat{A}) = 10^{5}.$}
\label{fig:testsuite1e5}
\end{figure}
\FloatBarrier

\section{Conclusion} \label{sec:conclusion}
In this work, we investigated the numerical stability of randomized Kaczmarz methods and showed how those stability properties can be improved by iterative refinement.
Our results provide both a conceptual and practical framework for understanding how such methods behave in finite-precision arithmetic and how their limitations can be addressed.


Our first results treat the randomized Kaczmarz method.
We provide numerical experiments demonstrating that the method is not forward stable, and we complement it with theoretical analysis showing that the relative forward error can stagnate as high as $\tilde{\kappa}(\mat{A})^2u$.
We show that this issue can be remedied by combining randomized Kaczmarz with iterative refinement.
Our analysis of iterative refinement is general and treats an arbitrary randomized linear solver.


Given the slow convergence of randomized Kaczmarz on ill-conditioned problems, we next turned our attention accelerated randomized Kaczmarz (ARK).
Our numerical and theoretical results suggest ARK is also not forward stable, showing that the relative forward error stagnates at the same $\tilde{\kappa}(\mat{A})^2u$ level. 
As with base randomized Kaczmarz, we can stabilize with iterative refinement.

There are several natural directions to investigate in future work.
First, one could investigate the numerical stability of block randomized Kaczmarz methods \cite{Elfving1980,Needell_2014,derezinski2024solving,derezinski2025randomized}. 
Each step of block randomized Kaczmarz requires accessing the pseudoinverse of a wide rectangular matrix, which introduces additional complications in the analysis and possibilities for new stability phenomena to emerge.
Second, one could extend this analysis to finite-precision analysis of other randomized iterative methods.
For example, do these instabilities manifest in stochastic gradient descent over nonlinear functions, and if so, can these methods be stabilized by a form of iterative refinement?



\appendix
\section{Appendix}
In this appendix, we prove that (accelerated) randomized Kaczmarz with iterative refinement is equivalent to (accelerated) randomized Kaczmarz without iterative refinement (\cref{sec:ir_exact}), and we prove \cref{lem:mu_nu} (\cref{sec:mu_nu}).

\subsection{Iterative refinement in exact arithmetic} \label{sec:ir_exact}
In this section, we show that the iterates produced by Kaczmarz with iterative refinement and without are the same in exact arithmetic, and we prove the same result for ARK.

We first explain what it means for the iterates to be the same.
Denote the iterates produced by Kaczmarz without iterative refinement by $\b x_0, \b x_1, \dots.$ Let $t$ be the number of refinements performed, and let $k_i$ be the number of Kaczmarz iterations performed before doing each refinement, for all $0 \le i \le t - 1.$ Let $s_i = \sum_{j = 0}^{i-1} k_j$.
Denote the iterates produced by the refinement scheme by 
\begin{equation}
\begin{split}
     \b x_{s_0 + 0}^{(0)}, \dots ,&\b x_{s_0 + k_0}^{(0)} = \b x_{s_1}^{(1)},\\
     \b x_{s_1 + 1}^{(1)}, \dots, &\b x_{s_1 + k_1}^{(1)} = \b x_{s_2}^{(2)}, \\ 
     &\vdots \\ 
    \b x_{s_t + 1}^{(t)}, &\dots, \b x_{s_{t+1}}^{(t)}. \label{eq:ir_notation}
\end{split}
\end{equation}
    Here, we use the superscripts to track the refinement number, and we say that $\b x_{s_i + k_i}^{(i)} = \b x_{s_{i+1}}^{(i+1)}$ since the last iteration during a refinement is equal to the initial iteration of the subsequent refinement.
    
    Then, to say that the iterates produced by Kaczmarz with and without iterative refinement are the same is to say that 
    \begin{equation}
    \label{eq:same_kz}
    \b x_{s_i + j} = \b x_{s_i + j}^{(i)},
    \end{equation}
    for all $0 \le i \le t$ and $0 \le j \le k_i.$

\begin{lemma} \label{lem:ir_kz_exact}
    The iterates produced by Kaczmarz with iterative refinement and without are the same in exact arithmetic, i.e. \eqref{eq:same_kz} is satisfied for all $0 \le i \le t$ and $0 \le j \le k_i.$
\end{lemma}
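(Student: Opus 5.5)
The plan is to make precise what the refinement scheme computes, then prove \eqref{eq:same_kz} by induction on the refinement index $i$, with an inner induction on $j$.

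\textbf{Setting up the refinement scheme.} The scheme runs Kaczmarz on a residual system. Assume we have the refined solution $\b x^{(i)}_{s_i}$ at the start of the $i$th refinement. Then the scheme solves $\mat A\b e=\b r^{(i)}$, where $\b r^{(i)}\coloneqq\b b-\mat A\b x^{(i)}_{s_i}$, starting from $\b e_0=\b 0$. The row indices it uses are $r(s_i),\dots,r(s_i+k_i-1)$, which we couple to be exactly the indices drawn by plain Kaczmarz. The represented iterate is $\b x^{(i)}_{s_i+j}\coloneqq\b x^{(i)}_{s_i}+\b e_j$.

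\textbf{Key step: residual updates are shifted plain updates.} The main identity to check is the following. For any shift $\b z$ and any row index $\ell$, let $\b e'$ be the Kaczmarz update of $\b e$ on the system $\mat A\b e=\b b-\mat A\b z$ using row $\ell$. Then $\b z+\b e'$ is the Kaczmarz update of $\b z+\b e$ on $\mat A\b x=\b b$ using row $\ell$. This holds because the residual entries agree:
\begin{equation*}
(b_\ell-\langle\b a_\ell,\b z\rangle)-\langle\b a_\ell,\b e\rangle=b_\ell-\langle\b a_\ell,\b z+\b e\rangle,
\end{equation*}
and both updates add this scalar times $\b a_\ell/\norm{\b a_\ell}^2$. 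In other words, the Kaczmarz update map commutes with translation by $\b z$, provided the right-hand side is shifted accordingly.

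\textbf{Induction.} For $i=0$ there is nothing to shift: $\b x^{(0)}_0=\b x_0$, the residual system is the original one, and the iterates coincide trivially.
\begin{itemize}
\item[] \emph{Outer hypothesis.} Assume $\b x^{(i)}_{s_i}=\b x_{s_i}$.
\item[] \emph{Inner step.} Induct on $j$ using the key identity with $\b z=\b x_{s_i}$ and $\ell=r(s_i+j)$. This gives $\b x_{s_i}+\b e_{j+1}=\b x_{s_i+j+1}$, so $\b x^{(i)}_{s_i+j}=\b x_{s_i+j}$ for all $0\le j\le k_i$.
\item[] \emph{Hand-off.} Taking $j=k_i$ and using $\b x^{(i)}_{s_i+k_i}=\b x^{(i+1)}_{s_{i+1}}$ gives the next base case, $\b x^{(i+1)}_{s_{i+1}}=\b x_{s_{i+1}}$.
\end{itemize}

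\textbf{Main obstacle.} There is no analytic difficulty. The only delicate points are conventions. First, the random row indices must be coupled identically in both runs, since otherwise "the same" can only mean equality in distribution. This coupling is valid because the sampling distribution \cref{eq:random-row} depends only on $\mat A$, not on the right-hand side. Second, the inner solve must start from $\b e_0=\b 0$; a warm start would break the identity.
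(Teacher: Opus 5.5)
Your proof is correct and follows the paper's argument: an outer induction on the refinement index with an inner induction on $j$. As in the paper, the key step is the identity $(b_\ell-\langle\b a_\ell,\b z\rangle)-\langle\b a_\ell,\b e\rangle=b_\ell-\langle\b a_\ell,\b z+\b e\rangle$, which shows that the shifted residual update coincides with the plain Kaczmarz update. Your explicit remarks about coupling the row indices and starting each inner solve from $\b e_0=\b 0$ make precise two conventions that the paper leaves implicit.
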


\begin{proof}
    We induct on the refinement number $i$. Assume that the two schemes produce the same iterates during the $i$th refinement, i.e. we have 
    $$ \b x_{s_i + j} = \b x_{s_i + j}^{(i)},$$
    for all $0 \le j \le k_i.$ Then, during the $(i+1)$st refinement, we initially have
    $ \b x_{s_{i+1}} = \b x_{s_{i+1}}^{(i+1)}.$
    Assume via induction that for some $j \ge 0$,
    $$ \b x_{s_{i+1} + j } = \b x_{s_{i+1} + j}^{(i+1)}.$$
    The $(i+1)$st refinement iterates on the system $\mat A \b x = \b b - \mat A \b x_{s_{i+1}}.$
    The underlying iterates produced by Kaczmarz during this $(i+1)$st refinement are added to $\b x_{s_{i+1}}$ in order to produce the iterates generated by the refinement scheme.
    
   Therefore, with $b = b_{r(i+1)}$ and $\b a = \b a_{r(i+1)}$, the next Kaczmarz iteration is
    \begin{align*}
        &\left[\b x_{s_{i+1}+(j+1)}^{(i+1)} - \b x_{s_{i+1}} \right] \\
        &\quad= \left[\b x_{s_{i+1} + j}^{(i+1)} - \b x_{s_{i+1}}\right] + \f { \left[b - \langle \b a, \b x_{s_{i+1}} \rangle\right] - \left \langle \b a, \left[\b x_{s_{i+1} + j}^{(i+1)} - \b x_{s_{i+1}}\right] \right \rangle } { \langle \b a, \b a \rangle } \b a \\
        &\quad= \left[\b x_{s_{i+1} + j} - \b x_{s_{i+1}}\right]  + \f { b - \langle \b a, \b x_{s_{i+1} + j}^{(i+1)} \rangle} { \langle \b a, \b a \rangle } \b a = \left[\b x_{s_{i+1} + j} - \b x_{s_{i+1}}\right]  + \f { b - \langle \b a, \b x_{s_{i+1} + j} \rangle} { \langle \b a, \b a \rangle } \b a \\
       &\quad= \b x_{s_{i+1} + j} + \f { b - \langle \b a, \b x_{s_{i+1} + j} \rangle} { \langle \b a, \b a \rangle } \b a  - \b x_{s_{i+1}} = \b x_{s_{i+1}+ (i+1)} - \b x_{s_{i+1}}.
    \end{align*}
    Hence 
    $$ \b x_{s_{i+1}+(j+1)}^{(i+1)} = \b x_{s_{i+1} + (j+1)},$$
    as desired.
    
\end{proof}

For ARK, we use the same notation for the $\b x$ iterates as in \eqref{eq:ir_notation}, and we adopt the same notation for the $\b y, \b v$, and $\b w$ iterates as well.
Then, to say that the iterates produced by ARK with and without iterative refinement are the same is to say that 
    \begin{equation}
    \label{eq:same_akz}
    \b x_{s_i + j} = \b x_{s_i + j}^{(i)}, \enspace \b y_{s_i + j} = \b y_{s_i + j}^{(i)}, \enspace \b v_{s_i + j} = \b v_{s_i + j}^{(i)}, \enspace \b w_{s_i + j} = \b w_{s_i + j}^{(i)},
    \end{equation}
    for all $0 \le i \le t$ and $0 \le j \le k_i.$ 

Throughout this paper, we have been analyzing ARK with block size one. However, for the following lemma, we present the result for a general block size. If $S$ is the subset of row indices we iterate on, then the projection vector now takes the form
$$ \b w = \mat{A}_S^\top  ( \mat{A}_S  \mat{A}_S^\top  + \lambda \mat{I})^{-1}( \mat{A}_S \b x - \b b_S).$$

\begin{lemma} \label{lem:ir_accelkz_exact}
    The iterates produced by ARK (with any block size) with iterative refinement and without are the same in exact arithmetic, i.e. \eqref{eq:same_akz} is satisfied for all $0 \le i \le t$ and $0 \le j \le k_i.$
\end{lemma}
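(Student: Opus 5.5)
The plan mirrors the proof of \cref{lem:ir_kz_exact}, with a double induction: an outer one on the refinement number $i$ and an inner one on the step $j$ within a refinement. The key structural fact is that ARK, started from the shifted origin, is equivariant under translation. Concretely, during the $(i+1)$st refinement we run ARK on $\mat A \b e = \b b - \mat A \b z$ with $\b z \coloneqq \b x_{s_{i+1}}$. The ARK iterates for this residual system are initialized at $\b v = \b y = \b 0$, with the same row sets $S$ as the unrefined run. The refinement-scheme iterates are these error iterates plus $\b z$.

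The induction hypothesis is that, for the current $j$,
\[
\b x^{(i+1)}_{s_{i+1}+j} - \b z,\quad \b y^{(i+1)}_{s_{i+1}+j} - \b z,\quad \b v^{(i+1)}_{s_{i+1}+j} - \b z
\]
are the ARK iterates of the residual system, and that these equal $\b x_{s_{i+1}+j}-\b z$, $\b y_{s_{i+1}+j}-\b z$ and $\b v_{s_{i+1}+j}-\b z$, respectively. We also need the $\b w$ iterates to coincide without any shift.

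The base case $j=0$ needs care, because ARK initializes $\b v_0=\b y_0=\b x_0$. The unrefined run does not restart $\b v$ and $\b y$ at a refinement boundary, so the base case requires $\b v_{s_{i+1}} = \b y_{s_{i+1}} = \b x_{s_{i+1}}$. Equivalently, refinement must carry over the full state $(\b v,\b y)$ shifted by $\b z$, rather than restart. I will adopt the notation \eqref{eq:ir_notation} so that the states at the boundary are identified. That is, $\b y^{(i+1)}_{s_{i+1}} = \b y^{(i)}_{s_i+k_i}$, and likewise for $\b v$ and $\b x$, so that equality at $j=0$ follows from the outer hypothesis. I expect this bookkeeping to be the main obstacle: one must check that the convention makes the base case true, and state explicitly which state is passed between refinements.

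For the inductive step I verify each line of \eqref{eq:ARK}. First, $\b x = \alpha \b v + (1-\alpha)\b y$ is affine with coefficients summing to $1$, so it commutes with subtracting $\b z$. Next, the block projection vector for the residual system is
\[
\mat A_S^\top(\mat A_S\mat A_S^\top+\lambda \mat I)^{-1}\big(\mat A_S(\b x - \b z) - (\b b_S - \mat A_S \b z)\big) = \mat A_S^\top(\mat A_S\mat A_S^\top+\lambda \mat I)^{-1}(\mat A_S\b x - \b b_S),
\]
so the $\b w$ iterates agree exactly, with no shift. The row-selection probabilities depend only on $\mat A$ and $\lambda$, so the same $S$ can be coupled. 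Then $\b y_{k+1} = \b x_k - \b w_k$ shifts by $\b z$. Finally, $\b v_{k+1} = \beta \b v_k + (1-\beta)\b x_k - \gamma \b w_k$ is again affine with the $\b v$ and $\b x$ coefficients summing to $1$, so it also shifts by $\b z$. Adding $\b z$ back gives \eqref{eq:same_akz} at step $j+1$, which closes both inductions.
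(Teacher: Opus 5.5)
Your proposal is correct and is essentially the paper's argument: a double induction on the refinement index and the inner step, checking that each line of \eqref{eq:ARK} commutes with translation (the $\b x$, $\b y$, $\b v$ weights sum to one) and that the block projection vector $\b w$ is unchanged when $\b b$ is replaced by $\b b - \mat A \b z$. Using a single shift $\b z = \b x_{s_{i+1}}$ for all three iterates and explicitly carrying the $(\b v, \b y)$ state across refinements is cleaner than the paper's per-variable shifts. One fix: your opening claim that the residual run is initialized at $\b v = \b y = \b 0$ should read $\b v_{s_{i+1}} - \b z$ and $\b y_{s_{i+1}} - \b z$, since that is what the carry-over convention you adopt actually gives.
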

    
\begin{proof}
    The proof is similar to the proof of \cref{lem:ir_kz_exact}.
    We induct on the refinement number $i$. Assume that the two schemes produce the same iterates during the $i$th refinement, i.e. we have 
    $$ \b y_{s_i + j} = \b y_{s_i + j}^{(i)}, \enspace \b v_{s_i + j} = \b v_{s_i + j}^{(i)}, \enspace \b x_{s_i + j} = \b x_{s_i + j}^{(i)}, \enspace \b w_{s_i + j} = \b w_{s_i + j}^{(i)},$$
    for all $0 \le j \le k_i.$ Then, during the $(i+1)$st refinement, we initially have
    $$ \b y_{s_{i+1}} = \b y_{s_{i+1}}^{(i+1)}, \enspace  \b v_{s_{i+1}} = \b v_{s_{i+1}}^{(i+1)}, \enspace \b x_{s_{i+1}} = \b x_{s_{i+1}}^{(i+1)}, \enspace \b w_{s_{i+1}} = \b w_{s_{i+1}}^{(i+1)}.$$
    Assume via induction that for some $j \ge 0$,
    $$ \b y_{s_{i+1} + j } = \b y_{s_{i+1} + j}^{(i+1)}, \enspace \b v_{s_{i+1} + j } = \b v_{s_{i+1} + j}^{(i+1)}, \enspace \b x_{s_{i+1} + j } = \b x_{s_{i+1} + j}^{(i+1)}, \enspace \b w_{s_{i+1} + j } = \b w_{s_{i+1} + j}^{(i+1)}.$$
    The $(i+1)$st refinement iterates on the system $\mat A \b x = \b b - \mat A \b x_{s_{i+1}}.$
    The $\b y, \b v,$ and $\b x$ underlying iterates produced by ARK during this $(i+1)$st refinement are added to $\b y_{s_{i+1}}, \b v_{s_{i+1}},$ and $\b x_{s_{i+1}}$ respectively in order to produce the iterates generated by the refinement scheme.
    
   Therefore, with $b = b_{r(i+1)}$ and $a = \b a_{r(i+1)}$, the next ARK iterate for $\b y$ is
    \begin{align*}
        \left[ \b y_{s_{i+1} + (j+1)}^{(i+1)} - \b y_{s_{i+1}} \right] &= 
        \left[ \b x_{s_{i+1} + j}^{(i+1)} - \b y_{s_{i+1}} \right] - \b w_{s_{i+1} + j}^{(i+1)} \\ 
        &= \left[ \b x_{s_{i+1} + j} - \b y_{s_{i+1}} \right] - \b w_{s_{i+1} + j} = \b y_{s_{i+1} + (j+1)} - \b y_{s_{i+1}}.
    \end{align*}
    Hence $\b y_{s_{i+1} + (j+1)}^{(i+1)} = \b y_{s_{i+1}+(j+1)}.$
   
    The next ARK iterate for $\b v$ is
    \begin{align*}
    &\left[\b v_{s_{i+1} + (j+1)}^{(i+1)}  - \b v_{s_{i+1}} \right]  = \beta \left[ \b v_{s_{i+1}+ j}^{(i+1)}  - \b v_{s_{i+1}} \right] + (1 - \beta) \left[\b x_{s_{i+1} + j}^{(i+1)}  - \b v_{s_{i+1}}\right] - \gamma \b w_{s_{i+1} + j}^{(i+1)}
    \\&\qquad= \beta \b v_{s_{i+1}+j}^{(i+1)} + (1 - \beta) \b x_{s_{i+1}+j}^{(i+1)}  - \b v_{s_{i+1}} - \gamma \b w_{s_{i+1}+j}^{(i+1)}
    \\&\qquad= \beta \b v_{s_{i+1}+j} + (1 - \beta) \b x_{s_{i+1}+j} - \gamma \b w_{s_{i+1}+j} - \b v_{s_{i+1}} = \b v_{s_{i+1}+(j+1)} - \b v_{s_{i+1}}.
    \end{align*}
    Hence $\b v_{s_{i+1} + (j+1)}^{(i+1)} = \b v_{s_{i+1}+(j+1)}.$

    The next ARK iterate for $\b x$ is
    \begin{align*}
    &\left[\b x_{s_{i+1} + (j+1)}^{(i+1)}  - \b x_{s_{i+1}} \right]  = \alpha \left[ \b v_{s_{i+1}+ j}^{(i+1)}  - \b x_{s_{i+1}} \right] + (1 - \alpha) \left[\b y_{s_{i+1} + j}^{(i+1)}  - \b x_{s_{i+1}}\right]
    \\&\qquad= \alpha \b v_{s_{i+1}+j}^{(i+1)} + (1 - \alpha) \b y_{s_{i+1}+j}^{(i+1)}  - \b x_{s_{i+1}}
    \\&\qquad= \alpha \b v_{s_{i+1}+j} + (1 - \alpha) \b y_{s_{i+1}+j}  - \b x_{s_{i+1}} = \b x_{s_{i+1}+(j+1)} - \b x_{s_{i+1}}.
    \end{align*}
    Hence $\b x_{s_{i+1} + (j+1)}^{(i+1)} = \b x_{s_{i+1}+(j+1)}.$

    Lastly, let $S$ be the subset of row indices we iterate on. Then, the next ARK iterate for $\b w$ is 
    \begin{align*}
    &\b w_{s_{i+1} + (j+1)}^{(i+1)} =  \mat{A}_S^\top  ( \mat{A}_S  \mat{A}_S^\top  + \lambda \mat{I})^{-1}\left ( \mat{A}_S \left[\b x_{s_{i+1} + j}^{(i+1)} - \b x_{s_{i+1}}\right] - \left[\b b_S - \mat{A}_S \b x_{s_{i+1}} \right] \right)
    \\ &\qquad =  \mat{A}_S^\top  ( \mat{A}_S  \mat{A}_S^\top  + \lambda \mat{I})^{-1}\left ( \mat{A}_S\b x_{s_{i+1} + j}^{(i+1)} - \b b_S \right) \\ 
    &\qquad =  \mat{A}_S^\top  ( \mat{A}_S  \mat{A}_S^\top  + \lambda \mat{I})^{-1}( \mat{A}_S \b x_{s_{i+1} + j} - \b b_S)  = \b w_{s_{i+1} + j}.
    \end{align*} 
\end{proof}

\subsection{Proof of \cref{lem:mu_nu}} \label{sec:mu_nu}

\begin{proof} [Proof of \cref{lem:mu_nu}]
First note that since the probability of choosing row $i$ is $\f {\norm{\b a_i}^2 + \lambda } {\norm{\mat A}_{\rm F}^2 + n\lambda}$, we get
$$ \overline {\b P}_{\lambda} = \m E[\b P_{\lambda, i}] = \sum_{i = 1}^n \f {\norm{\b a_i}^2 + \lambda } {\norm{\mat A}_{\rm F}^2 + n\lambda} \f {\b a_i \b a_{i}^\top }{\norm{\b a_{i}}^2 + \lambda} = \f {\mat{A}^\top  \mat A}{\norm{\mat A}_{\rm F}^2 + n \lambda}.$$
Observe
\begin{align*}
    \mu &= \lambda_{\rm min}^+(\overline{\b P}_\lambda)
     = 
    \lambda_{\rm min}^+\left (\f {\mat{A}^\top  \mat A}{\norm{\mat A}_{\rm F}^2 + n \lambda}\right) 
    = \f {\sigma_{\rm min}^2(\mat A)} { \norm{\mat A}_{\rm F}^2 + n \lambda}.
\end{align*}

For $\nu$, since we are considering the case where $\mat A$ is square and invertible, the pseudoinverses in the definition of $\nu$ are simply inverses. Let $\mat S = e_i^\top $ be the row vector so that $\mat S \mat A = \b a_i^\top$ is the $i$th row of $\mat A.$ Furthermore, let $f = \norm{\mat A}_{\rm F}^2 +n \lambda$, and let $f_i = \norm{\b a_i}^2 + \lambda$; thus, $f = \sum_{i = 1}^n f_i.$ Then,
\begin{align*}
    {\b P}_{\lambda, i} \overline{\b P}_\lambda^{\dagger}  \b P_{\lambda, i} &= 
    \f {\b a_i \b a_{i}^\top }{\norm{\b a_{i}}^2 + \lambda}
    \left( \f {\mat{A}^\top  \mat A}{\norm{\mat A}_{\rm F}^2 + n \lambda} \right )^{-1}
    \f {\b a_i \b a_{i}^\top }{\norm{\b a_{i}}^2 + \lambda} = \f {\b a_i \b a_{i}^\top }{f_i}
    \left( \f {\mat{A}^\top  \mat A}{f} \right )^{-1}
    \f {\b a_i \b a_{i}^\top }{f_i} \\ 
    &= \f {\mat A^\top  \mat S^\top  \mat S \mat A }{f_i}
    \left( \f {\mat{A}^\top  \mat A}{f} \right )^{-1}
    \f {\mat A^\top  \mat S^\top  \mat S \mat A }{f_i} = \f f {f_i^2} \mat A^\top  \mat S^\top  \mat S \mat A (\mat A^\top  \mat A)^{-1} \mat A^\top  \mat S^\top  \mat S \mat A \\ 
    &= \f f{ f_i^2} \mat A^\top  \mat S^\top  \mat S \mat S^\top  \mat S \mat A = \f f{f_i^2} \b a_i \b a_i^\top .
\end{align*}
Therefore,
\begin{align*}
    \nu &= \lambda_{\rm max}(\m E[( \overline{\b P}_\lambda^{\dagger/2} \b P_{\lambda, i} \overline {\b P}_\lambda^{\dagger/2})^2]) = \lambda_{\rm max}( \overline{\b P}_\lambda^{-1/2} \m E [  {\b P}_{\lambda, i} \overline{\b P}_\lambda^{-1}  \b P_{\lambda, i} ] \overline{\b P}_\lambda^{-1/2} ) \\ 
    &= \lambda_{\rm max} \left (\overline{\b P}_\lambda^{-1/2} \m E\left [ \f f {f_i^2} \b a_i \b a_i^\top  \right] \overline{\b P}_\lambda^{-1/2} \right) = \lambda_{\rm max} \left( \overline{\b P}_{\lambda}^{-1/2} \cdot  \sum_{i = 1}^n \f {f_i} f \f f {f_i^2} \b a_i \b a_i^\top  \cdot \overline{\b P}_\lambda^{-1/2} \right)  \\ 
    &= \lambda_{\rm max} \left( \overline{\b P}_{\lambda}^{-1/2} \cdot  \sum_{i = 1}^n \f {\b a_i \b a_i^\top } {f_i} \cdot \overline{\b P}_\lambda^{-1/2} \right). 
\end{align*}
Now, recall that when $\mat{X}$ and $\mat{Y}$ are positive semi-definite matrices, then $c_1 \mat{X} + c_2 \mat{Y} \preceq \mat{X} + \mat{Y}$ for any $0 \le c_1, c_2 \le 1.$ Also, if $\mat{Z}$ is a symmetric matrix, then $\mat{X} \preceq \mat{Y}$ implies $\mat{ZXZ} \preceq \mat{ZYZ}$. Combining these facts, we see that 
$$ \overline{\b P}_{\lambda}^{-1/2} \cdot  \sum_{i = 1}^n \f {\b a_i \b a_i^\top } {f_i} \cdot \overline{\b P}_\lambda^{-1/2} \preceq 
\overline{\b P}_{\lambda}^{-1/2} \cdot  \sum_{i = 1}^n \f {\b a_i \b a_i^\top } { \min_j f_j} \cdot \overline{\b P}_\lambda^{-1/2} = \f 1 {\min_j f_j} \overline{\b P}_{\lambda}^{-1/2} \cdot  \mat A^\top  \mat A \cdot \overline{\b P}_\lambda^{-1/2} .$$
So, 
\begin{align*}
    \nu  &= \lambda_{\rm max} \left( \overline{\b P}_{\lambda}^{-1/2} \cdot  \sum_{i = 1}^n \f {\b a_i \b a_i^\top } {f_i} \cdot \overline{\b P}_\lambda^{-1/2} \right) \le \lambda_{\rm max} \left(  \f 1 {\min_j f_j} \overline{\b P}_{\lambda}^{-1/2} \cdot \mat A^\top  \mat A \cdot \overline{\b P}_\lambda^{-1/2}  \right ) \\ 
    &= \f 1 {\min_j f_j} \lambda_{\rm max} \left( f \cdot (\mat A^\top  \mat A)^{-1/2} \cdot \mat A^\top  \mat A \cdot (\mat A^\top  \mat A)^{-1/2} \right) = \f f { \min_j f_j}.
\end{align*}
\end{proof}

\bibliographystyle{siamplain}
\bibliography{stability}

\end{document}